\newcommand{\p}{\mathbb{P}}
\newcommand{\R}{\mathbb{R}}
\newcommand{\Q}{\mathbb{Q}}
\newcommand{\C}{\mathbb{C}}
\newcommand{\s}{\sigma}
\newcommand{\T}{\Tilde{T}}
\newcommand{\X}{\mathscr{X}}
\theoremstyle{plain} 
\newtheorem{thm}{Theorem}[section] 
\newtheorem{cor}[thm]{Corollary} 
\newtheorem{lem}[thm]{Lemma} 
\newtheorem{prop}[thm]{Proposition}
\newtheorem{conj}[thm]{Conjecture} 
\theoremstyle{definition} 
\newtheorem{defn}[thm]{Definition}
\newtheorem{exmp}[thm]{Example}
\theoremstyle{remark} 
\newtheorem{rmk}[thm]{Remark}
\begin{document}

\title[The Hodge conjecture for powers of K3 surfaces of Picard number 16]{The Hodge conjecture for powers of K3 surfaces of Picard number 16}  
\author[M. Varesco]{Mauro Varesco}
\address{Mathematisches Institut, Universit\"at Bonn, Endenicher Allee 60, 53115 Bonn, Germany}
\email{varesco@math.uni-bonn.de}

\begin{abstract}
We study the Hodge conjecture for powers of K3 surfaces and show that if the Kuga--Satake correspondence is algebraic for a family of K3 surfaces of generic Picard number $16$, then the Hodge conjecture holds for all powers of any K3 surface in that family.
\end{abstract}
\maketitle

\section*{Introduction}
\subsection{The determinant and the exceptional Hodge classes}
Let $X$ be a K3 surface, and denote by $T(X)$ its transcendental lattice with its induced Hodge structure of weight two. In the study of the Hodge conjecture for powers of $X$ a central role is played by the endomorphism field $E$ of $T(X)$. If $E$ is a CM field, then the Hodge conjecture for all powers of $X$ is known to hold: Buskin \cite{buskin2019every} and, again, Huybrechts \cite{huybrechts2019motives}, using derived categories, prove the Hodge conjecture for the square of these K3 surfaces. Ram{\'o}n--Mar{\'\i} \cite{mari2008hodge} then shows that for K3 surfaces with a CM endomorphism field, the Hodge conjecture for the square implies the Hodge conjecture for all powers of these K3 surfaces. On the other hand, if $E$ is a totally real field, the situation is more difficult. Unlike the CM case, the algebra of Hodge classes in $\bigotimes^\bullet T(X)$ is never generated by degree-two elements: The determinant of the transcendental lattice $\det T(X)\in T(X)^{\otimes \dim T(X)}$ gives an example of an \textit{exceptional} Hodge class, i.e., a Hodge class in the tensor algebra of $T(X)$ that cannot be expressed in terms of Hodge classes in $T(X)^{\otimes 2}$. In particular, we see that it is not sufficient to prove the Hodge conjecture for the square of these surfaces. Moreover, if the endomorphism field of $X$ is not $\Q$, the determinant is not the unique exceptional Hodge class. Indeed, we prove the following:
\begin{thm}[Theorem \ref{generators in the case of totally real K3}]
Let $X$ be a K3 surface with  totally real endomorphism field $E$. Then, any Hodge class in $\bigotimes^{\bullet}T(X)$ can be expressed in terms of Hodge classes of degree two and the exceptional Hodge classes in $T(X)^{\otimes r},$ where $r\coloneqq \dim_E T(X)$.
\end{thm}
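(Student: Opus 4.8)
The plan is to convert the statement into a problem in classical invariant theory by means of the Mumford--Tate formalism, and then to read off the generators from the first fundamental theorem for the special orthogonal group. First I would recall that the Hodge classes in $\bigotimes^{\bullet}T(X)$ are exactly the invariants of the Hodge group $\operatorname{Hdg}(T(X))$: a rational class of type $(n,n)$ in $T(X)^{\otimes n}$ is Hodge if and only if it is fixed by $\operatorname{Hdg}(T(X))$ acting on $\bigotimes^{\bullet}T(X)_{\Q}$. The essential input is Zarhin's computation of this group in the totally real case: writing $V\coloneqq T(X)_{\Q}$ as an $E$-vector space of dimension $r$ equipped with its $E$-bilinear form $\psi$, one has $\operatorname{Hdg}(T(X))=\operatorname{Res}_{E/\Q}\operatorname{SO}(V,\psi)$. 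The task thus becomes: describe the algebra of invariants of $G\coloneqq\operatorname{Res}_{E/\Q}\operatorname{SO}(V,\psi)$ on $\bigotimes^{\bullet}V$, and identify its generators with Hodge classes of degree $2$ and degree $r$.

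Next I would base-change to $\overline{\Q}$, using that for a $\Q$-group the invariants over $\Q$ are the $\operatorname{Gal}(\overline{\Q}/\Q)$-invariants of the invariants over $\overline{\Q}$. Let $\sigma_{1},\dots,\sigma_{d}\colon E\hookrightarrow\overline{\Q}$ be the embeddings, $d=[E:\Q]$, and set $W_{i}\coloneqq V\otimes_{E,\sigma_{i}}\overline{\Q}$, a $\overline{\Q}$-space of dimension $r$ with nondegenerate symmetric form $\psi_{i}$. Then $V\otimes_{\Q}\overline{\Q}=\bigoplus_{i=1}^{d}W_{i}$ and $G_{\overline{\Q}}=\prod_{i=1}^{d}\operatorname{SO}(W_{i},\psi_{i})$, the $i$-th factor acting only on $W_{i}$. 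Decomposing each tensor power $\bigl(\bigoplus_{i}W_{i}\bigr)^{\otimes n}$ according to which summand each slot is drawn from, and using that the product group acts factorwise together with $(A\otimes B)^{G_{1}\times G_{2}}=A^{G_{1}}\otimes B^{G_{2}}$, the $G_{\overline{\Q}}$-invariants are spanned, up to permutation of the tensor factors, by tensor products of invariants of the individual $\operatorname{SO}(W_{i})$ on $\bigotimes^{\bullet}W_{i}$. By the first fundamental theorem of invariant theory for the special orthogonal group in dimension $r=\dim W_{i}$, the latter are generated, under tensor product and permutation of factors, by the metric tensor $g_{i}\in W_{i}^{\otimes 2}$ and the volume (determinant) tensor $\omega_{i}\in W_{i}^{\otimes r}$.

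It then remains to descend these generators to $\Q$ and to recognize them. I would check that the $\overline{\Q}$-span of $g_{1},\dots,g_{d}$ is defined over $\Q$ and agrees with the complexification of the space of degree-two Hodge classes: these are the $E$-multiples $\operatorname{Tr}_{E/\Q}(\alpha\psi)$, $\alpha\in E$, whose image under $\sigma_{i}$ recovers $\psi_{i}$ and hence $g_{i}$ (that there are no further degree-two invariants is precisely the case $n=2$ of the computation above). Dually, the $\overline{\Q}$-span of $\omega_{1},\dots,\omega_{d}$ is an eigenspace decomposition for the $E$-action, it is permuted by $\operatorname{Gal}(\overline{\Q}/\Q)$, and its $\Q$-rational part is an $E$-line that is exactly the space of exceptional Hodge classes in $T(X)^{\otimes r}$ (the $E$-determinant $\det_{E}T(X)$ and its $E$-multiples, whose norm $\prod_{i}\omega_{i}$ recovers the full determinant $\det T(X)$ of the introduction). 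Since every $G_{\overline{\Q}}$-invariant is a $\overline{\Q}$-combination of tensor monomials in the $g_{i}$ and $\omega_{i}$, passing to $\operatorname{Gal}(\overline{\Q}/\Q)$-invariants shows that every Hodge class is expressed in terms of degree-two Hodge classes and these degree-$r$ exceptional ones.

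The main obstacle is precisely this last descent: the first fundamental theorem supplies generators only over $\overline{\Q}$, where $G$ splits as a product and the individual volume forms $\omega_{i}$ become available, so I must verify that an arbitrary $\Q$-rational (hence Hodge) invariant, a priori merely a $\overline{\Q}$-combination of tensor monomials in the $g_{i}$ and $\omega_{i}$, can be rewritten as a $\Q$-combination of tensor monomials in genuinely $\Q$-rational degree-two classes and $\Q$-rational exceptional degree-$r$ classes. This requires controlling how $\operatorname{Gal}(\overline{\Q}/\Q)$ permutes the factors $W_{i}$ (equivalently the embeddings $\sigma_{i}$) compatibly with the $E$-module structure, and confirming that the Galois-averages of the $\omega_{i}$ land in $T(X)^{\otimes r}$ and span exactly the exceptional part; a secondary point is to fix the scaling ambiguity of each volume form so that its Galois orbit is well defined.
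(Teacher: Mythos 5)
Your proposal follows essentially the same route as the paper: Hodge classes as $\mathrm{Hdg}(X)$-invariants, Zarhin's identification $\mathrm{Hdg}(X)=\mathrm{Res}_{E/\Q}\mathrm{SO}(T(X),\varphi)$, the eigenspace decomposition into the $V_\sigma$ after extending scalars, the first fundamental theorem for $\mathrm{SO}$ giving the degree-two invariants and the determinants $\det V_\sigma$ as generators, and the identification of the rational span of the $\det V_\sigma$ with $\bigwedge^r_E T(X)$ via the trace construction of Deligne's Lemma 4.3. The descent step you flag as the main obstacle is handled in the paper by working over $\C$ and invoking the isomorphism $(\bigotimes^{\bullet}T(X))^{\mathrm{Hdg}(X)}\otimes_\Q\C\simeq(\bigotimes^{\bullet}T(X)_\C)^{\mathrm{Hdg}(X)(\C)}$ together with Lemma \ref{Lem: extension of scalars}, which lets one choose rational homogeneous generators of the same degrees, so your Galois-theoretic formulation is an equivalent (slightly more hands-on) way of closing the same gap.
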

In particular, one sees that $\det T(X)$ decomposes as tensor product of exceptional Hodge classes of lower degree. To see what these exceptional classes are, consider the decomposition $T(X)_\C=\bigoplus_\s V_\s$ of the complex vector space $T(X)_\C$ into eigenspaces  for the action of the field $E$, where the direct sum runs over all embeddings $\sigma\colon E \longhookrightarrow \C$. The classes $\det V_\sigma\in (T(X)_\C)^{\otimes r}$ are of type $(r,r)$, but are not rational. However, it can be shown that the space
$\langle \det V_\s\rangle_{\s}$ is indeed rational, i.e., spanned by Hodge classes. These are the exceptional Hodge classes appearing in the theorem. In particular, to prove the Hodge conjecture for all powers of a K3 surface $X$ with a totally real endomorphism field, one needs to show the Hodge conjecture for the square $X^2$, i.e., prove that all classes in $\mathrm{End}_{\mathrm{Hdg}(X)}(T(X))$ are algebraic, and then show that one of the exceptional classes in $T(X)^{\otimes r}$ is algebraic on $X^r$. Indeed, if one of these exceptional classes is algebraic, then, applying the endomorphisms in $\mathrm{End}_{\mathrm{Hdg}(X)}(T(X))$, we see that all of them are algebraic.

\smallskip

The second implication of our theorem is the fact that the Hodge conjecture does not specialize in families: In a family of K3 surfaces the dimension of the endomorphism field may jump and the dimension of the transcendental lattice may go down, therefore, the Hodge conjecture for powers of the general member does not imply the Hodge conjecture for powers of all other fibres. However, we prove in Proposition \ref{Prop.: determinant is algebraic is a closed property in families} that the algebraicity of $\det T(X)$ specializes also when the dimension of $T(X)$ drops. A similar statement can be proven for the exceptional Hodge classes. This allows us to conclude that the Hodge conjecture specializes in families if the endomorphism field of the special fibre is equal to the endomorphism field of the general
fibre.

\subsection{The main theorem}
A possible approach to tackle the Hodge conjecture for powers of K3 surfaces is via the Kuga--Satake construction, which gives a correspondence between the K3 surface and an abelian variety. When this correspondence is known to be algebraic, it is possible to produce algebraic cycles on powers of the K3 surface from algebraic cycles on powers of the abelian variety. For example, this shows the Hodge conjecture for powers of K3 surfaces that are Kummer surfaces: By Morrison \cite{morrison1984k3}, the Kuga--Satake variety of a Kummer surface is a power of the starting abelian surface, and by Ribet \cite{ribet1983hodge}, the Hodge conjecture holds for powers of abelian surfaces. This same approach has been employed by Schlickewei \cite{schlickewei2010hodge} to prove the Hodge conjecture for the square of K3 surfaces which are double covers of the projective plane branched along six lines using a result by Paranjape \cite{paranjape}.
Inspired by these techniques, we analyze in depth the Kuga--Satake correspondence for K3 surfaces of Picard number $16$, and we prove the following:
\begin{thm}[Theorem \ref{main thm}]
\label{main thm intro}
Let $\mathscr{X}\longrightarrow S$ be a four-dimensional family of K3 surfaces whose general fibre is of Picard number $16$ with an isometry
\[
T(\mathscr{X}_s)\simeq U_\Q^2\oplus \langle a\rangle\oplus\langle b\rangle,
\]
for some negative integers $a$ and $b$. If the Kuga--Satake correspondence is algebraic for the fibres of this family, then the Hodge conjecture holds for all powers of every K3 surface in this family.
\end{thm}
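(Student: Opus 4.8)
The plan is to apply the structure theorem (Theorem~\ref{generators in the case of totally real K3}) to reduce the Hodge conjecture for all powers to finitely many distinguished classes, and then to realise those classes through the Kuga--Satake correspondence. I begin with the general fibre $X=\X_s$. Here $\operatorname{rank}T(X)=22-16=6$, and since a general period point has Mumford--Tate group the full special orthogonal group of $T(X)$, the endomorphism field is $E=\Q$, which is trivially totally real, with $r=\dim_E T(X)=6$. By Theorem~\ref{generators in the case of totally real K3} the Hodge classes in $\bigotimes^\bullet T(X)$ are then generated by the degree-two part together with the single exceptional class $\det T(X)\in T(X)^{\otimes 6}$. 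Hence it suffices to show that (i) every class in $\mathrm{End}_{\mathrm{Hdg}(X)}(T(X))$ is algebraic on $X^2$, and (ii) $\det T(X)$ is algebraic on $X^6$. Part (i) is immediate for the general fibre, because $\mathrm{End}_{\mathrm{Hdg}(X)}(T(X))=\Q$ is spanned by the identity, which under the polarisation is the inverse intersection form, a K\"unneth component of the diagonal and hence algebraic; so the whole difficulty lies in (ii).

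To attack (ii) I would exploit the special geometry of Picard number $16$. Because $T(X)$ has rank $6$ and signature $(2,4)$, the exceptional isomorphism $\mathrm{Spin}(2,4)\cong \mathrm{SU}(2,2)$ identifies $T(X)$, up to Tate twist, with a sub-Hodge structure of $\wedge^2 H^1(B)$ for an abelian fourfold $B$, and the Kuga--Satake variety is isogenous to a power $B^m$. The even Clifford algebra $C^+(T(X))$, whose centre is the real quadratic algebra $\Q(\sqrt{ab})$, acts on $B$ and equips it with real multiplication. Under this dictionary the exceptional class $\det T(X)$ corresponds to an exceptional Hodge class on a power of $B$, of the same determinant-of-eigenspace nature as the classes $\langle\det V_\s\rangle_\s$. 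Since we are assuming the Kuga--Satake correspondence to be algebraic, it then suffices to establish that this class is algebraic on the abelian side, after which transporting it back along the algebraic Kuga--Satake cycle produces (ii).

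The hard part will be the passage on the abelian side: matching $\det T(X)$ precisely with the exceptional class on $B^m$ and checking that the algebraic Kuga--Satake correspondence carries algebraicity across this matching. I expect the algebraicity of the abelian class to come from the extra endomorphisms furnished by $C^+(T(X))$: when these force $B$ to be isogenous to a self-product of abelian surfaces it follows from Ribet's theorem \cite{ribet1983hodge}, and otherwise one must invoke the known algebraicity of the corresponding (Weil-type) exceptional classes on abelian fourfolds of the relevant discriminant. Once $\det T(X)$ is shown to be algebraic, the proof for the general fibre is complete.

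It remains to promote the conclusion to every fibre, where $T(X)$ may drop in rank and the endomorphism field may jump. If it jumps to a CM field, the Hodge conjecture for all powers is already known by Buskin \cite{buskin2019every} and Huybrechts \cite{huybrechts2019motives} together with Ram\'on--Mar\'\i \cite{mari2008hodge}. If it remains $\Q$, then (i) is again automatic and the algebraicity of $\det T(X)$ specialises by Proposition~\ref{Prop.: determinant is algebraic is a closed property in families}, so the structure theorem concludes. In the last case, where the field specialises to a larger totally real field, I would rerun the reduction above for that fibre: part (i) now follows because $\mathrm{End}_{\mathrm{Hdg}}(T(X))$ embeds, through the Kuga--Satake correspondence (algebraic for every fibre by hypothesis), into $\mathrm{End}(B)\otimes\Q$, whose classes are algebraic; the finitely many exceptional classes are produced from the abelian side exactly as before, and applying the endomorphisms of $T(X)$ — algebraic by (i) — spreads algebraicity over the whole space $\langle\det V_\s\rangle_\s$. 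Assembling the three cases yields the Hodge conjecture for all powers of every K3 surface in the family.
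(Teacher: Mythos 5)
Your overall strategy --- reduce to the exceptional classes via Theorem \ref{generators in the case of totally real K3}, push them through the Kuga--Satake correspondence, and handle the special fibres by CM results and Proposition \ref{Prop.: determinant is algebraic is a closed property in families} --- is the same as the paper's, and your case division of the fibres essentially matches the proof of Theorem \ref{main thm}. But the step you yourself flag as ``the hard part'' is exactly where the paper's real work lies, and you leave it unargued. Knowing that the Weil classes on the abelian fourfold $A$ are algebraic (Markman, Theorem \ref{Thm.: Markman}) does not by itself give algebraicity of the image of $\det T(X)$ under the Kuga--Satake correspondence: that image lives on a high power of $\mathrm{KS}(X)\sim A^4$, where the relevant Hodge classes are \emph{realizations} of the Weil classes attached to the various partitions of $2n$ (Remark \ref{rmk.: all possible realization of the exceptional classes}), and one must prove that algebraicity of the Weil classes on $A$ propagates to all of these. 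This is the content of Corollary \ref{Cor: invariants under SL extended version}, Lemma \ref{Lem.: realizations of exceptional classes are related} and Theorem \ref{Thm.: H.c. for powers of general ab var of weil type} (the Hodge conjecture for $A$ implies it for all powers $A^k$); the paper stresses in Remark \ref{rmk.: comparison with Abdulali} that precisely this point was missing even from Abdulali's published argument. Once one has the Hodge conjecture for all powers of $\mathrm{KS}(X)$, Lemma \ref{Lem.: Kuga-Satake corresponedence and Hodge conjecture} transports \emph{every} Hodge class in $\bigotimes^{\bullet}T(X)$ back at once, so no explicit matching of $\det T(X)$ with a particular abelian class is needed: the matching problem you worry about is circumvented rather than solved.

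Two smaller points. First, for $a,b<0$ the centre of $\mathrm{Cl}^+(T(X))$ is $\Q(\sqrt{-ab})$, an \emph{imaginary} quadratic field, not the real quadratic $\Q(\sqrt{ab})$; it is exactly this CM action that makes the Kuga--Satake factor an abelian fourfold of Weil type with discriminant one (Theorem \ref{thm: Lombardo KS and rank 6}), and your appeal to ``real multiplication'' on $B$ is a slip. Second, in the fibre where $E$ becomes totally real of degree two the relevant abelian fourfold acquires definite quaternionic multiplication (\cite[Prop.\ 5.7]{van2000kuga}), and one needs the quaternionic analogue Theorem \ref{Thm.: Hodge conjecture abelian variety of Weil type with definite quaternionic multiplication} rather than a rerun of the $E=\Q$ argument; your sketch of that case should be replaced by this input.
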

At the time of us writing this article, there are two families of K3 surfaces which satisfy the hypotheses of Theorem \ref{main thm intro}: The family of double covers of $\p^2$ branched along six lines studied in \cite{schlickewei2010hodge} and the family of K3 surfaces which are desingularization of singular K3 surfaces in $\p^4$ with $15$ simple nodes studied in \cite{ingalls2022explicit}. In particular, Theorem \ref{main thm intro} proves the Hodge conjecture for all powers of the K3 surfaces in these families. 
\subsection{Outline of the proof}
The first step of the proof is a study of the Hodge conjecture for powers of the Kuga--Satake varieties of the K3 surfaces of Theorem \ref{main thm intro}. By Lombardo \cite{lombardo01kugasatake}, these abelian varieties are powers of abelian fourfolds of Weil type with discriminant one. We correct in Section
\ref{Sec. Ab. of Weil type} the proof of the following theorem. \begin{thm}[Theorem \ref{Thm.: H.c. for powers of general ab var of weil type}]\cite{abdulali1999abelianIII}
\label{thm. abdulali intro}
Let $A$ be a general abelian variety of Weil type. Then, the Hodge conjecture for $A$ implies the Hodge conjecture for all powers $A^k$.
\end{thm}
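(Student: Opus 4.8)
The plan is to deduce the Hodge conjecture for every power $A^k$ from the hypothesis by combining a description of the Hodge group of $A$ with the invariant theory of the special linear group. Write $V=H^1(A,\Q)$; since $A$ is of Weil type, $V$ is a $2n$-dimensional vector space over the imaginary quadratic field $K$ acting on $A$, carrying the $K$-Hermitian form $\phi$ induced by a polarization. Because the Hodge group is determined by the weight-one Hodge structure and does not change under taking powers, one has $\mathrm{Hg}(A^k)=\mathrm{Hg}(A)$, so the Hodge classes on $A^k$ are exactly the $\mathrm{Hg}(A)$-invariants in $\bigwedge^{\bullet}(V^{\oplus k})$. The first step is therefore to pin down $\mathrm{Hg}(A)$: it is precisely here that genericity is used, since for a \emph{general} abelian variety of Weil type the Hodge group is as large as possible, namely the special unitary group $\mathrm{SU}(V,\phi)$, which over $\C$ becomes $\mathrm{SL}_{2n}(\C)$ acting on $V\otimes_{\Q}\C=W\oplus\overline{W}$ with $W$ the standard representation.

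Next I would apply the First Fundamental Theorem of invariant theory for $\mathrm{SL}_{2n}$ to arbitrarily many copies of $W$ and $\overline{W}$: the invariants are generated by the contractions $W\otimes\overline{W}\to\C$ together with the two determinants $\bigwedge^{2n}W\to\C$ and $\bigwedge^{2n}\overline{W}\to\C$. Translated into cohomology, this says that the algebra of Hodge classes on all powers of $A$ is generated by two kinds of classes: the contraction classes, which consist of divisor classes together with the endomorphism classes in $H^1\otimes H^1$ coming from the action of $\mathrm{End}(A)\otimes\Q\supseteq K$, and the Weil classes coming from the determinants. The divisor classes are algebraic by the Lefschetz $(1,1)$-theorem; the endomorphism classes are algebraic because they are K\"unneth components of graphs of elements of $\mathrm{End}(A^k)\otimes\Q=M_k(\mathrm{End}(A)\otimes\Q)$, these K\"unneth components being algebraic for abelian varieties by Lieberman; and the Weil classes on $A$ itself are algebraic by the standing hypothesis that the Hodge conjecture holds for $A$.

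It then remains to propagate the algebraicity of the Weil classes from $A$ to every power $A^k$, and this is the step I expect to be the main obstacle --- and, presumably, the point at which the original argument needs correction. The Weil-type Hodge classes on $A^k$ live in $\bigwedge^{2n}_{K}(V^{\oplus k})$; the \emph{pure} Weil classes $p_i^{*}w$ pulled back along the projections $p_i\colon A^k\to A$ are algebraic by the previous paragraph, while the mixed determinants predicted by the First Fundamental Theorem are a priori new. The crux is to show, through a precise analysis of which mixed determinants actually occur among the $\mathrm{SL}_{2n}$-invariants of $\bigwedge^{\bullet}(W^{\oplus k})$, that the module generated by the classes $p_i^{*}w$ under the algebraic correspondences coming from $M_k(K)\subseteq\mathrm{End}(A^k)\otimes\Q$ already exhausts the full space of Weil-type Hodge classes on $A^k$; together with the algebraicity of the K\"unneth projectors this yields algebraicity of every Weil class on $A^k$.

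Finally, since algebraic cycles are closed under the operations used above --- cup product, pullback and pushforward along the structure morphisms of the powers, and the correspondence action of endomorphisms --- and since every generator produced by the First Fundamental Theorem has been shown to be algebraic, I would conclude that all Hodge classes on all powers $A^k$ are algebraic, which is the Hodge conjecture for every $A^k$.
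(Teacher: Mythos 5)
Your setup coincides with the paper's: identify $\mathrm{Hdg}(A)$ with the special unitary group, complexify to $\mathrm{SL}(2n,\C)$ acting on $W\oplus W^*$, and invoke the first fundamental theorem so that the Hodge classes on all powers of $A$ are generated by contractions (divisor and endomorphism classes) and determinant-type classes (Weil classes). The genuine content of the theorem, however, is exactly the step you flag as ``the main obstacle'' and then do not carry out: showing that every \emph{mixed} determinant, i.e.\ every realization of a Weil class inside $\bigwedge^{i_1}H^1(A,\Q)\otimes\cdots\otimes\bigwedge^{i_k}H^1(A,\Q)$ for a $k$-partition $(i_1,\dots,i_k)$ of $2n$, is algebraic once the Weil classes on $A$ itself are. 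Asserting that the correspondence module generated by the pure pullbacks $p_i^{*}w$ ``already exhausts'' these classes is a restatement of what must be proved, not a proof; as written, your argument has a hole precisely at the point where the cited reference is being corrected.

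The paper closes this gap in two concrete moves. First (Lemma \ref{Lem.: realizations of exceptional classes are related}), every realization $\alpha_I$ on $A^k$ is the pullback of the single fully tensored realization $\widetilde\alpha\in H^1(A,\Q)^{\otimes 2n}\subseteq H^{2n}(A^{2n},\Q)$ along the algebraic map $\Delta_{i_1}\times\cdots\times\Delta_{i_k}\colon A^{k}\to A^{2n}$, so it suffices to treat $\widetilde\alpha$. Second, $\widetilde\alpha$ is extracted by inclusion--exclusion from pullbacks of $\alpha$ under the summation maps $\Sigma_i\colon A^i\to A$:
\[
\widetilde\beta=\Sigma_{2n}^{*}(\beta)-\sum_{\emptyset\neq J\subsetneq\{1,\dots,2n\}}(-1)^{|J|-1}p_J^{*}\bigl(\Sigma_{|J|}^{*}(\beta)\bigr),
\]
and every term on the right is algebraic when $\beta=\alpha$ is assumed algebraic on $A$. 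Your idea of letting correspondences in $M_k(K)$ act on pullbacks of $w$ is in the right spirit --- the summation maps are graphs of homomorphisms between powers of $A$, and varying the coefficients amounts to a polarization argument equivalent to the identity above --- but you would still need to exhibit such an identity to isolate each mixed realization; without it the proof is incomplete.
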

In a recent preprint, Milne \cite{milne2021tate} gives an alternative proof of Theorem \ref{thm. abdulali intro} using an equality between the Mumford--Tate group and the algebraic group which preserves the algebraic classes on the powers of the abelian variety\footnote{Thanks to Bert van Geemen for the reference.}. Our proof has the advantage of giving a concrete description of all Hodge classes on the powers of a general abelian variety of Weil type. For example this allows us to show that if the Kuga--Satake correspondence for the general K3 surface of Picard number $16$ with transcendental lattice as in Theorem \ref{main thm intro} is algebraic, then the Hodge conjecture holds for the resulting abelian fourfold of Weil type with discriminant one. Indeed we prove the following:
\begin{prop}[Prop. \ref{prop. KSH implies H conj for ab 4}]
\label{Prop: hodge conj for the resulting ab 4fold intro}
Let $X$ be a general K3 surface of Picard number $16$ as in Theorem \ref{main thm intro}, and denote by $A$ the abelian fourfold of Weil type with discriminant one appearing as simple factor of the Kuga--Satake variety of $X$. Then, if the Kuga--Satake correspondence is algebraic for $X$, the Weil classes on $A$ are algebraic. Thus, the Hodge conjecture holds for $A$ and, hence, for all powers $A^k$.
\end{prop}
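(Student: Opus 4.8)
The plan is to transport algebraicity from the K3 side to $A$ through the (assumed algebraic) Kuga--Satake correspondence, and then to reduce the full Hodge conjecture for $A$ and all its powers to the algebraicity of the Weil classes. First I would make the Kuga--Satake package explicit for the lattice $T(X)\simeq U_\Q^2\oplus\langle a\rangle\oplus\langle b\rangle$. Following Lombardo \cite{lombardo01kugasatake}, the even Clifford algebra $C^+\coloneqq C^+(T(X))$ has centre the imaginary quadratic field $K=\Q(\sqrt{-ab})$ (note $a,b<0$, so $-ab<0$), and $H^1(\mathrm{KS}(X))\cong C^+$ as a weight-one Hodge structure on which $K$ acts; this action is exactly the $K$-structure realising the simple factor $A$ as an abelian fourfold of Weil type, and a discriminant computation on the induced Hermitian form gives discriminant one. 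I would then record the Kuga--Satake embedding of Hodge structures $\kappa\colon T(X)\otimes T(X)\hookrightarrow \mathrm{End}(H^1(\mathrm{KS}(X)))$, $v\otimes w\mapsto (\,\cdot\,)\cdot vw$, whose class is algebraic by hypothesis.

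Next I would pin down the Hodge classes on $A$. Decomposing $H^1(A)_\C=H_+\oplus H_-$ into the two $K$-eigenspaces, the Weil-type condition reads $\dim(H^{1,0}\cap H_\pm)=\dim(H^{0,1}\cap H_\pm)=2$, so that $\det_\C H_+=\wedge^4_\C H_+$ and $\det_\C H_-$ are Hodge classes of type $(2,2)$ in $\wedge^4 H^1(A)$ spanning the two-dimensional rational space of Weil classes. These $\det_\C H_\pm$ are the exact analogues on $A$ of the exceptional classes $\det V_\s$ of the Introduction. For a general such $A$ all remaining Hodge classes are generated by the polarization and are therefore algebraic, so the Hodge conjecture for $A$ is equivalent to the algebraicity of the Weil classes.

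The heart of the argument is then to exhibit the Weil classes inside the $\Q$-subalgebra of $H^\bullet(\mathrm{KS}(X))$ generated by three families of algebraic classes: the image of $\kappa$ (algebraic by the Kuga--Satake hypothesis), the polarization, and the action of $K\subset\mathrm{End}(A)\otimes\Q$ (algebraic, since endomorphisms of abelian varieties are). Concretely, fixing a $\Q$-basis $e_1,\dots,e_6$ of $T(X)$, I would compute in the Clifford algebra a suitable top product of the $\kappa(e_i\otimes e_j)$ paired against the polarization: the Clifford relations identify such a top wedge with the centre $K$, and after splitting into $K$-eigenspaces this product restricts on the factor $A$ to $\det_\C H_+ + \det_\C H_-$ up to a nonzero scalar. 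Since the left-hand side is algebraic, so is this rational combination, and applying the (algebraic) $K$-action separates the two summands; hence both Weil classes are algebraic. The main obstacle is precisely this computation: one must track how the determinant/spinor-norm class of $C^+$ descends from $\mathrm{KS}(X)\sim A^n$ to the simple factor $A$, and verify that the relevant combination of eigen-determinants is rational and is genuinely produced by $\kappa$ rather than absorbed into the isogeny factor. Here the discriminant-one hypothesis is what guarantees that the splitting is defined over $\Q$ and that this descent is nondegenerate.

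Finally, with every Hodge class on $A$ algebraic the Hodge conjecture holds for $A$, and Theorem \ref{thm. abdulali intro} upgrades this to the Hodge conjecture for every power $A^k$, completing the proof.
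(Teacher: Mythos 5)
Your overall scaffolding is right --- reduce the Hodge conjecture for $A$ to the Weil classes via Weil's computation of $H^{s,s}(A,\Q)$, produce those classes from the (assumed algebraic) Kuga--Satake correspondence together with the algebraic $K$-action, and then invoke Theorem \ref{Thm.: H.c. for powers of general ab var of weil type} for the powers. But the central step is not actually carried out: you propose to exhibit the Weil classes as ``a suitable top product of the $\kappa(e_i\otimes e_j)$ paired against the polarization'' and then yourself flag this Clifford-algebra computation as ``the main obstacle.'' That computation is precisely the content of the proposition, so as written the argument has a genuine gap rather than a proof. It is also not clear that the mechanism you propose is the right one: in the paper the Weil classes do not arise as high-degree products of the classes $\kappa(e_i\otimes e_j)$ at all.

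What fills the gap in the paper is Lombardo's isomorphism of Hodge structures $\bigwedge^2_K H^1(A,\Q)\simeq T(X)\oplus \phi(T(X))$, where $\phi$ is the (algebraic) action of $\sqrt{-d}$. One then computes the space of Hodge classes in $S=\bigl(\bigwedge^2_K H^1(A,\Q)\bigr)_\C^{\otimes 2}$ on $A^2$ in two ways: via $\bigwedge^2W\oplus\bigwedge^2W^*$ it is four-dimensional and contains two realizations of the Weil classes; via $T(X)\oplus\phi(T(X))$ a dimension count forces $\mathrm{End}_{\mathrm{Hdg}(X)}(T(X))=\Q$, so all four Hodge classes in $S$ are $\phi$-twists of the single class $\alpha\in T(X)\otimes T(X)$ coming from the diagonal of $X$, hence algebraic under the Kuga--Satake hypothesis. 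Pulling back along the diagonal $A\to A^2$ (as in Lemma \ref{Lem.: realizations of exceptional classes are related}) then gives the Weil classes on $A$ itself --- note that the paper works on $A^2$ with degree-$2$ pieces of $H^\bullet(A)$, not with a top-degree product on $\mathrm{KS}(X)$. If you want to salvage your route, you would need to either prove the Clifford-algebra identity you assert (including the descent from $\mathrm{KS}(X)\sim A^4$ to the simple factor and the rationality of the combination of eigen-determinants) or replace it by Lombardo's result as above.
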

The Hodge conjecture for general abelian fourfolds of Weil type  with discriminant one
has already been proven by Markman \cite{markman2022monodromy} using generalized Kummer varieties. Our approach is different but
no new case of the Hodge conjecture is established. Proposition \ref{Prop: hodge conj for the resulting ab 4fold intro} is included to highlight the strong link between 
algebraicity of the Kuga–-Satake correspondence and the Hodge conjecture for the Kuga--Satake variety.
Assuming the algebraicity of Kuga--Satake correspondence, the Hodge conjecture for powers of the Kuga--Satake variety implies the Hodge conjecture for the K3 surface. This allows us deduce the Hodge conjecture the general K3 surface of the family of Theorem \ref{main thm intro}. Similarly, we prove the Hodge conjecture for all powers of K3 surfaces of Picard number $16$ and a totally real endomorphism field of degree two. To establish the same for the K3 surfaces of higher Picard number with totally real endomorphism field, we rely on the aforementioned result stating that the Hodge conjecture specializes in families if the endomorphism field of the special fibre is equal to the endomorphism field of the general fibre. This last hypothesis is satisfied, as the endomorphism field of K3 surfaces of Picard number higher than $16$ is $\Q$ if it is not a CM field.

\section*{Acknowledgments}
I would like to thank my PhD.\ supervisor Daniel Huybrechts for suggesting me this topic and for his continuous advice and support. I am also grateful to Ben Moonen and Bert van Geemen for reviewing a preliminary version of this article. Our discussion of the role of $\det T(X)$ was influenced by Claire Voisin and Qizheng Yin. This research was funded by ERC Synergy Grant HyperK, Grant
agreement ID 854361.
\section{The Hodge conjecture and Kuga--Satake varieties}
\label{Section Hodge conjecture and Kuga-Satake}
In this section, we recall the Hodge conjecture in the special case of powers of K3 surfaces and review the construction of the Kuga--Satake varieties. For a complete introduction we refer to \cite{huybrechtsK3surfaces}.

\subsection{The Hodge conjecture}
Let $X$ be a smooth complex projective variety. For a non-negative integer $k$ denote by $H^{k,k}(X,\Q)\coloneqq H^{k,k}(X,\C)\cap H^{2k}(X,\Q)$ the set of Hodge classes of degree $k$.
\begin{conj}[Hodge conjecture for powers of K3 surfaces] Let $X$ be a K3 surface, and let $k$ and $n$ be positive integers. Then, the $\Q$-algebra of Hodge classes in $H^{k,k}(X^n,\Q)$ is generated by cohomology classes of algebraic cycles on $X^n$.
\end{conj}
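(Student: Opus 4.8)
The plan is to reduce the statement, through the K\"unneth formula, to a purely transcendental question, and then to attack that question through the endomorphism structure of the transcendental lattice. For a K3 surface one has a decomposition of rational Hodge structures $H^2(X,\Q)=\mathrm{NS}(X)_\Q\oplus T(X)$, in which $\mathrm{NS}(X)_\Q$ consists of algebraic classes. Writing $H^\bullet(X^n,\Q)=\bigotimes^n H^\bullet(X,\Q)$ and using that $H^0$, $H^4$ and the N\'eron--Severi part of $H^2$ are spanned by algebraic classes, every Hodge class on $X^n$ is a sum of external products in which each factor is either an algebraic class or a Hodge class lying in some tensor power $T(X)^{\otimes m}$. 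Since the K\"unneth projections and the projection $H^2\to T(X)$ are morphisms of Hodge structures, and since external product with algebraic cycles preserves algebraicity, I would first reduce the conjecture to the single assertion that every Hodge class in $\bigotimes^\bullet T(X)$ is algebraic on a suitable power of $X$.

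The next step invokes the dichotomy, due to Zarhin, that the endomorphism field $E\coloneqq\mathrm{End}_{\mathrm{Hdg}(X)}(T(X))$ is either a CM field or a totally real field. In the CM case I would simply cite the known results: Buskin and Huybrechts establish the Hodge conjecture for $X^2$, and Ram\'on--Mar\'\i deduces from it the conjecture for all powers, which closes this case. In the totally real case I would apply the generation theorem stated in the introduction (Theorem \ref{generators in the case of totally real K3}): every Hodge class in $\bigotimes^\bullet T(X)$ is generated by the Hodge classes of degree two, that is by $\mathrm{End}_{\mathrm{Hdg}(X)}(T(X))$, together with the exceptional Hodge classes spanning $\langle\det V_\s\rangle_\s$ inside $T(X)^{\otimes r}$, where $r\coloneqq\dim_E T(X)$. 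This leaves exactly two things to prove: that every element of $\mathrm{End}_{\mathrm{Hdg}(X)}(T(X))$ is algebraic, equivalently the Hodge conjecture for the square $X^2$, and that at least one exceptional class in $T(X)^{\otimes r}$ is algebraic on $X^r$; algebraicity of a single such class then propagates to the whole space $\langle\det V_\s\rangle_\s$ by acting with the now-algebraic endomorphisms.

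The main obstacle --- and the reason the statement is formulated as a conjecture rather than a theorem --- is precisely the algebraicity of these transcendental classes, which is not available for a general K3 surface by any direct construction of cycles. To supply the missing cycles I would pass through the Kuga--Satake construction, which attaches to $X$ an abelian variety $A$ and a Hodge correspondence realizing $T(X)$, up to Tate twist, inside $H^1(A)^{\otimes 2}$. If this correspondence is algebraic, then algebraic cycles on powers of $A$ --- for which the Hodge conjecture is known in a far wider range, and in our situation can be made explicit through the abelian fourfolds of Weil type with discriminant one that occur as factors of $A$ --- transport to algebraic cycles on powers of $X$, yielding both the algebraicity of $\mathrm{End}_{\mathrm{Hdg}(X)}(T(X))$ and of the exceptional classes. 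The crux is therefore the algebraicity of the Kuga--Satake correspondence, which remains open in general; the program above can only be carried out unconditionally for the families in which this input can be verified and the structure of the associated abelian varieties controlled.
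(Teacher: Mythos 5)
The statement you were asked to prove is labelled a \emph{conjecture} in the paper, and the paper contains no proof of it; it remains open. Your proposal is therefore not, and could not be, a complete proof, and to your credit you say so explicitly in the final sentence. What you have written is an accurate reconstruction of the paper's overall strategy: the K\"unneth reduction to Hodge classes in $\bigotimes^\bullet T(X)$ is exactly Lemma \ref{Lem.: Hodge conjecture and Transcendental lattice}; the CM/totally real dichotomy for $E=\mathrm{End}_{\mathrm{Hdg}(X)}(T(X))$ with the unconditional CM case via Buskin--Huybrechts and Ram\'on--Mar\'\i{} is Corollary \ref{Cor.: H.c. for K3 of CM type}; the reduction in the totally real case to degree-two classes plus the exceptional classes in $T(X)^{\otimes r}$ is Theorem \ref{generators in the case of totally real K3}; and the Kuga--Satake route through abelian fourfolds of Weil type is the content of Sections \ref{Sec. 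Ab. of Weil type} and \ref{Sec. Families of K3 of Pic 16}.

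The genuine gap, which you name yourself, is that in the totally real case neither required input is available unconditionally: the algebraicity of $\mathrm{End}_{\mathrm{Hdg}(X)}(T(X))$ (Buskin's theorem concerns Hodge \emph{isometries}, which does not cover the self-adjoint endomorphisms arising from a totally real $E\neq\Q$), and the algebraicity of a single exceptional class in $T(X)^{\otimes r}$, both hinge on the Kuga--Satake Hodge conjecture, which is open outside the special families of Theorem \ref{main thm}. So your text should be read as a correct proof \emph{program} matching the paper's, yielding the conjecture unconditionally only for CM endomorphism fields and conditionally (on the algebraicity of the Kuga--Satake correspondence) for the Picard-number-$16$ families; it does not establish the conjecture as stated, and neither does the paper.
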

As the K3 surface $X$ is projective by assumption, the intersection pairing on $X$ induces the direct sum decomposition
\[
H^2(X,\Q)=\mathrm{NS}(X)_\Q\oplus T(X),
\]
where $\mathrm{NS}(X)$ is the Néron--Severi group of $X$, and $T(X)$ is its transcendental lattice, i.e., the smallest rational sub-Hodge structure $T$ of $H^2(X,\Q)$ such that $T^{2,0}=H^{2,0}(X)$.
\begin{lem}
\label{Lem.: Hodge conjecture and Transcendental lattice}
The Hodge conjecture holds for all powers of a K3 surface $X$ if and only if all Hodge classes in the tensor algebra of $T(X)$ are algebraic.
\begin{proof}
Let $n$ and $k$ be positive integers. By K\"{u}nneth decomposition, we have that
\[
H^{2k}(X^n,\Q)\cong\bigoplus \left( T(X)^{\otimes a}\otimes \mathrm{NS}(X)_\Q^{\otimes b}\otimes H^0(X,\Q)^{\otimes c}\otimes H^4(X,\Q)^{\otimes d}\right),
\]
where the direct sum runs over all non-negative integers $a,b,c,$ and $d$ satisfying $2k=2a+2b+4d$ and $a+b+c+d=n$.
Since all elements in $\mathrm{NS}(X)_\Q,$ $H^0(X,\Q)$, and $H^4(X,\Q)$ are obviously algebraic, to prove the Hodge conjecture for the powers of $X$, it suffices to show that the Hodge classes in the algebra $\bigotimes^{\bullet}T(X)$ are algebraic.
\end{proof}
\end{lem}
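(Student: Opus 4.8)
The plan is to move everything to the level of rational Hodge structures and to reduce the Hodge conjecture for the powers $X^n$ to a statement about the tensor algebra of $T(X)$ by means of the K\"unneth formula together with the splitting $H^2(X,\Q)=\mathrm{NS}(X)_\Q\oplus T(X)$.

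First I would record the Hodge structure of a K3 surface: $H^0(X,\Q)$ and $H^4(X,\Q)$ are one-dimensional of type $(0,0)$ and $(2,2)$, the odd cohomology vanishes, and $H^2(X,\Q)$ carries the weight-two structure splitting as above. Applying K\"unneth to $H^{2k}(X^n,\Q)$ and dropping the vanishing odd factors, each summand is a tensor product of copies of $H^0$, $H^2$, and $H^4$; substituting the splitting of $H^2$ reproduces the decomposition displayed before the statement, indexed by $(a,b,c,d)$ with $2k=2a+2b+4d$ and $a+b+c+d=n$. The point that needs care is that both K\"unneth and the splitting $H^2=\mathrm{NS}_\Q\oplus T(X)$ are \emph{isomorphisms of Hodge structures}, so that the subspace of Hodge classes decomposes accordingly: a Hodge class in $H^{2k}(X^n,\Q)$ is a sum of Hodge classes in the individual summands $T(X)^{\otimes a}\otimes\mathrm{NS}(X)_\Q^{\otimes b}\otimes H^0(X,\Q)^{\otimes c}\otimes H^4(X,\Q)^{\otimes d}$.

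The reverse implication is then the substantive one. In each such summand the factors $\mathrm{NS}(X)_\Q$, $H^0(X,\Q)$, and $H^4(X,\Q)$ are spanned by algebraic classes---divisor classes, the fundamental class, and the class of a point. Hence a Hodge class in the summand is a sum of products of such fixed algebraic generators with a Hodge class in the remaining factor $T(X)^{\otimes a}$. Granting the hypothesis that every Hodge class in $\bigotimes^\bullet T(X)$ is algebraic, each summand is therefore a product of algebraic classes, hence algebraic, and summing over the K\"unneth components gives the Hodge conjecture for $X^n$.

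The forward implication is essentially formal: since $T(X)^{\otimes a}$ is a sub-Hodge structure of $H^2(X,\Q)^{\otimes a}\subseteq H^{2a}(X^a,\Q)$, any Hodge class in $T(X)^{\otimes a}$ is a Hodge class on $X^a$ and hence algebraic once the Hodge conjecture is assumed for all powers of $X$. I therefore expect no serious obstacle; the only genuine content is the bookkeeping of the previous paragraph, namely checking that taking Hodge classes is compatible with the K\"unneth and $\mathrm{NS}\oplus T$ decompositions and that the non-transcendental factors contribute only algebraic classes.
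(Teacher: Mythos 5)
Your argument is the same as the paper's: apply the K\"unneth decomposition together with the splitting $H^2(X,\Q)=\mathrm{NS}(X)_\Q\oplus T(X)$, observe that the factors $\mathrm{NS}(X)_\Q$, $H^0$, and $H^4$ are spanned by algebraic classes, and reduce to Hodge classes in $\bigotimes^\bullet T(X)$. Your write-up is slightly more careful than the paper's (you spell out that the decompositions are of Hodge structures and you address the easy forward implication explicitly), but the route is identical.
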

Let us recall the notion of Hodge group of $T(X)$, which we use to study the algebra of Hodge classes in $\bigotimes^{\bullet}T(X)$.
Let $V$ be a rational Hodge structure, and denote by $\rho\colon \C^*\longrightarrow \mathrm{GL}(V_\R)$ the morphism defining the Hodge structure on $V$. The \textit{Hodge group} of $V$ is defined as the smallest algebraic sub-group $\mathrm{Hdg}(V)$ of $\mathrm{GL}(V)$ defined over $\Q$ such that
\[
\rho(S^1)\subseteq \mathrm{Hdg}(V)(\R),
\]
where $S^1\subseteq \C^*$ is the unit circle.
Hodge classes can be equivalently defined as the classes that are invariant under he action of the Hodge group, indeed, the following holds: 
\begin{lem}\cite[Sec.\ 2]{ribet1983hodge}
\label{Lem.: Hodge classes= invariant classes}
Let $a$ and $b$ be non-negative integers. An element $v\in V^{\otimes a}\otimes (V^*)^{\otimes b}$ is a Hodge class if and only if it is invariant under the action of $\mathrm{Hdg}(V)$.\qed
\end{lem}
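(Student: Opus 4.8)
The plan is to funnel both conditions appearing in the statement---being a Hodge class and being $\mathrm{Hdg}(V)$-invariant---through the single condition of invariance under the circle $\rho(S^1)$, and then to exploit the minimality built into the definition of the Hodge group. Write $W\coloneqq V^{\otimes a}\otimes(V^*)^{\otimes b}$ for the tensor space, equipped with the weight-$(a-b)\cdot\mathrm{wt}(V)$ Hodge structure induced functorially by $\rho$, and let $\mathrm{GL}(V)$ act on $W$ in the natural (tensorial) way.

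First I would unwind how $\rho$ acts on the Hodge decomposition $W_\C=\bigoplus_{p,q}W^{p,q}$: for $z\in S^1$ the operator $\rho(z)$ acts on the component $W^{p,q}$ as multiplication by $z^{p-q}$ (for a suitable sign convention). Consequently a vector is fixed by the whole circle $\rho(S^1)$ precisely when all of its components with $p\neq q$ vanish, i.e., when it lies in $\bigoplus_p W^{p,p}$. Intersecting with the rational structure, this shows that a rational vector $v\in W$ is a Hodge class if and only if it is invariant under $\rho(S^1)$. With this equivalence in hand the two implications of the lemma follow quickly. In one direction, since $\rho(S^1)\subseteq\mathrm{Hdg}(V)(\R)$ by definition, any $\mathrm{Hdg}(V)$-invariant vector is a fortiori $\rho(S^1)$-invariant, hence a Hodge class. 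For the converse, suppose $v$ is a Hodge class and consider its stabilizer $G_v\coloneqq\{g\in\mathrm{GL}(V):g\cdot v=v\}$. Because $v$ is rational, $G_v$ is an algebraic subgroup of $\mathrm{GL}(V)$ defined over $\Q$, and by the preceding paragraph $\rho(S^1)\subseteq G_v(\R)$; the minimality of $\mathrm{Hdg}(V)$ then forces $\mathrm{Hdg}(V)\subseteq G_v$, so that $v$ is fixed by $\mathrm{Hdg}(V)$.

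The only genuinely delicate points are the bookkeeping in the first step---pinning down the exact exponent with which $\rho(z)$ scales each $W^{p,q}$, so that $\rho(S^1)$-invariance really cuts out the $(p,p)$-part and nothing more---and the observation that the stabilizer of a \emph{rational} tensor is defined over $\Q$, which is exactly what licenses the appeal to the minimality of $\mathrm{Hdg}(V)$. Neither of these is a serious obstacle; once they are checked, the lemma is essentially a formal consequence of the definition of the Hodge group as the smallest $\Q$-algebraic group containing $\rho(S^1)$.
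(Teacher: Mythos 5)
Your argument is correct and is precisely the standard proof underlying the cited reference: a rational tensor is a Hodge class iff it is fixed by $\rho(S^1)$ (since $\rho(z)$ scales the $(p,q)$-component by $z^{p-q}$), and the minimality of $\mathrm{Hdg}(V)$ applied to the $\Q$-defined stabilizer of the tensor gives the nontrivial direction. The paper states this lemma without proof, deferring to Ribet, so there is nothing further to compare.
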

Given a K3 surface $X$, the algebra of Hodge classes in $\bigotimes^{\bullet} T(X)$ is then the invariant algebra $(\bigotimes^{\bullet} T(X))^{\mathrm{Hdg}(X)}$, where $\mathrm{Hdg}(X)$ is the Hodge group of $T(X)$. Similarly, given an abelian variety $A$, denote by $\mathrm{Hdg}(A)$ the Hodge group of $H^1(A,\Q)$. Considering the natural embedding of Hodge structures 
$\bigwedge^{\bullet}H^1(A,\Q)\longhookrightarrow \bigotimes^{\bullet}H^1(A,\Q)$, we see that $(\bigwedge^{\bullet}H^1(A,\Q))^{\mathrm{Hdg}(A)}$ is
the algebra of Hodge classes in $\bigwedge^{\bullet}H^1(A,\Q)$.  

\subsection{Kuga--Satake varieties}
\label{Sec. Kuga-Satake}
We shortly recall the Kuga--Satake construction following \cite{van2000kuga} and \cite[Ch.\ 4]{huybrechtsK3surfaces}.

\smallskip

Let $(V,q)$ be a polarized rational Hodge structure of weight two of K3-type, i.e., $\dim V^{2,0}=1$. The \textit{Clifford algebra} of $V$ is the quotient of the tensor algebra of $V$ by the two-sided ideal generated by elements of the form $v\otimes v-q(v)$ for $v\in V$:
\[\mathrm{Cl}(V)\coloneqq \textstyle\bigotimes^{\bullet}V/\langle v\otimes v-q(v)\rangle.\]
Denote by $\mathrm{Cl}^+(V)$ the subalgebra of $\mathrm{Cl}(V)$ generated by the elements of even degree. As shown in \cite[Prop.\ 2.6]{huybrechtsK3surfaces}, the Hodge structure on $V$ induces a Hodge structure of weight one on $\mathrm{Cl}^+(V)$ for which there exists an embedding of Hodge structures $V\longhookrightarrow \mathrm{Cl}^+(V)\otimes \mathrm{Cl}^+(V).$
\begin{defn}
The \textit{Kuga--Satake variety} of $(V,q)$ is an abelian variety $\mathrm{KS}(V)$ such that there is an isomorphism of Hodge structures
$H^1(\mathrm{KS}(V),\Q)\simeq\mathrm{Cl}^+(V)$.
\end{defn}
Note that the abelian variety $\mathrm{KS}(V)$ is determined only up to isogeny. For our purposes, this description of the Kuga--Satake variety up to isogeny is sufficient.

\smallskip
Let $X$ be a K3 surface, and denote by $\mathrm{KS}(X)$ the Kuga--Satake variety of $T(X)$. By construction, there is an embedding of Hodge structures:
\[
T(X)\longhookrightarrow H^1(\mathrm{KS}(X),\Q)\otimes H^1(\mathrm{KS}(X),\Q)\subseteq H^2(\mathrm{KS}(X)^2,\Q).
\]
Composing this map with the natural projection $H^2(X,\Q)\longrightarrow T(X)$ induced by the polarization on $X$, we obtain the following morphism of Hodge structures:
\[H^2(X,\Q) \longrightarrow  H^2(\mathrm{KS}(X)^2,\Q).\]
This morphism is called \textit{Kuga--Satake correspondence}. By Poincaré duality, this map is induced by a Hodge class $\kappa\in H^{2,2}(X\times \mathrm{KS}(X)^2,\Q)$.
The Hodge conjecture then predicts the following:
\begin{conj}[Kuga--Satake Hodge conjecture for K3 surfaces]
Let $X$ be a K3 surface. Then, the Hodge class $\kappa$ is algebraic. 
\end{conj}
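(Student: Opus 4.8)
Since the preceding construction already exhibits $\kappa$ as a Hodge class --- it is, by Poincar\'e duality, the class corresponding to a morphism of rational Hodge structures, hence invariant under the joint Hodge group and therefore a Hodge class by Lemma \ref{Lem.: Hodge classes= invariant classes} --- the whole content of the statement lies in the \emph{algebraicity} of $\kappa$. This is the Hodge conjecture for a single, very specific class, and I would not attempt a proof in full generality; the realistic plan is to produce an explicit algebraic cycle representing $\kappa$ whenever the Kuga--Satake variety admits a geometric incarnation, which is exactly what happens in all presently known cases.

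Concretely, the plan proceeds in three steps. First I would split the Kuga--Satake variety into its simple isogeny factors and identify any factor with a geometrically accessible abelian variety $B$: for Kummer surfaces $B$ is the abelian surface with $X=\mathrm{Km}(B)$ by Morrison \cite{morrison1984k3}, while for the four-dimensional families of Theorem \ref{main thm intro} the relevant factor is the abelian fourfold of Weil type produced by Lombardo's analysis \cite{lombardo01kugasatake}. Second, I would exhibit a genuinely geometric relation between $X$ and $B$ --- the blow-up presentation $X=\mathrm{Km}(B)$, a family of curves or sheaves on $X$ parametrised by $B$, or Paranjape's incidence construction \cite{paranjape} for the double covers of $\p^2$ branched along six lines \cite{schlickewei2010hodge} --- and take the cohomology class of the associated correspondence cycle. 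Third, I would check that this algebraic class equals $\kappa$ by comparing the morphisms of Hodge structures on $T(X)$ that the two classes induce: since $T(X)$ is irreducible as a rational Hodge structure and $T(X)^{2,0}$ is one-dimensional, agreement on the line $T(X)^{2,0}$ forces the two maps to coincide up to a nonzero scalar, which can then be absorbed.

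The main obstacle is intrinsic: the Kuga--Satake construction is transcendental, built from the Clifford algebra of the Hodge structure $T(X)$, and it carries no \emph{a priori} algebro-geometric link back to $X$. The second step above therefore cannot be carried out for an arbitrary K3 surface; it requires external geometry, such as a moduli interpretation of $B$ in terms of sheaves on $X$, a Prym or Kummer construction, or a special projective model of $X$, none of which is available in general. This is precisely why the statement is recorded as a conjecture and is instead imposed as a hypothesis on the family in Theorem \ref{main thm intro}: the aim of the paper is to show that this one algebraicity input, once granted for a family, propagates to the full Hodge conjecture for all powers of every K3 surface in that family.
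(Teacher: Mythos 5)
This statement is a conjecture: the paper offers no proof of it, but instead imposes it as a hypothesis in Theorem \ref{main thm} and notes the special families (Kummer surfaces, double covers of $\p^2$ branched along six lines, the nodal quartic K3 surfaces in $\p^4$) where it is known by explicit geometric constructions. Your proposal correctly recognizes exactly this --- that $\kappa$ is a Hodge class by construction, that its algebraicity cannot be established in general because the Kuga--Satake construction is transcendental, and that the known cases rest on external geometry --- so it is in full agreement with the paper's treatment.
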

If the Kuga--Satake correspondence is algebraic, it is possible reduce the study of the Hodge conjecture on powers of K3 surfaces to the study of the Hodge conjecture for powers of abelian varieties. Indeed, as in the proof of \cite[Thm. 2]{schlickewei2010hodge}, the following holds:
\begin{lem}
\label{Lem.: Kuga-Satake corresponedence and Hodge conjecture}
Let $X$ be a K3 surface for which the Kuga--Satake correspondence is algebraic. Then, a Hodge class in the tensor algebra of $T(X)$ is algebraic if and only if its image via the Kuga--Satake correspondence is algebraic.
\begin{proof}
Let $\alpha$ be a Hodge class in $T(X)^{\otimes k}$ for some $k$. If $\alpha$ is algebraic on $X^k$ then also its image via the Kuga--Satake correspondence is algebraic as we are assuming that the Kuga--Satake Hodge conjecture for $X$. Conversely, applying \cite[Cor.\ 3.14]{kleiman1968algebraic} to the Kuga--Satake correspondence, we see that there is an algebraic projection 
\[
H^2(\mathrm{KS}(X)^2,\Q)\longrightarrow T(X)\subseteq H^2(X,\Q).
\]
Therefore, if the image of $\alpha$ is algebraic on $\mathrm{KS}(X)^{2k}$ also $\alpha$ is algebraic on $X^k$.
\end{proof}
\end{lem}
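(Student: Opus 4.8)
The plan is to regard the Kuga--Satake correspondence, and its transpose, as algebraic correspondences and to reduce both implications to transporting cycles along them. Set $Y \coloneqq \mathrm{KS}(T(X))^2$, so that the class $\kappa \in H^{2,2}(X \times Y, \Q)$ is algebraic by hypothesis and induces the embedding of Hodge structures $f \colon T(X) \hookrightarrow H^2(Y, \Q)$ from Theorem \ref{thm: K-S construction}. A Hodge class $\alpha \in T(X)^{\otimes k}$ is, via the K\"unneth identification of Lemma \ref{Lem.: Hodge conjecture and Transcendental lattice}, a class in $H^{2k}(X^k, \Q)$, and its image under the correspondence is $f^{\otimes k}(\alpha) \in H^{2k}(Y^k, \Q)$, computed by the external product cycle $\kappa^{\boxtimes k}$ acting as a correspondence from $X^k$ to $Y^k \cong \mathrm{KS}(T(X))^{2k}$.

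For the forward implication I would only use that the action of a correspondence given by an algebraic cycle preserves algebraicity: if $\alpha$ is represented by an algebraic cycle on $X^k$, then, $\kappa^{\boxtimes k}$ being algebraic, so is the image $f^{\otimes k}(\alpha)$ on $Y^k \cong \mathrm{KS}(T(X))^{2k}$.

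The substance is the converse, where the key step is to construct an algebraic correspondence in the opposite direction retracting $H^2(Y, \Q)$ onto $T(X) \subseteq H^2(X, \Q)$. The transpose cycle $\kappa^{t} \in H^{2,2}(Y \times X, \Q)$ is again algebraic and induces a morphism of Hodge structures $g \colon H^2(Y, \Q) \to H^2(X, \Q)$; since $f$ embeds the polarized Hodge structure $T(X)$ compatibly with the polarizations, the composite $g \circ f$ is a nonzero multiple of $\mathrm{id}_{T(X)}$. Combined with the algebraicity of the projection $H^2(X, \Q) \to T(X)$ along the algebraic classes $\mathrm{NS}(X)_\Q$, this should yield an algebraic projection $p \colon H^2(Y, \Q) \to T(X)$ with $p \circ f = \mathrm{id}_{T(X)}$, which is exactly what \cite[Cor.\ 3.14]{kleiman1968algebraic} delivers when applied to the Kuga--Satake correspondence. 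Then $p^{\otimes k}$ is algebraic, $p^{\otimes k}\bigl(f^{\otimes k}(\alpha)\bigr) = \alpha$, and algebraicity of the image on $\mathrm{KS}(T(X))^{2k}$ descends to algebraicity of $\alpha$ on $X^k$.

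I expect the one genuine obstacle to be the existence of this algebraic retraction. A splitting of $f$ \emph{as a morphism of Hodge structures} exists automatically by semisimplicity, but what is needed is a splitting represented by an algebraic cycle; here the fact that both $\kappa$ and its transpose are algebraic, together with the polarization compatibility forcing $g \circ f$ to be invertible on $T(X)$, is what lets one extract an algebraic projector, and \cite[Cor.\ 3.14]{kleiman1968algebraic} is the clean statement packaging this. Once the algebraic projection is in hand, the remaining bookkeeping with tensor powers and the K\"unneth identification is routine.
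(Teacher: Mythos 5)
Your proposal is correct and follows essentially the same route as the paper: the forward direction via algebraicity of the correspondence $\kappa^{\boxtimes k}$, and the converse via the algebraic projection $H^2(\mathrm{KS}(X)^2,\Q)\rightarrow T(X)$ supplied by \cite[Cor.\ 3.14]{kleiman1968algebraic}. Your additional discussion of why the retraction exists (transpose cycle, polarization compatibility) only spells out what that citation packages.
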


\section{Generators of the algebra of Hodge classes}
\label{Section: On generators of the algebra of Hodge classes}
In this section, using the techniques introduced by Ribet \cite{ribet1983hodge}, we study the algebra of Hodge classes for the powers of K3 surfaces. A similar study has been done in \cite{mari2008hodge}. However, there is a mistake in the proof of \cite[Prop.\ 5.2]{mari2008hodge} that leads to a wrong conclusion in the case of K3 surfaces with totally real multiplication.

\smallskip

By Lemma \ref{Lem.: Hodge conjecture and Transcendental lattice} and Lemma \ref{Lem.: Hodge classes= invariant classes}, to study the Hodge conjecture for powers of $X$ it suffices to investigate it for the algebra $\left(\bigotimes^{\bullet} T(X)\right)^{\mathrm{Hdg}(X)}$. To ease the exposition, let us introduce some terminology we will use in the following sections. 
An element $f \in \bigotimes^{\bullet} T(X)$ is called \textit{homogeneous of degree} $d$ if $f\in T(X)^{\otimes d}$. Note that the group of permutations $\mathfrak{S}^d$ naturally acts on $T(X)^{\otimes d}$.
\begin{defn}
\label{def.: meaning of expressed in terms of}
We say that homogeneous elements $e_1,\ldots, e_r\in\left(\bigotimes^{\bullet} T(X)\right)^{\mathrm{Hdg}(X)}$ generate $\left(\bigotimes^{\bullet} T(X)\right)^{\mathrm{Hdg}(X)}$ if any element $f\in \left(\bigotimes^{\bullet} T(X)\right)^{\mathrm{Hdg}(X)}$ can be written as a sum $f=\sum_i f_i$, where each $f_i$ is homogeneous and, up to permutation and up to a scalar, it is a tensor product of elements in $\{e_1,\ldots,e_k\}$. In this case, we say that any element in $\left(\bigotimes^{\bullet} T(X)\right)^{\mathrm{Hdg}(X)}$ can be expressed in terms of $e_1,\ldots,e_k$.
\end{defn}
Note that if $\{e_1,\ldots, e_r\}$ is a set of generators of $\left(\bigotimes^{\bullet} T(X)\right)^{\mathrm{Hdg}(X)}$, to prove the Hodge conjecture for the powers of $X$, it suffices to show that the classes $e_i$ are algebraic.

\smallskip
Let $E\coloneqq \mathrm{End}_{\mathrm{Hdg}(X)}(T(X))$ be the endomorphism algebra of $T(X)$. As $T(X)$ is an irreducible Hodge structure, $E$ is a field, cf. \cite[Sec.\ 3.2]{huybrechtsK3surfaces}. We call it the \textit{endomorphism field} of $X$. Let $\psi$ be the polarization on $T(X)$ induced by the intersection form on $H^2(X,\mathbb{Q})$. Note that $\psi$ is symmetric, since the Hodge structure on $T(X)$ has weight two. The \textit{Rosati involution} is the involution on $E$ sending an element $e$ to the element $e'$ for which 
\[\psi(e(x),y)=\psi(x,e'(y)), \quad \forall x,y\in T(X).\]
As one checks, $F\coloneqq \{e\in E\,|\, e'=e\}\subseteq E$ is a totally real field, and either $E=F$ or $E$ is a CM field with maximal totally real sub-field $F$, see \cite[Thm.\ 3.3.7]{huybrechtsK3surfaces}.
We treat these two cases separately.

\subsection{K3 surfaces with a CM endomorphism field}
\label{Section: CM case}
In this section, we prove the following:
\begin{thm}\cite[Prop.\ 5.2]{mari2008hodge}
\label{Thm: invariant degree 2 elements generate in the CM case}
Let $X$ be a K3 surface whose endomorphism field is a CM field. Then, any Hodge class $\bigotimes^{\bullet}T(X)$ can be expressed in terms of Hodge classes in $T(X)^{\otimes 2}$.
\end{thm}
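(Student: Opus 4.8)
The plan is to pass to the Hodge group $G := \mathrm{Hdg}(X)$ and compute its invariants in $\bigotimes^{\bullet}V$, where $V := T(X)$, after extending scalars to $\C$. First I would record the structure coming from the CM field $E$. Write $\iota$ for the Rosati involution (complex conjugation) on $E$ and $F = E^{\iota}$ for its maximal totally real subfield, so that $[E:\Q] = 2d$ with $d = [F:\Q]$; recall (see \cite{huybrechtsK3surfaces}) that in the CM case $V$ is one-dimensional over $E$, whence $\dim_{\Q}V = 2d$ and $G$ is an algebraic torus. Since every element of $G$ commutes with the $E$-action and preserves the polarization $\psi$, and $\psi(ex,y) = \psi(x,\iota(e)y)$, the group $G$ is contained in the norm-one torus $U := \ker\bigl(N_{E/F}\colon \mathrm{Res}_{E/\Q}\mathbb{G}_m \to \mathrm{Res}_{F/\Q}\mathbb{G}_m\bigr)$.

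The key preliminary step is to prove that $G = U$. Extending scalars, $V_{\C} = \bigoplus_{\s \in \Sigma}V_{\s}$ is the decomposition into $E$-eigenlines indexed by $\Sigma = \mathrm{Hom}(E,\C)$, with $\iota$ inducing the pairing $\s \leftrightarrow \bar\s$. As $V$ is of K3 type, there is a single $\s_0$ with $V^{2,0} = V_{\s_0}$ and $V^{0,2} = V_{\bar\s_0}$, while $V^{1,1} = \bigoplus_{\s \neq \s_0,\bar\s_0}V_{\s}$; hence the cocharacter of $h(S^1)$ is, up to scaling, $\epsilon_{\s_0} - \epsilon_{\bar\s_0}$ in $X_*(\mathrm{Res}_{E/\Q}\mathbb{G}_m)$. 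I would then observe that its Galois conjugates are exactly the cocharacters $\epsilon_{\s} - \epsilon_{\bar\s}$ for $\s$ running over $\Sigma$, since $\mathrm{Gal}$ acts transitively on $\Sigma$ and commutes with $\iota$, and that these span $X_*(U)_{\Q}$. Because $G$ is the smallest $\Q$-subtorus whose real points contain $h(S^1)$ and both $G$ and $U$ are connected, this forces $G = U$.

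With $G = U$ in hand the conclusion becomes a combinatorial statement about characters. One has $X^*(U) = \mathbb{Z}[\Sigma]/\langle \s + \bar\s\rangle$, free of rank $d$ on $\{[\s] : \s \in \Phi\}$ for a CM type $\Phi$, with $[\bar\s] = -[\s]$, and $V_{\s}$ affords the character $[\s]$. A monomial $V_{\s_1}\otimes\cdots\otimes V_{\s_n}$ is therefore $U$-invariant if and only if $\sum_i[\s_i] = 0$, which by freeness says exactly that the multiplicities of $\s$ and of $\bar\s$ among the $\s_i$ coincide for every $\s$. Such a monomial can then be partitioned into pairs $\{i,j\}$ with $\s_j = \bar\s_i$, so that after reordering it is a tensor product of degree-two invariant monomials $V_{\s_i}\otimes V_{\bar\s_i}$. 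Thus over $\C$ the space $(\bigotimes^{\bullet}V_{\C})^{U}$ is spanned by tensor products of elements of $(V_{\C}^{\otimes 2})^{U}$. Finally I would descend to $\Q$: the degree-two Hodge classes $(V^{\otimes 2})^{G}$ form a $\Q$-structure on $(V_{\C}^{\otimes 2})^{U}$, and since forming $\Q$-spans of permuted tensor products commutes with $-\otimes_{\Q}\C$, the equality of complex dimensions upgrades to the statement that $(\bigotimes^{\bullet}V)^{G}$ is spanned over $\Q$ by permuted tensor products of degree-two Hodge classes, which is the assertion of the theorem in the sense of Remark \ref{rmk.: meaning of expressed in terms of}.

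The main obstacle is the identification $G = U$. Everything downstream is the clean pairing argument, but it genuinely requires that the only relations among the characters $[\s]$ be the pairing relations $[\s] + [\bar\s] = 0$ — equivalently, that $G$ be the full norm-one torus and not a proper subtorus, since over a smaller torus additional invariants not coming from degree two could appear, as indeed happens in the totally real case treated later.
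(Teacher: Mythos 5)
There is a genuine gap at the very first step, and everything downstream depends on it: you assert, as a recalled fact, that in the CM case $V=T(X)$ is one-dimensional over $E$, so that the Hodge group is a torus contained in the norm-one torus $U=\ker(N_{E/F})$. But the hypothesis of the theorem is only that the endomorphism field $E$ is a CM field; this does not force $\dim_E T(X)=1$. The condition $\dim_E T(X)=1$ is the definition of a K3 surface \emph{with CM} (equivalently, commutative Mumford--Tate group), which is strictly stronger than $E$ being a CM field, and K3 surfaces with $E$ a CM field and $m\coloneqq\dim_E T(X)>1$ do exist (for instance, a K3 surface with a non-symplectic automorphism of order three has $\Q(\zeta_3)\subseteq E$, and for the generic such surface $E=\Q(\zeta_3)$ while $m=\dim_\Q T(X)/2$ can be as large as $10$). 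For $m>1$ Zarhin's theorem gives $\mathrm{Hdg}(X)=\mathrm{Res}_{F/\Q}\mathrm{U}(T(X),\varphi)$ for an $m$-dimensional $E$-Hermitian space, which is not a torus; the invariants are then not indexed by characters, and your pairing argument (matching each $\sigma$ with $\bar\sigma$ among the tensor factors) has no meaning.

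To handle the general case one must work with the full unitary group: after the eigenspace decomposition $T(X)_\R=\bigoplus_\sigma V_\sigma$ one identifies $\mathrm{U}(V_\sigma,\varphi_\sigma)(\C)$ with $\mathrm{GL}(V^{1,0},\C)$ acting on $(V_\sigma)_\C\simeq V^{1,0}\oplus(V^{1,0})^*$ as the standard representation plus its dual (Lemma \ref{Lem: the action of S}), and then invokes the first fundamental theorem of invariant theory for $\mathrm{GL}_m$: all invariants in mixed tensors are spanned by complete contractions, which factor into degree-two contractions. This is exactly what the paper's proof does, and it is the step your argument is missing. What you have written is a correct and self-contained proof of the special case $m=1$ (your cocharacter computation of $\mathrm{Hdg}(X)=U$ is essentially Zarhin's argument in that case, and the descent to $\Q$ at the end is fine), but as it stands it does not prove the theorem under the stated hypothesis.
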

Note that in the reference it is stated that the same holds also in the case of K3 surfaces with totally real multiplication. In the next section, we show that this is not true.

\smallskip

Before proving Theorem \ref{Thm: invariant degree 2 elements generate in the CM case}, let us deduce from it the Hodge conjecture for all powers of the K3 surface $X$:
\begin{cor}\cite[Thm.\ 5.4]{mari2008hodge}
\label{Cor.: H.c. for K3 of CM type}
Let $X$ be a complex, projective K3 surface whose endomorphism field is a CM field. Then, the Hodge conjecture holds for all powers of $X$.
\begin{proof}
By Lemma \ref{Lem.: Hodge conjecture and Transcendental lattice}, in order to prove the Hodge conjecture for all powers of $X$, we need to show that the Hodge classes in $\bigotimes^{\bullet} T(X)$ are algebraic. By Theorem \ref{Thm: invariant degree 2 elements generate in the CM case}, to show this, it suffices to show that every Hodge class in $T(X)^{\otimes 2}$ is algebraic. This has been proven in \cite{buskin2019every} and again in \cite[Cor.\ 0.4.ii]{huybrechts2019motives}, where the authors prove the Hodge conjecture for the square of a K3 surface with CM endomorphism field.
\end{proof}
\end{cor}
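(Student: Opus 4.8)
The plan is to chain together the reduction of Lemma \ref{Lem.: Hodge conjecture and Transcendental lattice}, the degree-two generation established in Theorem \ref{Thm: invariant degree 2 elements generate in the CM case}, and the known algebraicity of degree-two Hodge classes. First, by Lemma \ref{Lem.: Hodge conjecture and Transcendental lattice}, proving the Hodge conjecture for all powers of $X$ is equivalent to showing that every Hodge class in the tensor algebra $\bigotimes^{\bullet} T(X)$ is algebraic. So I would reduce the problem to establishing algebraicity of an arbitrary invariant class $f \in (\bigotimes^{\bullet} T(X))^{\mathrm{Hdg}(X)}$.

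Next, since $X$ has a CM endomorphism field, I would invoke Theorem \ref{Thm: invariant degree 2 elements generate in the CM case}, which says that any such $f$ can be expressed in terms of Hodge classes lying in $T(X)^{\otimes 2}$. Unwinding the convention fixed in Remark \ref{rmk.: meaning of expressed in terms of}, this means we may write $f = \sum_i f_i$, where each $f_i$ is, up to permuting its factors and rescaling, a tensor product of degree-two Hodge classes. Because products and sums of classes of algebraic cycles remain algebraic, it therefore suffices to prove that every Hodge class in $T(X)^{\otimes 2}$ is algebraic on $X^2$; the algebraicity of $f$, and hence of all Hodge classes in $\bigotimes^{\bullet} T(X)$, then follows.

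The final and only substantial step is to establish algebraicity of these degree-two classes. This is precisely the Hodge conjecture for the square $X^2$ of a K3 surface with CM endomorphism field, which is the deep theorem proved by Buskin \cite{buskin2019every} and, by a different argument using derived categories, by Huybrechts \cite[Cor.\ 0.4.ii]{huybrechts2019motives}. Applying their result makes each degree-two generator algebraic, and hence $f$ is algebraic, completing the proof. The main obstacle in this chain is exactly this last input: the two preceding steps are structural consequences of results already in place, whereas the passage from Hodge to algebraic in degree two is not formal and rests on the substantial work of Buskin and Huybrechts.
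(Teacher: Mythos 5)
Your proposal is correct and follows exactly the same route as the paper's proof: reduce via Lemma \ref{Lem.: Hodge conjecture and Transcendental lattice} to the tensor algebra of $T(X)$, apply Theorem \ref{Thm: invariant degree 2 elements generate in the CM case} to reduce to degree-two classes, and cite Buskin and Huybrechts for their algebraicity. Your extra remarks unwinding the generation convention and noting that sums and tensor products of algebraic classes are algebraic simply make explicit what the paper leaves implicit.
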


The remainder of this section is dedicated to the proof of Theorem \ref{Thm: invariant degree 2 elements generate in the CM case}.
Let us start by recalling the following result from linear algebra:
\begin{lem}\cite[Lem.\ 4.3]{deligne1982hodge}
\label{Lem: Trace}
Let $k$ be a field and let $V$ be a vector space of finite dimension over a finite separable field extensions $k'$ of $k$. Then, the map
\[\mathrm{Hom}_{k'}(V,k')\longrightarrow \mathrm{Hom}_k(V,k),\quad f\longmapsto\mathrm{Tr}_{k'/k}\circ f,\]
is an isomorphism of $k$-vector spaces.
\end{lem}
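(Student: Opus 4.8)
The plan is to show the map, call it $\Phi$, is a $k$-linear isomorphism by matching $k$-dimensions and then proving injectivity. That $\Phi$ is $k$-linear is immediate, since postcomposition with the $k$-linear map $\mathrm{Tr}_{k'/k}$ is $k$-linear. First I would count dimensions: writing $n\coloneqq\dim_{k'}V$ and $d\coloneqq[k':k]$, the source $\mathrm{Hom}_{k'}(V,k')$ is the $k'$-dual of $V$, hence a $k'$-vector space of dimension $n$ and so of $k$-dimension $nd$; the target $\mathrm{Hom}_k(V,k)$ is the $k$-dual of $V$, which as a $k$-vector space has dimension $nd$. Since these two finite-dimensional $k$-vector spaces have equal dimension, it suffices to prove that $\Phi$ is injective.

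For injectivity I would use that, because $k'/k$ is separable, the trace form
\[
k'\times k'\to k,\qquad (x,y)\mapsto \mathrm{Tr}_{k'/k}(xy),
\]
is non-degenerate. Granting this, suppose $f\in\mathrm{Hom}_{k'}(V,k')$ is nonzero and choose $v\in V$ with $a\coloneqq f(v)\neq 0$ in $k'$. Non-degeneracy gives some $b\in k'$ with $\mathrm{Tr}_{k'/k}(ba)\neq 0$. Since $V$ is a $k'$-vector space we may form $bv\in V$, and the $k'$-linearity of $f$ yields $f(bv)=b\,f(v)=ba$, so that $(\mathrm{Tr}_{k'/k}\circ f)(bv)=\mathrm{Tr}_{k'/k}(ba)\neq 0$. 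Hence $\Phi(f)=\mathrm{Tr}_{k'/k}\circ f\neq 0$, which proves injectivity; combined with the dimension count, $\Phi$ is an isomorphism.

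The only substantial point, and the place where separability is indispensable, is the non-degeneracy of the trace form: for a purely inseparable extension the trace can vanish identically and the statement breaks down. This is classical, but to keep the argument self-contained I would recall that separability gives an isomorphism $k'\otimes_k\overline{k}\cong\overline{k}^{\,d}$, under which $\mathrm{Tr}_{k'/k}$ becomes the sum of the $d$ distinct $k$-embeddings $k'\hookrightarrow\overline{k}$; the linear independence of these characters then shows the pairing $(x,y)\mapsto\mathrm{Tr}_{k'/k}(xy)$ is non-degenerate. With this fact in hand the proof above is complete.
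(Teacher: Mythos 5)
Your proof is correct. The paper gives no proof of this lemma at all --- it is quoted directly from Deligne \cite[Lem.\ 4.3]{deligne1982hodge} --- and your argument ($k$-linearity of postcomposition with the trace, equality of the $k$-dimensions $nd$ on both sides, and injectivity via the non-degeneracy of the trace pairing $(x,y)\mapsto\mathrm{Tr}_{k'/k}(xy)$ for a separable extension) is exactly the standard proof of that cited result, with the role of separability correctly identified as the essential hypothesis.
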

\begin{lem}\cite[Sec.\ 2.1]{zarhin1983hodge}
\label{Lem: varphi CM K3 surface}
Let $X$ be a K3 surface with a CM endomorphism field $E$ and let $\psi$ be the polarization on $T(X)$ induced by the intersection pairing. Then, there exists a unique non-degenerate $E$-Hermitian map $\varphi\colon T(X)\times T(X)\longrightarrow  E$ which satisfies $\psi(v,w)=\mathrm{Tr}_{E/\Q}(\varphi(v,w))$ for every $v,w\in T(X)$.
\begin{proof}
Denote by $T(X)^\dag$ the space $T(X)$ with $E$ acting on it via complex conjugation. The polarization $\psi$ can be seen as a $\Q$-linear morphism $T(X)\otimes_E T(X)^\dag\longrightarrow \Q$.
Lemma \ref{Lem: Trace} then says that there exists a unique $E$-linear map $\varphi\colon T(X)\otimes_E T(X)^\dag\longrightarrow E$ such that $\psi(v,w)=\mathrm{Tr}_{E/\Q}(\varphi(v,w))$ for every $v,w\in T(X)$.
Viewing $\varphi$ as a map $T(X)\times T(X)\longrightarrow E$, we see that it is $E$-Hermitian and satisfies all the required proprieties.
\end{proof}
\end{lem}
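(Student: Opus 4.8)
The plan is to reduce the construction of $\varphi$ to the purely formal content of Lemma \ref{Lem: Trace}, the only genuine geometric input being the compatibility between the polarization $\psi$ and the $E$-action. First I would record how $E$ interacts with $\psi$: by the defining property of the Rosati involution one has $\psi(ev,w)=\psi(v,e'w)$ for all $v,w\in T(X)$ and all $e\in E$, and in the CM case the Rosati involution $e\mapsto e'$ is complex conjugation (see \cite[Thm.\ 3.3.7]{huybrechtsK3surfaces}). Introducing $T(X)^{\dag}$, the space $T(X)$ with $E$ acting through complex conjugation, this identity says precisely that $\psi$ is $E$-balanced, hence descends to a $\Q$-linear functional on the $E$-tensor product
\[
\psi\colon T(X)\otimes_E T(X)^{\dag}\longrightarrow \Q.
\]

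Since $E/\Q$ is a finite separable extension, Lemma \ref{Lem: Trace} applied to $V=T(X)\otimes_E T(X)^{\dag}$ then furnishes a \emph{unique} $E$-linear map $\varphi\colon T(X)\otimes_E T(X)^{\dag}\to E$ with $\mathrm{Tr}_{E/\Q}\circ\varphi=\psi$. Reading $\varphi$ back as a map on $T(X)\times T(X)$, the tensor relation $ev\otimes w=v\otimes \bar e w$ translates into $E$-linearity in the first variable and conjugate-linearity in the second, so that $\varphi$ is automatically a sesquilinear form satisfying $\mathrm{Tr}_{E/\Q}(\varphi(v,w))=\psi(v,w)$. This disposes of existence and uniqueness in one stroke.

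It then remains to verify that $\varphi$ is Hermitian and non-degenerate, and here I would again exploit the uniqueness. Set $\tilde\varphi(v,w)\coloneqq\overline{\varphi(w,v)}$ and check that it is sesquilinear of the same type, so that it too descends to an $E$-linear map on $T(X)\otimes_E T(X)^{\dag}$. Since $\mathrm{Tr}_{E/\Q}$ is invariant under complex conjugation, one computes $\mathrm{Tr}_{E/\Q}(\tilde\varphi(v,w))=\mathrm{Tr}_{E/\Q}(\varphi(w,v))=\psi(w,v)=\psi(v,w)$, where the last equality is the symmetry of $\psi$. Thus $\tilde\varphi$ also lifts $\psi$ through the trace, and uniqueness forces $\tilde\varphi=\varphi$, i.e.\ $\varphi(v,w)=\overline{\varphi(w,v)}$. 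For non-degeneracy, if $\varphi(v,\cdot)\equiv 0$ then $\psi(v,\cdot)=\mathrm{Tr}_{E/\Q}(\varphi(v,\cdot))\equiv 0$, whence $v=0$ because $\psi$ is a polarization and hence non-degenerate; the Hermitian symmetry then yields non-degeneracy on both sides.

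The step I expect to require the most care is the very first one: checking that $\psi$ factors through $T(X)\otimes_E T(X)^{\dag}$ rather than merely over $\Q$. This is exactly where the CM hypothesis enters, through the identification of the Rosati involution with complex conjugation; once that balancing is in place, everything else is a formal consequence of the trace isomorphism together with the symmetry of $\psi$.
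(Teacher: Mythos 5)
Your proof is correct and follows the same route as the paper: view $\psi$ as a $\Q$-linear functional on $T(X)\otimes_E T(X)^{\dag}$ using the Rosati involution being complex conjugation, apply the trace lemma to obtain the unique $E$-linear lift $\varphi$, and then read off the Hermitian property and non-degeneracy. You merely spell out the final verifications (via uniqueness and the symmetry of $\psi$) that the paper leaves implicit.
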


Let $\mathrm{Hdg}(X)$ be the Hodge group of the transcendental lattice of $X$. As the polarization $\psi$ is  a morphism of Hodge structures 
\[
\psi\colon T(X)\otimes T(X)\longrightarrow \Q(-2),
\]
the action of $\mathrm{Hdg}(X)$ on $T(X)$ preserves $\psi$, i.e., $\psi(Av,Aw)=\psi(v,w)$ for every $v,w\in T(X)$ and every $A\in \mathrm{Hdg}(X)$. By construction of $\varphi$, we conclude that $\mathrm{Hdg}(X)\subseteq \mathrm{Res}_{F/\Q}(\mathrm{U}(T(X),\varphi))$, where $\mathrm{U}(T(X),\varphi)$ is the unitary group with respect to the $E$-Hermitian form $\varphi$, and $F$ is the maximal totally real sub-field of $E$. In \cite[Thm.\ 2.3.1]{zarhin1983hodge}, it is shown that this inclusion is always an equality, i.e., that the Hodge group of $X$ satisfies
$\mathrm{Hdg}(X)=\mathrm{Res}_{F/\Q}(\mathrm{U}(T(X),\varphi)).$
By Lemma \ref{Lem.: Hodge classes= invariant classes}, the ring of Hodge classes in $\bigotimes^{\bullet}T(X)$ is then equal to
\begin{equation*}
    \label{eq.: hdg classes in T(X) in the CM case}
\textstyle(\bigotimes^{\bullet}T(X))^{\mathrm{Hdg}(X)}=(\bigotimes^{\bullet}T(X))^{\mathrm{Res}_{F/\Q}(\mathrm{U}(T(X),\varphi))}.
\end{equation*}
Let $\mathrm{Hdg}(X)(\mathbb{C})$ be the group of $\C$-valued points of $\mathrm{Hdg}(X)$. As in \cite[Sec.\ 6.7]{van1994introduction}, there is an isomorphism of graded algebras 
\begin{equation}
\label{eq. invariants and scalar extension}
    \textstyle(\bigotimes^{\bullet} T(X))^{\mathrm{Hdg}(X)}\otimes_{\Q}\C\simeq(\bigotimes^{\bullet}T(X)_\C)^{\mathrm{Hdg}(X)(\mathbb{C})}.
\end{equation}
From this, using the terminology of Definition \ref{def.: meaning of expressed in terms of}, we deduce the following:
\begin{lem}
\label{Lem: extension of scalars}
If $\widetilde{e}_1,\ldots, \widetilde{e}_r$ are homogeneous generators of $(\bigotimes^{\bullet}T(X)_\C)^{\mathrm{Hdg}(X)(\C)}$ over $\C$, then there exist generators $e_1,\ldots, e_r$ of $(\bigotimes^{\bullet}T(X))^{\mathrm{Hdg}(X)}$ over $\Q$, such that $e_i$ is homogeneous with $\deg e_i=\deg \widetilde{e}_i$ for every $i$.
\end{lem}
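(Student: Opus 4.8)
The plan is to use the isomorphism of graded algebras \eqref{eq. invariants and scalar extension}, for which I abbreviate $A\coloneqq(\bigotimes^{\bullet}T(X))^{\mathrm{Hdg}(X)}$ and $A_{\C}\coloneqq(\bigotimes^{\bullet}T(X)_{\C})^{\mathrm{Hdg}(X)(\C)}$, so that $A\otimes_{\Q}\C\simeq A_{\C}$ as graded algebras. The idea is that generation in the sense of Remark~\ref{rmk.: meaning of expressed in terms of} is controlled, in each degree, by a single finite-dimensional piece of data — the space of \emph{indecomposables} — and that this piece of data commutes with the base change $\Q\hookrightarrow\C$. Since the degree-zero part of the tensor algebra $\bigotimes^{\bullet}T(X)$ is the field $\Q$, on which $\mathrm{Hdg}(X)$ acts trivially, we have $A_{0}=\Q$, so $A$ is a connected graded $\Q$-algebra; likewise $A_{\C}$ is a connected graded $\C$-algebra.

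First I would record the graded Nakayama lemma in the form needed here: for a connected graded algebra $A=\bigoplus_{d\geq 0}A_{d}$ with $A_{0}$ a field, a family of homogeneous elements generates $A$ as an algebra (and a fortiori in the sense of Remark~\ref{rmk.: meaning of expressed in terms of}, where arbitrary reorderings of the tensor factors are permitted) if and only if their images span the space of indecomposables $Q\coloneqq A_{+}/A_{+}^{2}$, where $A_{+}=\bigoplus_{d\geq 1}A_{d}$. This is proved by induction on the degree: in degree $d$ one writes an invariant as a $\Q$-combination of the degree-$d$ generators modulo $(A_{+}^{2})_{d}$, and the latter is a sum of products of invariants of strictly smaller degree, which by induction already lie in the subalgebra generated by our elements. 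The key point is that $Q$ is graded, $Q=\bigoplus_{d\geq 1}Q_{d}$ with $Q_{d}=A_{d}/(A_{+}^{2})_{d}$, so the criterion can be checked one degree at a time.

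Next I would observe that forming indecomposables commutes with base change: because $\C$ is flat over $\Q$ one has $(A_{\C})_{+}=A_{+}\otimes_{\Q}\C$ and $(A_{\C})_{+}^{2}=A_{+}^{2}\otimes_{\Q}\C$, hence $Q_{\C,d}\simeq Q_{d}\otimes_{\Q}\C$ and in particular $\dim_{\C}Q_{\C,d}=\dim_{\Q}Q_{d}$ for every $d$. Applying the Nakayama criterion over $\C$ to the given generators $\widetilde{e}_{1},\ldots,\widetilde{e}_{r}$, their images span $Q_{\C}$, so for each $d$ the images of those $\widetilde{e}_{i}$ with $\deg\widetilde{e}_{i}=d$ already span $Q_{\C,d}$. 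Writing $n_{d}$ for the number of such indices, we get $n_{d}\geq\dim_{\C}Q_{\C,d}=\dim_{\Q}Q_{d}$. Therefore in each degree $d$ I can choose $n_{d}$ homogeneous elements of $A_{d}$ whose images span $Q_{d}$, and assigning these, degree by degree, to the indices $i$ with $\deg\widetilde{e}_{i}=d$ produces $e_{1},\ldots,e_{r}\in A$ with $\deg e_{i}=\deg\widetilde{e}_{i}$ whose images span $Q$. By the Nakayama criterion over $\Q$, the $e_{i}$ generate $A$, as required.

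The main obstacle, and the reason the statement is not entirely formal, is precisely that individual generators need not descend from $\C$ to $\Q$: one cannot simply take rational parts of the $\widetilde{e}_{i}$. The argument circumvents this by descending only the \emph{dimension count} of indecomposables, which does behave well under flat base change, and then freely choosing rational elements of the correct degrees to realise that count. One should also check that the non-commutative tensor (concatenation) product on $\bigotimes^{\bullet}T(X)$ is compatible with the convention of Remark~\ref{rmk.: meaning of expressed in terms of}; since that convention permits permuting tensor factors, algebra generation in the ordinary sense implies generation in the required sense, so the Nakayama argument above suffices.
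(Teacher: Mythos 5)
The paper itself offers no argument for this lemma beyond asserting that it follows from the isomorphism \eqref{eq. invariants and scalar extension}, so your write-up supplies a deduction the paper leaves implicit, and its central idea --- descend not the generators themselves but the dimension count of indecomposables, which behaves well under the flat base change $\Q\hookrightarrow\C$, and then choose rational homogeneous elements of the prescribed degrees realising that count --- is the right way to make the assertion precise. Your observations that $A_0=\Q$, that $(A_\C)_+^2=A_+^2\otimes_\Q\C$ by flatness, and that one cannot simply take rational parts of the $\widetilde{e}_i$ are all correct.

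There is, however, one genuine gap. The word \textquotedblleft generators" in both the hypothesis and the conclusion is used in the sense of Remark \ref{rmk.: meaning of expressed in terms of}, which allows permuting tensor factors. Your argument silently upgrades the hypothesis to ordinary graded-algebra generation: you assert that the images of $\widetilde{e}_1,\ldots,\widetilde{e}_r$ span $(A_\C)_+/(A_\C)_+^2$ (in your notation), but this does not follow if the $\widetilde{e}_i$ only generate up to permutation, since a permutation of a concatenation $x\otimes y$ of two invariants need not again be a concatenation of two invariants, and a permutation of a single $\widetilde{e}_i$ need not lie in the span of the $\widetilde{e}_j$ modulo decomposables. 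This matters in practice: the generators produced by the invariant-theory results the paper invokes (complete contractions, determinants) generate only up to permutation. You noticed the permutation issue for the conclusion, where it is harmless, but not for the hypothesis, where it is not. The repair stays within your framework: in each degree $d$ replace $(A_+^2)_d$ by its saturation $P_d$ under the $\mathfrak{S}_d$-action on $T(X)^{\otimes d}$, so that $\bar{Q}_d\coloneqq A_d/P_d$ is a $\Q[\mathfrak{S}_d]$-module and generation in the sense of the Remark is equivalent to $\bar{Q}_d$ being generated as a $\Q[\mathfrak{S}_d]$-module by the images of the degree-$d$ generators, i.e.\ by $n_d$ elements. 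Since $P_d$ is a finite sum of images of linear maps, it still commutes with $-\otimes_\Q\C$; and since every irreducible representation of $\mathfrak{S}_d$ is defined over $\Q$ and absolutely irreducible, the minimal number of module generators of $\bar{Q}_d$ is unchanged under extension of scalars to $\C$. With these substitutions your degree-by-degree choice of rational elements and the graded Nakayama induction go through verbatim.
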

In particular, to show that any element in $(\bigotimes^{\bullet}T(X))^{\mathrm{Hdg}(X)}$ can be expressed in terms of degree-two elements, it suffices to show that any element in $(\bigotimes^{\bullet}T(X)_\C)^{\mathrm{Hdg}(X)(\C)}$ can be expressed in terms of degree-two elements.
Let $F$ be the maximal totally real subfield of $E$. Extending scalars to $\R$, we get the following well known fact:
\begin{lem}
\label{lem: Hermitian form on V_s}
The real vector space $T(X)_\R$ decomposes as a direct sum 
\[T(X)_\R=\displaystyle\bigoplus_{\sigma\colon F\hookrightarrow \R}V_\s,\]
where $V_\s$ are real vector spaces with an $E$-action. Moreover, this decomposition is $\varphi$-orthogonal and $\varphi$ induces a non-degenerate $\C$-Hermitian form  on $V_\s$ for every $\s$.
\begin{proof}
As $T(X)$ is a free $F$-module, $T(X)\otimes_\Q\R$ is a free $F\otimes_\Q\R$-module. Using the isomorphism 
\[F\otimes_\Q\R\simeq \prod_{\sigma\colon F\longhookrightarrow\R}\R, \quad e\otimes r\longmapsto (\s(e)r)_\s,\]
we see that 
\[T(X)_\R=\displaystyle\bigoplus_{\sigma\colon F\hookrightarrow \R}V_\s,\]
where $V_\s\coloneqq T(X)\otimes_{F,\s}\R\simeq\{v\in T(X)\otimes_{\Q}\R \,| \,f(v)=\s(f)v \quad \forall f\in F\}.$
Let us now show that $E$ acts on $V_\s$. Let $\sigma\colon F\longhookrightarrow \mathbb{R}$ be an embedding, and let $v\in V_\s$ be any element. For every $e\in E$ and every $f\in F$, we have that
\[f(e(v))=e(f(v))=e(\sigma(f)v)=\sigma(f)e(v),\]
where the last equality follows from the fact that the action of $E$ on $T(X)_\R$ is the $\R$-linear extension of the action of $E$ on $T(X)$. This shows that $e(v)\in V_\s$ and that the action of $E$ on $T(X)_\R$ induces an action on $V_\s$.
To see that this decomposition is $\varphi$-orthogonal, let $\s,\widetilde{\s}\colon F\longhookrightarrow \R$ be two different embeddings and choose $f\in F$ such that $\s(f)\not=\widetilde{\s}(f)$. Since $\varphi$ is $E$-Hermitian and since the elements of $F$ are fixed by the Rosati involution, we have that
\[\s(f)\varphi(v,w)=\varphi(f(v),w)=\varphi(v,f(w))=\widetilde{\s}(f)\varphi(v,w),\]
for every $v\in V_\s$ and $w\in V_{\Tilde{\s}}$. From this, we deduce that $\varphi(v,w)=0$, i.e., $\varphi|_{V_\s\times V_{\widetilde{\s}}}=0$.
To define the $\C$-Hermitian form $\varphi_\s$ on $V_\s$, extend the embedding $\s\colon F\longhookrightarrow\R$ to an embedding $\tau\colon E\longhookrightarrow\C$, which can be done since $E$ is a totally imaginary quadratic extension of $F$.
Let $\varphi_\s$ be the composition
\[\begin{tikzcd}
\varphi_\s\colon V_\s\times V_\s \arrow[r, "{\varphi\otimes_{F,\s}\R}"] & {E\otimes_{F,\s}\R} \arrow[r, hook, "{\tau\otimes_{F,\s}\R}"] & \C.
\end{tikzcd}
\]
 From the fact that the decomposition $T(X)_\R=\bigoplus_\s V_\s$ is $\varphi$-orthogonal, it follows that $\varphi_\s$ is a non-degenerate $\C$-Hermitian form on $V_\s$. This concludes the proof.
\end{proof}
\end{lem}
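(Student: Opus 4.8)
The plan is to exploit the splitting of $F\otimes_\Q\R$ into a product of copies of $\R$ indexed by the real embeddings of the totally real field $F$. Since $T(X)$ is a finite-dimensional $E$-vector space it is in particular a free $F$-module, so $T(X)_\R$ is a free module over $F\otimes_\Q\R\simeq\prod_{\s\colon F\hookrightarrow\R}\R$. First I would transport the orthogonal idempotents of this product algebra to projectors on $T(X)_\R$; their images are exactly the eigenspaces
\[
V_\s=\{v\in T(X)_\R\mid f\cdot v=\s(f)v\ \text{for all}\ f\in F\}=T(X)\otimes_{F,\s}\R,
\]
and these sum to $T(X)_\R$, giving the decomposition. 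For the $E$-structure on each $V_\s$ I would use that $E$ is commutative and that its action on $T(X)_\R$ is the $\R$-linear extension of the action on $T(X)$: for $v\in V_\s$, $e\in E$ and $f\in F$ one computes $f(e(v))=e(f(v))=e(\s(f)v)=\s(f)e(v)$, so $e(v)\in V_\s$ and the $E$-action restricts to $V_\s$.

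The orthogonality is where the totally real subfield enters. Extending $\varphi$ $\R$-linearly to a form $\varphi_\R\colon T(X)_\R\times T(X)_\R\to E\otimes_\Q\R$, for two distinct embeddings $\s\neq\widetilde{\s}$ I would choose $f\in F$ with $\s(f)\neq\widetilde{\s}(f)$. Since $\varphi$ is $E$-Hermitian and the Rosati involution (equivalently, the complex conjugation on $E$) fixes $F$ pointwise, for $v\in V_\s$ and $w\in V_{\widetilde{\s}}$ one obtains
\[
\s(f)\varphi_\R(v,w)=\varphi_\R(f(v),w)=\varphi_\R(v,f(w))=\widetilde{\s}(f)\varphi_\R(v,w).
\]
As $\s(f)-\widetilde{\s}(f)$ is a nonzero real scalar, hence invertible on the $\R$-vector space $E\otimes_\Q\R$, this forces $\varphi_\R(v,w)=0$; the decomposition is therefore $\varphi$-orthogonal. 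The same kind of computation, applied with $v,w$ both in $V_\s$, shows that $\varphi_\R(v,w)$ lies in the single factor $E\otimes_{F,\s}\R$, which is the assertion that makes the next step well posed.

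Finally, to produce the Hermitian form I would extend $\s$ to an embedding $\tau\colon E\hookrightarrow\C$, which exists because $E/F$ is a quadratic extension; then $E\otimes_{F,\s}\R\simeq\C$ via $\tau$, this makes $V_\s$ a $\C$-vector space, and $\varphi_\s$ is defined as the composite of $\varphi_\R|_{V_\s\times V_\s}$ with $\tau$. Non-degeneracy should follow formally from orthogonality: given $0\neq v\in V_\s$, non-degeneracy of $\varphi_\R$ on $T(X)_\R$ gives some $u$ with $\varphi_\R(v,u)\neq0$, and by orthogonality only the $V_\s$-component of $u$ pairs nontrivially with $v$, so it serves as a partner inside $V_\s$ and $\tau$ carries the nonzero value to a nonzero element of $\C$. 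I expect the only delicate point to be the bookkeeping of conjugation conventions, namely verifying that the complex conjugation on $E$ corresponds under $\tau$ to ordinary conjugation on $\C$ (so that $\varphi_\s$ is genuinely $\C$-Hermitian); this is where the fact that $\tau$ and its conjugate are the two extensions of $\s$ must be used, but it is a routine check rather than a substantive obstacle.
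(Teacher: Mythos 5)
Your proposal follows the paper's own proof essentially step for step: the same eigenspace decomposition via the splitting of $F\otimes_\Q\R$, the same computation giving the $E$-structure on each $V_\s$, the same choice of $f$ with $\s(f)\neq\widetilde{\s}(f)$ for orthogonality, and the same definition of $\varphi_\s$ via an extension $\tau$ of $\s$ with non-degeneracy deduced from orthogonality. The argument is correct; the extra details you supply (invertibility of $\s(f)-\widetilde{\s}(f)$ and the remark on conjugation conventions) only make explicit what the paper leaves implicit.
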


\begin{rmk}
\label{rmk.: unitary group}
Since the action of $\mathrm{Hdg}(X)$ on $T(X)$ is $E$-linear, it preserves the eigendecomposition $T(X)_\R=\bigoplus_\s V_\s$ of Lemma \ref{lem: Hermitian form on V_s}. From this, one deduces that the group of real valued points of $\mathrm{Hdg}(X)$ decomposes as a product 
\[\mathrm{Hdg}(X)(\R)=\prod_{\s\colon F\hookrightarrow\R} U_\s(\R),\]
where $U_\s\coloneqq \mathrm{U}(V_\s,\varphi_\s)$.
Therefore, the invariants in $(\bigotimes^{\bullet}T(X)_\R)^{\mathrm{Hdg}(X)(\R)}$ can be expressed in terms of invariants in the spaces $(\bigotimes^{\bullet}V_\s)^{U_\s(\R)}$. 
\end{rmk}
By definition of CM field, $E=F(\rho)$ with $\rho^2\in F$ such that $\tilde{\s}(\rho^2)$ is negative for any embedding $\tilde{\s}\colon F\longhookrightarrow \R$. Fix $\s\colon F\longhookrightarrow \R$ an embedding, and let $\lambda\in \R$ such that $\s(\rho^2)=-\lambda^2$. As we have seen in Lemma \ref{lem: Hermitian form on V_s}, the field $E$ acts on $V_\s$. The action of $\frac{\rho}{\lambda}$ then induces a 
$\C$-vector space structure on $V_\s$. In particular, $V_\s$  is even-dimensional and there is a decomposition
$(V_\s)_\C=V^{1,0}\oplus V^{0,1}$ with $V^{1,0}\coloneqq \{v\in (V_\s)_\C\, |\, \rho (v)=i\lambda v\}$ and $V^{0,1}\coloneqq \{v\in (V_\s)_\C\, |\, \rho (v)=-i\lambda v\}$.
Let $\omega_\s$ be the $\C$-linear extension of the map $\mathrm{Im}\varphi_\s\colon V_\s\times V_\s\longrightarrow \R$. Since $\varphi_\s$ is non-degenerate and $E$-Hermitian, $\omega_\s$ is a non-degenerate symplectic form on $(V_\s)_\C$. 
\begin{lem}
\label{Lem: (V_s)_C=Y+Z}
The decomposition $(V_\s)_\C=V^{1,0}\oplus V^{0,1}$ of the $\C$-vector space $(V_\s)_\C$ is Lagrangian with respect to the symplectic form $\omega_\s$, i.e., 
\[\omega_\s|_{V^{1,0}\times V^{1,0}}=0\quad \text{and}\quad \omega_\s|_{V^{0, 1}\times V^{0,1}}=0.\]
Furthermore, $\omega_\s$ induces an isomorphism of complex vector spaces $V^{0,1}\simeq (V^{1,0})^*$.
\begin{proof}
The space $V^{1,0}$ is Lagrangian since for $v,v'\in V^{1,0}$ the following holds:
\[\omega_\s(v,v') =\frac{1}{\lambda^2 }\omega_\s(\rho (v),\rho (v'))=\frac{1}{\lambda^2 }\omega_\s(i\lambda v,i\lambda v')\\
=-\omega_\s(v,v'),\]
where the first equality follows from the fact that $\varphi$ is $E$-Hermitian. A similar argument proves that $V^{0,1}$ is Lagrangian. The second assertion then follows from the fact that $\omega_\s$ is non-degenerate.
\end{proof}
\end{lem}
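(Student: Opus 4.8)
The plan is to reduce both assertions to a single transformation rule for $\omega_\s$ under the operator $\rho$, and then to combine this rule with the eigenvalue description of $V^{1,0}$ and $V^{0,1}$. First I would record how $\varphi_\s$ behaves under $\rho$. Since $\rho/\lambda$ acts as the complex structure defining the $\C$-vector-space structure on $V_\s$, and $\varphi_\s$ is $\C$-Hermitian for this structure (linear in one variable, conjugate-linear in the other), applying $\rho$ to both arguments contributes the scalar $i\lambda$ in the linear slot and its conjugate $\overline{i\lambda}=-i\lambda$ in the conjugate-linear slot, so that $\varphi_\s(\rho v,\rho v')=(i\lambda)\overline{(i\lambda)}\,\varphi_\s(v,v')=\lambda^2\varphi_\s(v,v')$ for all $v,v'\in V_\s$. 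Taking imaginary parts gives the same relation for $\mathrm{Im}\,\varphi_\s$, and since both sides are $\C$-bilinear and $\rho$ extends $\C$-linearly, the identity $\omega_\s(\rho v,\rho v')=\lambda^2\,\omega_\s(v,v')$ holds for all $v,v'\in(V_\s)_\C$.

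The isotropy of the two summands then follows immediately. For $v,v'\in V^{1,0}$ one has $\rho v=i\lambda v$ and $\rho v'=i\lambda v'$, hence $\omega_\s(\rho v,\rho v')=(i\lambda)^2\omega_\s(v,v')=-\lambda^2\,\omega_\s(v,v')$; comparing with the identity above and using $\lambda\neq 0$ forces $\omega_\s(v,v')=0$, so $V^{1,0}$ is Lagrangian. The identical computation, now with $\rho v=-i\lambda v$, shows $V^{0,1}$ is Lagrangian. In both cases the decisive point is simply the sign produced by squaring the eigenvalue.

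For the duality statement I would argue from non-degeneracy of $\omega_\s$. Consider the map $V^{0,1}\rightarrow(V^{1,0})^*$ sending $v$ to the functional $\omega_\s(v,-)|_{V^{1,0}}$. If $v\in V^{0,1}$ pairs trivially with all of $V^{1,0}$, then, since $V^{0,1}$ is Lagrangian, it also pairs trivially with $V^{0,1}$, hence with all of $(V_\s)_\C=V^{1,0}\oplus V^{0,1}$; non-degeneracy of $\omega_\s$ then gives $v=0$, so the map is injective. As $\dim_\C V^{0,1}=\dim_\C V^{1,0}=\tfrac12\dim_\C(V_\s)_\C$, injectivity forces it to be an isomorphism.

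The final duality and the passage from $V_\s$ to its complexification are routine linear algebra; the only genuinely delicate point is the opening identity, where the two distinct roles of ``$i$''—the complex structure $\rho/\lambda$ on $V_\s$ and the scalar entering the complexification $(V_\s)_\C$—must be kept apart, so that the Hermitian conjugation contributes the factor $+\lambda^2$. Getting this bookkeeping right is what makes the sign comparison in the second paragraph come out as a genuine vanishing rather than a tautology, and everything else then follows formally.
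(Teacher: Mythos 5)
Your proposal is correct and follows essentially the same route as the paper: the central identity $\omega_\s(\rho v,\rho v')=\lambda^2\,\omega_\s(v,v')$ (which the paper invokes directly from $\varphi$ being $E$-Hermitian, and which you derive by tracking the factor $(i\lambda)\overline{(i\lambda)}$ through the two slots of the Hermitian form) is compared against the eigenvalue computation $(\pm i\lambda)^2=-\lambda^2$ to force the vanishing, and the duality is the standard non-degeneracy-plus-dimension-count argument that the paper leaves implicit. Your version merely makes explicit the bookkeeping and the injectivity step that the paper compresses into one sentence.
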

\begin{rmk}
\label{Rmk: subgroups of GL(V_s)}
As Hermitian forms are determined by their symplectic imaginary part, we deduce that $U_\s(\C)\simeq \mathrm{Sp}((V_\s)_\C,\C)$. Identifying these two groups, we can consider $U_\s(\C)$ as a subgroup of $\mathrm{GL}((V_\s)_\C,\C)$.
On the other hand, we can consider $\mathrm{GL}(V^{1,0},\C)$ as a subgroup of $\mathrm{GL}((V_\s)_\C,\C)$ via the embedding
\[
A\longmapsto
\begin{pmatrix}
A & 0\\
0 & A^*
\end{pmatrix}\in \mathrm{GL}(V^{1,0}\oplus(V^{1,0})^*,\C)\simeq\mathrm{GL}((V_\s)_\C,\C),
\]
where $A^*$ denotes the natural action of $A$ on $(V^{1,0})^*$.
\end{rmk}

\begin{lem}
\label{Lem: the action of S}
The groups $U_\s(\C)$ and $\mathrm{GL}(V^{1,0},\C)$ coincide when viewed as subgroups of $\mathrm{GL}((V_\s)_\C,\C)$ as in Remark \ref{Rmk: subgroups of GL(V_s)}.
\begin{proof}
Let $A\in \mathrm{GL}(V^{1,0},\C)$. By construction, $A$ sends an element $v\in V^{0,1}$ to the element of $V^{0,1}$ corresponding to the element $\omega_\s(A^{-1}(\cdot),v)\in (V^{1,0})^*$ via the isomorphism of Lemma \ref{Lem: (V_s)_C=Y+Z}. This implies that the action of $A$ on $(V_\s)_\C$ preserves the symplectic form $\omega_\s$, i.e., $A$ belongs to $U_\s(\C)$. To prove the other inclusion, note that the action of $U_\s(\C)$ is $E$-linear and hence preserves the direct sum decomposition $(V_\s)_\C=V^{1,0}\oplus V^{0,1}$.
Therefore, if $A$ is an element of $U_\s(\C)$, its restriction $A|_{V^{1,0}}$ is an invertible matrix on $V^{1,0}$. Then, via the isomorphism $\mathrm{GL}(V_\s,\C)\simeq \mathrm{GL}(V^{1,0}\oplus(V^{1,0})^*,\C)$, the matrix $A$ is sent to
$\begin{pmatrix}
A|_{V^{1,0}} & 0\\
0 & (A|_{V^{1,0}})^*
\end{pmatrix}.
$
This shows that $A\in\mathrm{GL}(V^{1,0},\C)$. As we have shown the double inclusion, we conclude that the two groups are the same.
\end{proof}
\end{lem}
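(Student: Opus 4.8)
The plan is to prove the equality by a double inclusion inside $\mathrm{GL}((V_\s)_\C,\C)$, using the two structural features of $U_\s(\C)$ that are already at hand. First, because $U_\s$ is the unitary group of the $\C$-Hermitian form $\varphi_\s$, every element of $U_\s(\C)$ acts $E$-linearly, i.e.\ commutes with the complex structure $\rho/\lambda$, and therefore preserves the eigenspace splitting $(V_\s)_\C=V^{1,0}\oplus V^{0,1}$. Second, preserving $\varphi_\s$ forces every element of $U_\s(\C)$ to preserve its imaginary part, hence the symplectic form $\omega_\s$. The point I would isolate at the outset is that, \emph{among $E$-linear maps}, preserving $\omega_\s$ is equivalent to membership in $U_\s(\C)$, since on $E$-linear maps the imaginary part $\omega_\s$ together with the complex structure recovers all of $\varphi_\s$.

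For the inclusion $\mathrm{GL}(V^{1,0},\C)\subseteq U_\s(\C)$ I would take $A\in\mathrm{GL}(V^{1,0},\C)$ and form its block-diagonal image $\mathrm{diag}(A,A^*)$ from Remark \ref{Rmk: subgroups of GL(V_s)}, where $A^*$ is the induced map on $(V^{1,0})^*\simeq V^{0,1}$ under the isomorphism of Lemma \ref{Lem: (V_s)_C=Y+Z}. Since $V^{1,0}$ and $V^{0,1}$ are Lagrangian, the form $\omega_\s$ is nothing but the canonical duality pairing between $V^{1,0}$ and $V^{0,1}$, and by the very definition of the dual action the pair $(A,A^*)$ preserves it; being moreover block-diagonal it is $E$-linear, so it lies in $U_\s(\C)$.

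For the reverse inclusion I would start from $g\in U_\s(\C)$. By $E$-linearity $g$ preserves each summand, so $g=\mathrm{diag}(B,C)$ with $B\in\mathrm{GL}(V^{1,0},\C)$ and $C\in\mathrm{GL}(V^{0,1},\C)$. Evaluating $\omega_\s(gv,gw)=\omega_\s(v,w)$ on pairs $v\in V^{1,0}$, $w\in V^{0,1}$ --- the only pairs on which $\omega_\s$ is non-zero, by the Lagrangian property --- pins down $C$ as the contragredient $B^*=(B^{-1})^{t}$ of $B$. Hence $g=\mathrm{diag}(B,B^*)$ is exactly the image of $B$, i.e.\ $g\in\mathrm{GL}(V^{1,0},\C)$, closing the double inclusion.

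The main obstacle I anticipate is conceptual rather than computational: one must resist identifying $U_\s(\C)$ with the full symplectic group $\mathrm{Sp}((V_\s)_\C,\omega_\s)$ and instead keep the $E$-linearity constraint in force throughout, since it is precisely this constraint that cuts the symplectic group down to its Siegel Levi factor $\mathrm{GL}(V^{1,0},\C)$. Concretely, the care needed is in matching conventions so that the map $A^*$ defining the embedding in Remark \ref{Rmk: subgroups of GL(V_s)} is literally the transpose-inverse $(B^{-1})^{t}$ forced by the symplectic condition; once this bookkeeping is fixed both inclusions are immediate, and the dimension count $\dim_\C \mathrm{GL}(V^{1,0},\C)=(\dim_\C V^{1,0})^2$ confirms that the answer is the Levi factor rather than the whole of $\mathrm{Sp}$.
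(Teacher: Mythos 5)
Your proof is correct and follows essentially the same route as the paper: both inclusions are established by using $E$-linearity to reduce to block-diagonal maps $\mathrm{diag}(B,B^*)$ and the Lagrangian duality between $V^{1,0}$ and $V^{0,1}$ under $\omega_\s$ to identify the second block as the contragredient of the first. Your explicit insistence that the $E$-linearity constraint be kept in force (so that one lands in the Levi factor $\mathrm{GL}(V^{1,0},\C)$ rather than all of $\mathrm{Sp}$) is a welcome clarification of the paper's Remark \ref{Rmk: subgroups of GL(V_s)}, but it does not change the argument.
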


We are now able to prove Theorem \ref{Thm: invariant degree 2 elements generate in the CM case}:
\begin{proof}[Proof of Theorem \ref{Thm: invariant degree 2 elements generate in the CM case}]
By the previous discussion and Remark \ref{rmk.: unitary group}, we just need to prove that any element in $(\bigotimes^{\bullet}(V_\s)_\C)^{U_\s(\C)}$ can be expressed in terms of elements of degree two. By Lemma \ref{Lem: (V_s)_C=Y+Z}, this algebra can be identified with $(\bigotimes^{\bullet}(V^{1,0}\oplus
(V^{1,0})^*))^{\mathrm{GL}(V^{1,0},\C)}$.
Given $f\in \left(\bigotimes^p(V^{1,0}\oplus (V^{1,0})^*)\right)^{\mathrm{GL}(V^{1,0},\C)}$ an invariant element of degree $p$, view it as a map
\[f\colon((V^{1,0})^*\oplus V^{1,0})\otimes\cdots\otimes ((V^{1,0})^*\oplus V^{1,0})\longrightarrow \C.\]
Decomposing $f$ as a sum $\sum_i f_i$ where each $f_i$ is a function $W_1\otimes\cdots\otimes W_p\longrightarrow \C$, with $W_j=V^{1,0}$ or $(V^{1,0})^*$ for all $j$, we see that it suffices to show that each $f_i$ can be expressed in terms of invariant elements of degree two.
By construction, up to permuting its factors, \[f_i\in\left(\mathrm{Hom}((V^{1,0})^{\otimes r}\otimes ((V^{1,0})^*)^{\otimes s},\C)\right)^{\mathrm{GL}(V^{1,0},\C)}\] for some $r$ and $s$ with $r+s=p$.
According to the fundamental theorem of invariants for $\mathrm{GL}(V^{1,0},\C)$ as presented in \cite[Thm.\ 4.2]{kraft1996classical}, the ring \[\left(\mathrm{Hom}((V^{1,0})^{\otimes r}\otimes ((V^{1,0})^*)^{\otimes s},\C)\right)^{\mathrm{GL}(V^{1,0},\C)}\] is non-trivial if and only if $r=s$ and in this case it is generated by complete contractions, i.e., maps of the form
\[(V^{1,0})^{\otimes s}
\otimes ((V^{1,0})^*)^{\otimes s}\longrightarrow\C, \quad  v_1\otimes\cdots\otimes v_s\otimes \mu_1\otimes\cdots\otimes\mu_s \longmapsto\textstyle\prod_i \mu_i(v_{\sigma(i)}),\]
for some $\sigma\in \mathfrak{S}_s$.
This statement of invariant theory can be found also in \cite[Ch.\ III]{h.weyl2016classical}. 
This concludes the proof of Theorem \ref{Thm: invariant degree 2 elements generate in the CM case}: Indeed, any complete contraction can be written (up to permuting its factors) as a tensor product of complete contractions of degree two: \[V^{1,0}\otimes (V^{1,0})^*\longrightarrow \C,\quad v\otimes \mu\longmapsto \mu(v),\]
which are invariant.
\end{proof}

\subsection{K3 surfaces with a totally real endomorphism field}
Let $X$ be a complex, projective K3 surface with totally real endomorphism field $E= \mathrm{End}_{\mathrm{Hdg}(X)}(T(X))$. In this case, the Rosati involution is the identity, and, for every $e\in E$ and every $v, w\in T(X)$, we have that
\[\psi(ev,w)=\psi(v,ew).\]
As in Lemma \ref{Lem: varphi CM K3 surface}, one shows that there exists a non-degenerate symmetric $E$-bilinear map $\varphi\colon T(X)\times T(X)\longrightarrow E$ such that $\psi(v,w)=\mathrm{Tr}_{E/\Q}(\varphi(v,w))$. Note that this time $\varphi$ is $E$-bilinear since $E$ is totally real. Moreover, by \cite[Thm.\ 2.2.1]{zarhin1983hodge}, the following equality holds
\[
\mathrm{Hdg}(X)=\mathrm{Res}_{E/\Q}(\mathrm{SO}(T(X),\varphi)).
\]
Similarly to Lemma \ref{lem: Hermitian form on V_s}, we have the following result:
\begin{lem}
\label{lem: Symmetric form on V_s}
The complex vector space $T(X)_\C\coloneqq T(X)\otimes_{\Q}\C$ decomposes as a direct sum 
\[T(X)_\C=\displaystyle\bigoplus_{\sigma\colon E\hookrightarrow \C}V_\s,\]
where $V_\s$ are complex vector spaces with an $E$-action. Moreover, the action of $\mathrm{Hdg}(X)$ preserves this decomposition, and the $E$-bilinear symmetric form $\varphi$ induces a non-degenerate $\C$-bilinear symmetric form $\varphi_\s$ on $V_\s$ for any $\s$. 
\qed
\end{lem}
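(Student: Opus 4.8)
The plan is to transcribe the proof of Lemma~\ref{lem: Hermitian form on V_s}, exploiting two simplifications afforded by the totally real hypothesis: the form $\varphi$ is now $E$-bilinear and symmetric rather than Hermitian, and one may pass directly to $\C$ instead of factoring through $\R$ first, since every embedding $\s\colon E\hookrightarrow\C$ already has image in $\R$.

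First I would use that $T(X)$ is a free $E$-module of rank $r=\dim_E T(X)$, so that $T(X)_\C$ is free over $E\otimes_\Q\C$. Combined with the ring isomorphism
\[
E\otimes_\Q\C\xrightarrow{\ \sim\ }\prod_{\s\colon E\hookrightarrow\C}\C,\qquad e\otimes z\mapsto(\s(e)z)_\s,
\]
which is an isomorphism because $E/\Q$ is separable, this yields $T(X)_\C=\bigoplus_\s V_\s$ with $V_\s\coloneqq T(X)\otimes_{E,\s}\C\simeq\{v\in T(X)_\C\mid f(v)=\s(f)v\text{ for all }f\in E\}$. On $V_\s$ the field $E$ acts through $\s$, i.e.\ by the scalar $\s(f)\in\C$, and this is the asserted $E$-vector space structure. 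Because $\mathrm{Hdg}(X)=\mathrm{Res}_{E/\Q}(\mathrm{SO}(T(X),\varphi))$ acts $E$-linearly on $T(X)$, it commutes with this action and therefore preserves each eigenspace $V_\s$.

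Next I would extend $\varphi$ to a $\C$-bilinear map $\varphi_\C\colon T(X)_\C\times T(X)_\C\to E\otimes_\Q\C$ and define $\varphi_\s$ as the composite $V_\s\times V_\s\xrightarrow{\varphi_\C}E\otimes_\Q\C\to\C$, the last arrow being the projection onto the $\s$-factor; symmetry of $\varphi_\s$ is inherited from that of $\varphi$. To see that the decomposition is $\varphi$-orthogonal, for distinct embeddings $\s\neq\widetilde{\s}$ I would choose $f\in E$ with $\s(f)\neq\widetilde{\s}(f)$ and compute, for $v\in V_\s$ and $w\in V_{\widetilde{\s}}$,
\[
\s(f)\,\varphi_\C(v,w)=\varphi_\C(f v,w)=\varphi_\C(v,f w)=\widetilde{\s}(f)\,\varphi_\C(v,w),
\]
where the middle equality uses that the Rosati involution is trivial; this forces $\varphi_\C(v,w)=0$. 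The same character computation shows that for $v,w\in V_\s$ the value $\varphi_\C(v,w)$ is concentrated in the $\s$-factor, whence the $\C$-bilinear extension $\psi_\C$ of the polarization $\psi=\mathrm{Tr}_{E/\Q}\circ\varphi$ restricts on $V_\s\times V_\s$ to exactly $\varphi_\s$ and vanishes across distinct summands. Since $\psi_\C$ is non-degenerate, each $\varphi_\s$ is non-degenerate, which completes the proof.

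As every step is a literal translation of the Hermitian case with conjugation removed, I do not anticipate a genuine obstacle; the only point demanding care — and the natural place to slip — is recording that over $\C$ the $E$-action on $V_\s$ is mere scalar multiplication through $\s$, carrying no information beyond the complex structure, in contrast to the CM setting where the imaginary generator of the field endowed the analogous eigenspaces with a genuine complex structure.
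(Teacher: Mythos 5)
Your proposal is correct and follows exactly the route the paper intends: the lemma is stated with a \qed precisely because its proof is the verbatim adaptation of Lemma \ref{lem: Hermitian form on V_s} that you carry out, with the Rosati involution trivial and the eigenspace decomposition taken directly over $\C$. Your closing observation that the $E$-action on $V_\s$ is now mere scalar multiplication through $\s$, and your identification of $\varphi_\s$ with the restriction of $\psi_\C$ to prove non-degeneracy, are both accurate and fill in the details the paper leaves implicit.
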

\begin{rmk}
Note that the decomposition of the transcendental lattice into $E$-eigenspaces of Lemma \ref{lem: Symmetric form on V_s} already holds over $\R$ as $E$ is totally real. We extended scalars directly to $\C$ to introduce as little notation as necessary.
\end{rmk}
Similarly to the CM case, we deduce from the isomorphsim
\[
\textstyle(\bigotimes^{\bullet}T(X))^{\mathrm{Hdg}(X)}\otimes_\Q\C\simeq \textstyle(\bigotimes^{\bullet}T(X)_\C)^{\mathrm{Hdg}(X)(\C)}
\]
that the invariants in $\textstyle(\bigotimes^{\bullet}T(X)_\C)^{\mathrm{Hdg}(X)(\C)}$ can be expressed in terms of invariants in the spaces $\left(\bigotimes^{\bullet}V_\s\right)^{\mathrm{SO(V_\s,\C)}}$. 
The complex vector space $\bigwedge^{\dim V_\s}V_\s\subseteq V_\s^{\otimes\dim V_\s}$ is one-dimensional and thus determines a unique element up to a complex scalar, denote it by $\det V_\s$. Note that $\det V_\s$ is invariant under the action of $\mathrm{SO}(V_\s,\C)$.
With this notation and with the convention of Definition \ref{def.: meaning of expressed in terms of}, the following holds:
\begin{thm}\cite[Thm.\ 10.2]{kraft1996classical}
\label{Thm: invariants of SO(n)}
Let $V_\s$ be a complex vector space endowed with a non-degenerate symmetric $\C$-bilinear form.
Then, the $\mathrm{SO}(V_\s,\C)$-invariants in $\bigotimes^{\bullet}V_\s$ can be expressed in terms of $\mathrm{SO}(V_\s,\C)$-invariants of degree two and $\det V_\s$.
\begin{proof}
To deduce this statement from the one in the reference, just note that, for any positive integer $k$, any element in $V_\sigma^{\otimes k}$ can be viewed as a homogeneous polynomial function $(V_\s^*)^{\oplus k}\longrightarrow \C$ of degree $k$: For example, the map $(V_\s^*)^{\oplus k}\longrightarrow \C$ which corresponding to a decomposable element $v_1\otimes\cdots\otimes v_k\in V_\sigma^{\otimes k}$ is $\mu_1\otimes\cdots\otimes \mu_n\longmapsto \prod \mu_i(v_i)$.
\end{proof}
\end{thm}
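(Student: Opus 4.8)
The plan is to deduce the statement from the first fundamental theorem (FFT) of invariant theory for the special orthogonal group, which is exactly the content cited as \cite[Thm.\ 10.2]{kraft1996classical}; the work is then to set up the dictionary between invariant tensors and invariant functions so that the FFT applies, and to match the two families of generators it produces with ``degree-two invariants'' and $\det V_\s$.

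First I would use the non-degenerate symmetric form $\varphi_\s$ to fix an isomorphism $V_\s\simeq V_\s^*$ of $\mathrm{SO}(V_\s,\C)$-representations. Under this isomorphism a tensor $t\in V_\s^{\otimes k}$ is the same datum as a multilinear map $V_\s^{\otimes k}\rightarrow\C$, i.e.\ an element of $(V_\s^*)^{\otimes k}$, and $t$ is $\mathrm{SO}(V_\s,\C)$-invariant if and only if the associated function is. Thus it suffices to describe, for every $k$, the invariant multilinear functions on $V_\s^{\oplus k}$. In characteristic zero these are exactly the multilinear (multihomogeneous of degree one in each argument) components of general invariant polynomial functions on $V_\s^{\oplus m}$, by the polarization--restitution principle; so describing them reduces to describing the full invariant ring $\C[V_\s^{\oplus m}]^{\mathrm{SO}(V_\s,\C)}$ for all $m$.

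Next I would apply the FFT for $\mathrm{SO}(n)$ over the algebraically closed field $\C$, where $n=\dim_\C V_\s$. Fixing a $\varphi_\s$-orthonormal basis identifies $\mathrm{SO}(V_\s,\C)$ with $\mathrm{SO}(n,\C)$, and \cite[Thm.\ 10.2]{kraft1996classical} asserts that $\C[V_\s^{\oplus m}]^{\mathrm{SO}(V_\s,\C)}$ is generated as an algebra by the quadratic invariants $\varphi_\s(v_i,v_j)$ and by the bracket invariants $[v_{i_1},\ldots,v_{i_n}]$, the latter being the $n\times n$ determinants formed from $n$ of the chosen vectors in that basis.

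Finally I would translate the generators back into tensors and extract multilinear parts. A quadratic invariant corresponds, under $V_\s\simeq V_\s^*$, to the degree-two invariant in $V_\s^{\otimes 2}$ dual to $\varphi_\s$ (equal to $\sum_l e_l\otimes e_l$ in an orthonormal basis $\{e_l\}$), while a bracket invariant corresponds to the generator of the one-dimensional space $\bigwedge^{n}V_\s\subseteq V_\s^{\otimes n}$, i.e.\ to $\det V_\s$ up to scalar. An invariant multilinear function is then a linear combination of products of these generators in which each of the $k$ slots occurs exactly once; since multiplication of functions matches, up to permuting tensor factors, the tensor product of the corresponding invariant tensors, this says precisely that any invariant in $\bigotimes^\bullet V_\s$ is a sum of tensor products of degree-two invariants and copies of $\det V_\s$, in the sense of Remark \ref{rmk.: meaning of expressed in terms of}. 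The only deep input is the FFT itself, taken from the reference; the genuine subtlety, and the reason $\det V_\s$ appears at all, is that passing from $\mathrm{O}(n)$ to $\mathrm{SO}(n)$ introduces the bracket generators, which the quadratic invariants alone cannot produce and which are captured exactly by $\det V_\s$.
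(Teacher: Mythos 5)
Your proposal is correct and follows essentially the same route as the paper: both reduce the statement to the first fundamental theorem for $\mathrm{SO}(n)$ in the cited reference by identifying invariant tensors with invariant (multilinear) polynomial functions, and both match the quadratic generators with degree-two invariants and the bracket generators with $\det V_\s$. You simply spell out the polarization/restitution step and the self-duality $V_\s\simeq V_\s^*$ that the paper leaves implicit.
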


By Theorem \ref{Thm: invariants of SO(n)}, we see that, unlike the CM case, if the endomorphism field is totally real, there are Hodge classes in $\bigotimes^{\bullet}T(X)$ which cannot be expressed in terms of Hodge classes of degree two. To describe the additional classes needed to generate $\left(\bigotimes^{\bullet}T(X)\right)^{\mathrm{Hdg}(X)}$, let us recall the following result from linear algebra:
\begin{lem}\cite[Lem.\ 4.3]{deligne1982hodge}
\label{Lem.: wedge and field extensions}
Let $k$ be a field, and let $V$ be a vector space of finite dimension over a finite separable field extension $k'$ of $k$. Then, for any integer $r$, there is a natural embedding of $k$ vector spaces:
\[
\textstyle\bigwedge^r_{k'} V\longhookrightarrow \bigwedge^r_{k}V.
\]
\begin{proof}
For later use, we recall the construction of the embedding.
By Lemma \ref{Lem: Trace}, the trace map induces an isomorphism of $k$-vector spaces
\[\mathrm{Hom}_{k'}(V,k')\simeq \mathrm{Hom}_k(V,k),\quad f\longrightarrow\mathrm{Tr}_{k'/k}\circ f.\]
This induces a natural map
\[
\kappa\colon\textstyle\bigwedge_k^{\bullet}\mathrm{Hom}_k(V,k)\longrightarrow\bigwedge_{k'}^{\bullet}\mathrm{Hom}_{k'}(V,k').
\]
The dual of $\kappa$ as a map of $k$-vector spaces gives a map
\[
\kappa^*\colon\textstyle\mathrm{Hom}_k\left(\bigwedge_{k'}^{\bullet}\mathrm{Hom}_{k'}(V,k'),k\right)\longrightarrow\mathrm{Hom}_k\left(\bigwedge_{k}^{\bullet}\mathrm{Hom}_{k}(V,k),k\right).\]
The desired embedding is then the composition
\begin{align*}
    \textstyle\bigwedge^{\bullet}_{k'} V&\textstyle\stackrel{\simeq}{\longrightarrow} \mathrm{Hom}_{k'}\left(\bigwedge_{k'}^{\bullet}\mathrm{Hom}_{k'}(V,k'),k'\right)\xrightarrow{\mathrm{Tr}_{k'/k}} \mathrm{Hom}_{k}\left(\bigwedge_{k'}^{\bullet}\mathrm{Hom}_{k'}(V,k'),k\right)\\
    &\textstyle\stackrel{\kappa^*}{\longrightarrow}\mathrm{Hom}_k\left(\bigwedge_{k}^{\bullet}\mathrm{Hom}_{k}(V,k),k\right)\stackrel{\simeq}{\longrightarrow}\bigwedge^{\bullet}_{k}V.\qedhere
\end{align*}
\end{proof}
\end{lem}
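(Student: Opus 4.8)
The plan is to realize the desired map as the $k$-linear dual of an obvious natural surjection, so that injectivity becomes essentially automatic, and then to make the trace-identifications explicit by a faithfully flat base change. There is a tautological $k$-linear surjection
\[
\textstyle p\colon\bigwedge^r_k V\twoheadrightarrow\bigwedge^r_{k'}V,\qquad v_1\wedge_k\cdots\wedge_k v_r\longmapsto v_1\wedge_{k'}\cdots\wedge_{k'}v_r,
\]
coming from the fact that the $k'$-exterior power is the further quotient of the $k$-exterior power in which one also imposes $k'$-multilinearity. Dualizing $p$ over $k$ and using the self-dualities $\bigwedge^r_k V^{*_k}\simeq(\bigwedge^r_k V)^{*_k}$, $\bigwedge^r_{k'}V^{*_{k'}}\simeq(\bigwedge^r_{k'}V)^{*_{k'}}$ together with the trace isomorphism $\mathrm{Hom}_k(k',k)\simeq k'$ of Lemma \ref{Lem: Trace} (which identifies the $k$-dual of a $k'$-space with its $k'$-dual) reproduces, up to the canonical identifications, exactly the trace-built composite of the construction above, now presented as $p^{*}$. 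Since the $k$-dual of a surjection of finite-dimensional $k$-vector spaces is injective, this already delivers the embedding; the only thing left is to confirm that the trace-identifications are the correct ones, and for this I would verify everything after base change.

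Concretely, pass to an algebraic closure (or any splitting field) $\overline{k}$ of $k$. Separability gives $k'\otimes_k\overline{k}\simeq\prod_{\tau}\overline{k}$ over the $d=[k':k]$ embeddings $\tau\colon k'\hookrightarrow\overline{k}$, hence $V\otimes_k\overline{k}=\bigoplus_\tau V_\tau$ with $V_\tau:=V\otimes_{k',\tau}\overline{k}$. Because an exterior power over a product ring splits as the sum of the exterior powers over the factors, the source base-changes diagonally,
\[
\textstyle\big(\bigwedge^r_{k'}V\big)\otimes_k\overline{k}\simeq\bigoplus_\tau\bigwedge^r_{\overline{k}}V_\tau,
\]
whereas the target spreads over all multidegrees,
\[
\textstyle\big(\bigwedge^r_k V\big)\otimes_k\overline{k}\simeq\bigwedge^r_{\overline{k}}\big(\textstyle\bigoplus_\tau V_\tau\big)\simeq\bigoplus_{\sum_\tau r_\tau=r}\ \bigotimes_\tau\bigwedge^{r_\tau}_{\overline{k}}V_\tau.
\]
Tracking $p$ through these identifications, one checks on the spanning decomposables $(v_1\otimes1)\wedge\cdots\wedge(v_r\otimes1)$ that $\overline p$ is simply the projection onto the \emph{concentrated} summands, those with $r_\tau=r$ for a single $\tau$ and $r_{\tau'}=0$ otherwise. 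Dually, the base change of $p^{*}$ is the inclusion of these concentrated summands as direct summands of the target, which is patently injective; since $k\hookrightarrow\overline{k}$ is faithfully flat, injectivity descends to the original map over $k$. Naturality in $V$ is clear, as every constituent is.

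I expect the only real obstacle to be precisely this bookkeeping: verifying that under the splitting $k'\otimes_k\overline{k}\simeq\prod_\tau\overline{k}$ the $k'$-wedge base-changes to the diagonal sum $\sum_\tau v_{1,\tau}\wedge\cdots\wedge v_{r,\tau}$ while the $k$-wedge expands over every multidegree, so that $\overline p$ is literally the projection onto concentrated summands—equivalently, that the trace, which becomes coordinate-summation $\sum_\tau$ after the splitting, introduces no vanishing scalar on any $\tau$-block. Once this identification is pinned down, both the embedding and its naturality are immediate.
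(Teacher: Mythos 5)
Your construction is correct and is essentially the paper's own proof viewed through the other end of the duality: the paper's map $\kappa$ is exactly your surjection $p$ applied to the trace-identified dual $\mathrm{Hom}_{k'}(V,k')\simeq\mathrm{Hom}_k(V,k)$, and the embedding in both cases is the $k$-dual of that surjection. Your base-change verification over $\overline{k}$ (identifying the image with the ``concentrated'' summands $\bigwedge^r_{\overline{k}}V_\tau$) is also sound and in fact anticipates the computation the paper carries out immediately after the lemma.
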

Let $r\coloneqq \dim_E T(X)=\dim V_\s$. Applying Lemma \ref{Lem.: wedge and field extensions} with $V=T(X)$, $k'=E$, and $k=\Q$, we get the following embedding
\[
\textstyle\bigwedge^r_{E}T(X)\longhookrightarrow \bigwedge^r T(X).
\]
Since the right-hand side embeds naturally into $T(X)^{\otimes r},$ we can consider $\bigwedge^r_{E}T(X)$ as a vector subspace of $T(X)^{\otimes r}.$ Then, using the isomorphism $E\otimes_\Q \C\simeq\prod_{\sigma}\C$, one sees that the $\C$-linear span of this vector subspace is 
\[
\textstyle(\bigwedge^r_{E}T(X))\otimes_\Q\C\simeq\bigwedge^r_{E\otimes_\Q \C}T(X)_\C\simeq \bigoplus_\sigma (\bigwedge^r_\C V_\s) \simeq \langle \det V_\s\rangle_\s,
\]
where $\det V_\s$ are as in Theorem \ref{Thm: invariants of SO(n)}. For a similar computation, see \cite[Prop.\ 4.4]{deligne1982hodge}. 
Since $\det V_\s$ are invariant under the action of $\mathrm{Hdg}(X)(\C)$, we deduce that the vector subspace $\bigwedge^r_{E}T(X)$ consists of Hodge classes, we call them \textit{the exceptional Hodge classes}. 
As a consequence of Theorem \ref{Thm: invariants of SO(n)}, we conclude the following:
\begin{thm}
\label{generators in the case of totally real K3}
Let $X$ be a K3 surface with totally real endomorphism field $E$. Then, any Hodge class in $\bigotimes^{\bullet}T(X)$ can be expressed in terms of Hodge classes of degree two and the exceptional Hodge classes in $T(X)^{\otimes r},$ where $r\coloneqq \dim_E T(X)$.
\end{thm}

The name \textit{exceptional classes} alludes to the fact that those classes cannot be expressed in terms of Hodge classes in the square of the transcendental lattice. This is a consequence of the following fact: Let $V_\s$ be as in Theorem \ref{Thm: invariants of SO(n)}. The $\mathrm{SO}(V_\s,\C)$-invariant elements in $V_\s\otimes V_\s$ are also $\mathrm{O}(V_\s,\C)$-invariant, as they are complete contractions. On the other hand, the class $\det V_\s$ is just $\mathrm{SO}(V_\s,\C)$-invariant and thus cannot be expressed in terms of degree-two elements. In the literature, the name \textit{exceptional classes} is used in the context of abelian varieties and denotes Hodge classes which are not in the algebra generated by (intersection of) divisor classes.
\begin{rmk}
The mistake in the proof of \cite[Prop.\ 5.2]{mari2008hodge} in the case of K3 surfaces with totally real multiplication is the wrong assumption that any invariant under the action of the special orthogonal group can be expressed in terms of invariants of degree two.
\end{rmk}

In conclusion, if the endomorphism field $E$ of the K3 surface is totally real, then there are $|E:\Q|$ Hodge classes in $T(X)^{\otimes r}$, with $r=\dim_E T(X)$, which cannot be expressed in terms of Hodge classes in $T(X)^{\otimes 2}$. Thus, in this case, it is not true that the Hodge conjecture for the second power of the K3 surface implies the Hodge conjecture for all its powers.
\subsection{A motivic approach}
Inspired by the techniques in the paper \cite{milne2021tate}, we give a different proof of the results of the previous section.

\smallskip

Let $X$ be a K3 surface and denote by $E$ its endomorphism field. Similarly to the case of abelian varieties, we give the following definition:
\begin{defn}
The \textit{Lefschetz group} of $X$ is the biggest algebraic subgroup of $\mathrm{GL}(T(X))$ which preserves the Hodge classes in $T(X)\otimes T(X)$. We denote it by $\mathrm{L}(X)$.
\end{defn}
Assume that $E$ is a CM field and let $F$ be its maximal totally real subfield. Denoting by $\varphi\colon T(X)\times T(X)\longrightarrow E$ the $E$-Hermitian map of Lemma \ref{Lem: varphi CM K3 surface}, we see that  
\[L(X)\simeq \mathrm{Res}_{F/\Q} \mathrm{U}(T(X), \varphi),\] where we regard $\mathrm{U}(T(X), \varphi)$ as an algebraic group over $\Q$ using the Weil restriction. Note that in this case, $\mathrm{L}(X)$ and $\mathrm{Hdg}(X)$ coincide.
Similarly, if $E$ is a totally real field and $\varphi\colon T(X)\times T(X)\longrightarrow E$ is the $E$-bilinear form on $T(X)$, the Lefschetz group of $X$ is \[\mathrm{L}(X)\simeq \mathrm{Res}_{E/\Q} \mathrm{O}(T(X), \varphi).\]
In particular, by the invariant theory results of previous section, the algebra
$
\textstyle\left(\bigotimes^\bullet T(X)\right)^{\mathrm{L}(X)}
$
is generated by its degree two elements both in the totally real and in the CM case.
\begin{defn}
The \textit{motivic group} $\mathrm{M}(X)$ is the biggest algebraic subgroup of $\mathrm{GL}(T(X))$ which preserves all the algebraic classes in $\bigotimes^\bullet T(X)$.
\end{defn}
In this case, the fact that only the algebraic classes in $\bigotimes^\bullet T(X)$ are invariant under the action of ${\mathrm{M}(X)}$ is non-trivial: As in the case of abelian varieties \cite[App. A]{milne2021tate}, it follows from the fact that the category of motives on a variety is a Tannakian category if the K\"{u}nneth components of the diagonal are algebraic \cite[Cor.\ 3]{jannsen1992motives}.

\smallskip
Since all algebraic classes are Hodge classes, there is an inclusion
$\mathrm{Hdg}(X)\subseteq \mathrm{M}(X)$. Then, proving the Hodge conjecture for the powers of $X$ is equivalent to prove that this inclusion is an equality.

\smallskip

If we assume the Hodge conjecture for $X^2$, i.e., that the classes in $E\simeq \left(T(X)\otimes T(X)\right)^{\mathrm{Hdg}(X)}$ are algebraic, we have the following chain of inclusions:
\begin{equation}
    \label{eq inclusions of 3 groups}
    \mathrm{Hdg}(X)\subseteq \mathrm{M}(X)\subseteq \mathrm{L}(X).
\end{equation}
In the case of K3 surfaces with CM endomorphism field, the Hodge group and the Lefschetz group are equal, all the inclusions in $(\ref{eq inclusions of 3 groups})$ are therefore equalities. This gives another proof of Theorem \ref{Thm: invariant degree 2 elements generate in the CM case}, and shows that, in the case of CM endomorphism field, the Hodge conjecture for $X^2$ implies the Hodge conjecture for all powers of $X$.

\smallskip

As we saw in the previous section, in the case of K3 surfaces with a totally real endomorphism field, the situation is different: Even if we assume the Hodge conjecture for the square we cannot conclude the Hodge conjecture for all powers of the given K3 surface due to the presence of the exceptional Hodge classes. From the group perspective, this is a consequence of the fact  
that the Hodge group and the Lefschetz group are not equal, so $(\ref{eq inclusions of 3 groups})$ does not imply the equality between the Hodge and the motivic group. Assuming the Hodge conjecture for the square of the K3 surface, let us find which are the possibilities for the motivic group:
Let $X$ be a K3 surface with totally real endomorphism field $E$ and let us take the complex points in $(\ref{eq inclusions of 3 groups})$
\[
\prod_{\sigma\colon E\hookrightarrow \C}\mathrm{SO}(V_\s,\C)
\subseteq \mathrm{M}(X)(\C)\subseteq
\prod_{\sigma\colon E\hookrightarrow \C}\mathrm{O}(V_\s,\C).
\]
We see that $\mathrm{M}(X)(\C)$ corresponds to a subgroup of the quotient 
\[
\left(\prod_{\sigma\colon E\hookrightarrow \C}\mathrm{O}(V_\s,\C)\right)/\left(\prod_{\sigma\colon E\hookrightarrow \C}\mathrm{SO}(V_\s,\C)\right)\simeq (\mathbb{Z}/2\mathbb{Z})^{|E:\Q|}.
\]
From the fact that $\mathrm{M}(X)$ is defined over $\Q$, there are constraints on the possibilities for $\mathrm{M}(X)(\C)$:
Let $\Bar{E}$ be the Galois closure of the field $E$ and let $G\coloneqq \mathrm{Gal}(\Bar{E}/\Q)$ be the Galois group of $\Bar{E}$ over $\Q$, then $G$ acts on $
(\mathbb{Z}/2\mathbb{Z})^{|E:\Q|}$ by changing the factors.
As $\mathrm{M}(X)$ is defined over $\Q$, the group $\mathrm{M}(X)(\C)$ corresponds to a subgroup of $(\mathbb{Z}/2\mathbb{Z})^{|E:\Q|}$ which is preserved under the $G$-action. 
This allows us to conclude that 
\[\textstyle\mathrm{M}(X)(\C)/\left(\prod
_{\sigma}\mathrm{SO}(V_\s,\C)\right)\] is either trivial, the full group $(\mathbb{Z}/2\mathbb{Z})^{|E:\Q|}$, or the subgroup of $(\mathbb{Z}/2\mathbb{Z})^{|E:\Q|}$ generated by $(1,\ldots,1)$. 

\smallskip

In the first case, $\mathrm{M}(X)$ coincides with $\mathrm{Hdg}(X)$ and hence the Hodge conjecture holds for all powers of $X$. In the second case, $\mathrm{M}(X)$ coincides with $\mathrm{L}(X)$ and only
the classes in the algebra generated by the Hodge classes in $T(X)\otimes T(X)$ are algebraic. Finally, in the latter case, 
\[
\textstyle\mathrm{M}(X)(\C)=\{A_1\times\cdots \times A_k\in \prod_\s \mathrm{O}(V_\s,\C)|\prod_i\det(A_i)=1\}.
\]
This corresponds to the case where the class $\det T(X)$ is algebraic but the exceptional classes in $\bigwedge^r_E T(X)$ are not.
Showing the Hodge conjecture for all powers of $X$ is then equivalent to exclude the two latter cases. In particular, note that to prove the Hodge conjecture for all powers of a K3 surface $X$ it suffices to prove it for $X^2$ and then show that there exists an algebraic class in in $\bigwedge^r_E T(X)$. Indeed, this would force the motivic group to be equal to the Hodge group by the above discussion.
\begin{rmk}
The fact that we can exclude some possibilities for $\mathrm{M}(X)(\C)$ 
is linked with the fact that $\left(\bigotimes^\bullet T(X)_\C\right)^{\mathrm{M}(X)(\C)}$ is defined over $\Q$: For example, assume that $\mathrm{M}(X)(\C)$ corresponds to the subgroup of $(\mathbb{Z}/2\mathbb{Z})^{|E:\Q|}$ generated by $(0,1,\ldots,1)$, i.e. \[\mathrm{M}(X)(\C)\simeq \mathrm{SO}(V_{\sigma_1},\C)\times \mathrm{O}(V_{\sigma_2},\C)\times \cdots \times \mathrm{O}(V_{\sigma_k},\C).\] 
Then $\left(\bigwedge^r_E T(X)\right)^{\mathrm{M}(X)}\otimes_\Q\C$ is one-dimensional and generated by $\det V_{\s_1}$. In particular, up to complex scalar, $\det V_{\s_1}$ is a rational cohomology class. To see that this cannot happen, consider the action of $E$ on the first factor of $T(X)^{\otimes r}$. As any element of $E$ acts on $\det V_{\s_1}$ by multiplication by its image via $\sigma_1\colon E\longhookrightarrow \C$, the class $\det V_{\s_1}$ cannot be rational.
\end{rmk}
\begin{rmk}
Note that this motivic approach is equivalent to the approach we presented. It might seem easier due to the fact that we implicitly used in more places the results proved above. For example, to show that the algebra of invariant classes under the Lefschetz group is generated by degree-two elements, we used the invariant theory of the previous section.
\end{rmk}

\subsection{The Hodge conjecture and deformations of K3 surfaces}
Let $X$ be a K3 surface and let $n\coloneqq \dim_\Q T(X)$. Let $\det T(X)$ be a generator of the one-dimensional vector space
\[
\textstyle
\bigwedge^{n}T(X)\subseteq T(X)^{\otimes n}
\]
Note that $\det T(X)$ is a Hodge class for every K3 surface $X$. 
If the endomorphism field of $X$ is a CM field, by Theorem \ref{Thm: invariant degree 2 elements generate in the CM case}, the class $\det T(X)$ can be expressed in terms of $(2,2)$-classes. On the other hand, if the endomorphism field is $\Q$ or any other totally real field, $\det T(X)$ cannot be expressed in terms of Hodge classes of degree two.

\smallskip

In this section, we prove that the property \textit{the determinant of the transcendental lattice is algebraic} is a closed property in families of K3 surfaces:

\begin{prop}
\label{Prop.: determinant is algebraic is a closed property in families}
Let $\X\longrightarrow S$ be a family of K3 surfaces such that $\det T(\X_s)$ is algebraic for general $s\in S$. Then, the same holds for all $s\in S$.
\begin{proof}
We may assume that the transcendental lattices of the general fibres of $\X\longrightarrow S$ are isometric to a given quadratic space $\Tilde{T}$. Write $n\coloneqq \dim_\Q \Tilde{T}$. Since $\det T(\X_s)$ is algebraic for general $s\in S$ by assumption, we conclude that $\det \T$ is algebraic when seen as a class of $H^{r,r}(\X_s^r,\Q)$ for all $s\in S$. Note that this does not conclude the proof, since $\det \Tilde{T}$ does not necessarily specialize to $\det T(\X_s)$ for every $s\in S$, since it may happen that $\dim T(\X_s)<n$ for some $s\in S$ . 
Fix an element $0\in S$ and let $X\coloneqq \X_0$ be the corresponding K3 surface. By construction, the transcendental lattice of $X$ is naturally a subspace of $\T$. Hence, if $\dim_\Q T(X)=n$ then $T(X)=\T$ (as rational quadratic spaces) and $\det T(X)$ is algebraic so there is nothing to prove. 
Let assume that $\dim_\Q \T-\dim_\Q T(X)=c>0$ and denote by $\T/T(X)$ the quotient vector space.
Up to a rational coefficient, we have the following equality:
\begin{equation}
    \label{eq.: determinant and quotient}
    \det \T=\sum\pm q_I^*\det T(X)\otimes p_I^* \det (\T/T(X)),
\end{equation}
where the sum runs over all subsets $I\subseteq \{1,\ldots,n\}$ of length $c$, $p_I$ is the projection from $X^n$ onto the $I$-th factors and $q_I$ is the projection onto the remaining factors.

\smallskip

Let $Z_1$ be an algebraic class representing $\det \T$ on $X^n$.
Then, $Z_1$ defines the following algebraic map:
\begin{align*}
    \varphi_1\colon H^{1,1}(X,\Q) \longrightarrow & H^{n-1,n-1}(X^{n-1},\Q),\\
    y \longmapsto & q_{1*}(Z_1\cap p_1^*y)
\end{align*}
where $p_1$ is the projection from $X^n$ onto the first factor and $q_1$ is the projection onto the remaining factors. 
Let $x_1\in H^{1,1}(X,\Q)$ be a fixed element and denote by $Z_2$ the algebraic class $\varphi_1(x_1)\in H^{n-1,n-1}(X^{n-1},\Q)$. Then, similarly to the previous case, $Z_2$ defines an algebraic map
\begin{align*}
    \varphi_2\colon H^{1,1}(X,\Q) \longrightarrow & H^{n-2,n-2}(X^{n-2},\Q).\\
    y \longmapsto & q_{2*}(Z_1\cap p_2^*y)
\end{align*}
Repeating this procedure $c$-times we end up with an algebraic class $Z_c$ in $H^{n-c,n-c}(X^{n-c},\Q).$ Note that the only summand of (\ref{eq.: determinant and quotient}) contributing to the resulting class $Z_c$ is the one for which $I=\{1,\ldots,c\}$. This follows from the fact that all the elements of $T(X)$ are orthogonal to $\mathrm{NS}(X)$ with respect to the canonical pairing on $X$. 
Using a basis of the rational quotient space $\T/T(X)$ consisting of algebraic classes, we see that it is possible to choose a sequence $x_1,\ldots,x_c\in H^{1,1}(X,\Q)$ for which the resulting $Z_c$ is non-zero and represents the class $\det T(X)\in H^{n-c,n-c}(X^{n-c},\Q).$ This shows that $\det T(X)$ is algebraic and concludes the proof.
\end{proof}
\end{prop}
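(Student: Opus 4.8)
The plan is to separate the two distinct issues hidden in the word ``closed'': propagating algebraicity of a \emph{fixed} class along the family, and coping with the fact that the transcendental lattice may shrink at special fibres. After shrinking $S$ I may assume that every general fibre has the same transcendental quadratic space $\T$, of dimension $n$, so that $\det\T=\det T(\X_s)$ is algebraic on $\X_s^n$ for general $s$. The first step is to show that $\det\T$ stays algebraic on \emph{every} fibre: algebraicity of cohomology classes is preserved under specialization in a flat family, since a family of algebraic cycles is parametrised by a relative Hilbert (or Chow) scheme that is proper over $S$, so the flat section $\det\T$ of the ambient local system specializes to an algebraic class on the special fibre $X^n$, where $X\coloneqq\X_0$.

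The main obstacle is that this is \emph{not} yet the desired conclusion when $c\coloneqq n-\dim_\Q T(X)>0$: on $X^n$ the algebraic class is $\det\T$, the determinant of the larger space $\T\subseteq H^2(X,\Q)$, and it need not specialize to $\det T(X)$. To bridge this gap I would exploit that the surplus directions become algebraic: the quotient $\T/T(X)$ is spanned by classes in $\mathrm{NS}(X)$, and $T(X)$ is orthogonal to $\mathrm{NS}(X)$ for the intersection form. Splitting $\T=T(X)\oplus W$ with $W\cong\T/T(X)$ algebraic, a Laplace-type expansion writes $\det\T$, up to a rational scalar, as a sum $\sum_I \pm\, q_I^*\det T(X)\otimes p_I^*\det W$ over the $c$-subsets $I\subseteq\{1,\dots,n\}$, with $p_I,q_I$ the projections of $X^n$ onto the $I$-factors and the complementary factors. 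The idea is then to contract away the algebraic factor $\det W$ and isolate $\det T(X)$.

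Concretely, let $Z_1$ be an algebraic cycle representing $\det\T$ on $X^n$. For an algebraic class $x_1\in H^{1,1}(X,\Q)$, pushing the cap product forward gives an algebraic class $q_{1*}(Z_1\cap p_1^*x_1)\in H^{n-1,n-1}(X^{n-1},\Q)$, where $p_1$ and $q_1$ are the projections of $X^n$ onto the first factor and onto the remaining factors. Choosing $x_1,\dots,x_c$ to be algebraic (Néron--Severi) classes forming a basis of $W$ and iterating this contraction $c$ times yields an algebraic class $Z_c\in H^{n-c,n-c}(X^{n-c},\Q)$. The crucial point is orthogonality: pairing against the $x_i$ annihilates every summand of the expansion except the one with $I=\{1,\dots,c\}$, since the $T(X)$-factors are orthogonal to $\mathrm{NS}(X)$; hence $Z_c$ is a nonzero rational multiple of $\det T(X)$. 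This proves $\det T(X)$ algebraic on $X^{n-c}$, and thus the proposition. I expect the two delicate steps to be exactly the specialization principle of the first paragraph and the bookkeeping guaranteeing, for a suitable choice of the $x_i$, that the surviving term is genuinely nonzero.
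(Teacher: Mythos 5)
Your proposal is correct and follows essentially the same route as the paper's proof: the same reduction to a fixed quadratic space $\T$, the same specialization of $\det\T$ along the family, the same Laplace-type expansion of $\det\T$ in terms of $\det T(X)$ and $\det(\T/T(X))$, and the same iterated contraction against Néron--Severi classes using orthogonality of $T(X)$ and $\mathrm{NS}(X)$ to isolate $\det T(X)$. The only addition is your explicit justification of the specialization step via properness of the relative Hilbert/Chow scheme, which the paper leaves implicit.
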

As an immediate corollary, we have the following:
\begin{cor}
\label{Cor.: when the determinant is the only class to study}
Let $\X\longrightarrow S$ be a family of K3 surfaces such that the Hodge conjecture for all powers of $\X_s$ holds  for general $s\in S$. If $0\in S$ is an element such that $\mathrm{End}_{\mathrm{Hdg}(\X_0)}(T(\X_0))=\Q,$ then the Hodge conjecture holds for all powers of $\X_0$.
\begin{proof}
Since we are assuming that the endomorphism field of $\X_0$ is $\Q$, by Theorem \ref{generators in the case of totally real K3}, the algebra of Hodge classes in $\bigotimes^{\bullet} T(\X_0)$ is generated by
\[\mathrm{End}_{\mathrm{Hdg}(\X_0)}(T(\X_0))\simeq \Q\; \text{ and } \det T(\X_0).\]
This implies the statement, indeed, the generator of $\mathrm{End}_{\mathrm{Hdg}(\X_0)}(T(\X_0))$ is algebraic, since it corresponds to a component of the class of the diagonal $\Delta\subseteq \X_0\times \X_0$, and $\det T(\X_0)$ is algebraic by Proposition \ref{Prop.: determinant is algebraic is a closed property in families}.
\end{proof}
\end{cor}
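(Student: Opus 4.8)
The plan is to reduce, via Lemma~\ref{Lem.: Hodge conjecture and Transcendental lattice}, to proving that every Hodge class in $\bigotimes^{\bullet} T(\X_0)$ is algebraic, and then to exploit the fact that the endomorphism field $E=\mathrm{End}_{\mathrm{Hdg}(\X_0)}(T(\X_0))=\Q$ is totally real in order to obtain an explicit and very short list of generators. Indeed, with $E=\Q$ one has $r\coloneqq\dim_E T(\X_0)=\dim_\Q T(\X_0)$, so the space of exceptional Hodge classes $\bigwedge^r_E T(\X_0)=\bigwedge^r T(\X_0)\subseteq T(\X_0)^{\otimes r}$ is one-dimensional and spanned by $\det T(\X_0)$. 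Theorem~\ref{generators in the case of totally real K3} then says that any Hodge class in $\bigotimes^{\bullet}T(\X_0)$ can be expressed, in the sense of Remark~\ref{rmk.: meaning of expressed in terms of}, in terms of the degree-two Hodge classes and the single class $\det T(\X_0)$. It therefore suffices to prove that these two types of generators are algebraic.

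For the degree-two part I would argue that the Hodge classes in $T(\X_0)^{\otimes 2}$ correspond, through the polarization, to $\mathrm{End}_{\mathrm{Hdg}(\X_0)}(T(\X_0))\simeq\Q$, so up to scalar there is only the class attached to the identity endomorphism of $T(\X_0)$. This class is algebraic because it is a component of the class of the diagonal $\Delta\subseteq\X_0\times\X_0$; equivalently, the orthogonal projector onto $T(\X_0)\subseteq H^2(\X_0,\Q)$ is algebraic, since its complement $\mathrm{NS}(\X_0)_\Q$ is generated by algebraic classes.

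The remaining input is the algebraicity of $\det T(\X_0)$, and this is the only genuinely family-theoretic point. Since the Hodge conjecture holds for all powers of the general fibre $\X_s$, and $\det T(\X_s)$ is a Hodge class, the class $\det T(\X_s)$ is algebraic for general $s$. I would then invoke Proposition~\ref{Prop.: determinant is algebraic is a closed property in families} to propagate this to $s=0$, obtaining that $\det T(\X_0)$ is algebraic even in the presence of a possible drop of the rank of the transcendental lattice.

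Combining the two previous steps, all generators provided by Theorem~\ref{generators in the case of totally real K3} are algebraic, so by Remark~\ref{rmk.: meaning of expressed in terms of} every Hodge class in $\bigotimes^{\bullet}T(\X_0)$ is algebraic, and the Hodge conjecture holds for all powers of $\X_0$. The entire difficulty of the statement is concentrated in Proposition~\ref{Prop.: determinant is algebraic is a closed property in families} — the specialization of algebraicity of the determinant under degeneration of $T$ — which we may assume here; granting that, the corollary is a clean bookkeeping of the very few generators surviving when $E=\Q$.
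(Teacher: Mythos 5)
Your proposal is correct and follows essentially the same route as the paper: reduce via Theorem \ref{generators in the case of totally real K3} (with $E=\Q$) to the two generators, note the degree-two class is a component of the diagonal, and propagate the algebraicity of $\det T(\X_0)$ from the general fibre using Proposition \ref{Prop.: determinant is algebraic is a closed property in families}. Your write-up merely makes explicit a step the paper leaves implicit, namely that the hypothesis on the general fibre yields the algebraicity of $\det T(\X_s)$ for general $s$ needed to invoke that proposition.
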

\begin{rmk}
With a little bit of work, one can prove the same statement of Proposition \ref{Prop.: determinant is algebraic is a closed property in families} for the exceptional Hodge classes. Note that the determinant of the transcendental lattice is the unique exceptional Hodge class in the case $E=\Q$.
This in particular shows that, in the statement of Corollary \ref{Cor.: when the determinant is the only class to study}, the assumption $\mathrm{End}_{\mathrm{Hdg}(\X_0)}(T(\X_0))=\Q$ can be weakened to 
\[\mathrm{End}_{\mathrm{Hdg}(\X_0)}(T(\X_0))=\mathrm{End}_{\mathrm{Hdg}(\X_s)}(T(\X_s)).\]
Note that the inclusion \textquotedblleft $\supseteq$" always holds true. We do not give here the detailed proof since this extended result is not needed in the remainder of this paper.
\end{rmk}

\section{Abelian varieties of Weil type}
\label{Sec. Ab. of Weil type}
In this section, we recall the definition of abelian varieties of Weil type and we study the Hodge conjecture for their powers. Our interest in these varieties lies in the fact that, as we recall in the next section, the Kuga--Satake varieties of K3 surfaces of Picard number $16$ are powers of abelian fourfolds of Weil type. These abelian varieties have been first introduced and studied by Weil \cite{weil}. We refer to \cite{van1994introduction} for a complete introduction. For a survey on the Hodge conjecture for abelian varieties we refer to \cite{moonen1995hodge} and to \cite{moonen1999hodge}.

\smallskip

In this section, we denote by $K$ the CM field $\Q(\sqrt{-d})$, where $d$ is a positive rational number.

\begin{defn}
Let $A$ be an abelian variety of dimension $2n$ such that $K\subseteq \mathrm{End}(A)\otimes_\mathbb{Z}\Q$.
Then, $A$ is an abelian variety of $K$\textit{-Weil type} if the action of $\sqrt{-d}$ on the tangent space at the origin of $A$ has eigenvalues $\sqrt{-d}$ and $-\sqrt{-d}$ both with multiplicity $n$.
\end{defn}

Given an abelian variety $A$ of $K$-Weil type, there exists a polarization $H$ on $A$ such that $(\sqrt{-d})^* H= d H$. This polarization induces the following $K$-Hermitian map on $H^1(A,\Q)$:
\begin{equation}
\label{Eq.: Hermitian form abelian varieties of Weil type}
   \tilde{H}\colon H^1(A,\Q)\times H^1(A,\Q) \longrightarrow K, \hspace{0.25cm} \tilde{H}(x,y)\coloneqq H(x,\sqrt{-d}y)+\sqrt{-d}H(x,y).
\end{equation}
Let $\mathrm{N}(K)$ be the set of norms of $K$, and let $\mathrm{det}(\tilde{H})$ be the determinant of $\tilde{H}$ with respect to any $K$-basis of $H^1(A,\Q)$. Then, one checks that $\det(\tilde{H})$ belongs to $\Q^*$ and that its image $\delta$ in $\Q/\mathrm{N}(K)$ does not depend on the choice of the basis. The class $\delta$ is called the \textit{discriminant} of $A$. We say that an abelian variety $A$ of $K$-Weil type is \textit{general} if it has maximal Hodge group. By \cite[Thm.\ 6.11]{van1994introduction}, this means that $\mathrm{Hdg}(A)=\mathrm{SU}(\tilde{H})$ and that $\mathrm{End}(A)\otimes_\mathbb{Z}\Q=K$.

\smallskip
Note that there are similarities between this situation and the case of K3 surfaces with CM endomorphism field.
Let $E$ be a CM field with maximal totally real subfield $\Q$, and let $X$ be a K3 surface with endomorphism field $E$ and let $2n\coloneqq \dim T(X)$. By \cite[Thm.\ 2.3.1]{zarhin1983hodge}, the Hodge group of $X$ is $U(\varphi)$, where $\varphi$ is $E$-Hermitian form on $T(X)$. The discussion of Section \ref{Section: CM case} and in particular Lemma \ref{Lem: the action of S} show that $U(\varphi)(\C)\simeq \mathrm{GL}(n,\C)$ and that the representation of $\mathrm{Hdg}(X)(\C)$ on $T(X)_\C$ is the direct sum of the standard representation of $\mathrm{GL}(n,\C)$ and its dual. 
In the case of a general abelian variety $A$ of Weil type of dimension $2n$, the Hodge group is $\mathrm{SU}(\tilde{H})$. Then, a similar argument shows the following:
\begin{prop}\cite[Lem.\ 6.10]{van1994introduction}
\label{Prop.: Hodge group of a general abelian variety of Weyl type}
Let $A$ be a general abelian variety of Weil type of dimension $2n$. Then, $\mathrm{Hdg}(A)(\C)$ is isomorphic to $\mathrm{SL}(2n,\C)$, and the representation $H^1(A,\C)$ of $\mathrm{Hdg}(A)(\C)$ is the direct sum of the standard representation of $\mathrm{SL}(2n,\C)$ and its dual.
\end{prop}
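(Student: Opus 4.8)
The plan is to transcribe the analysis of Section \ref{Section: CM case} for K3 surfaces with CM endomorphism field to the present setting, where the totally real subfield of the CM field is $\Q$ and the relevant group is the special unitary group rather than the full unitary group. By the generality assumption together with \cite[Thm.\ 6.11]{van1994introduction}, we have $\mathrm{Hdg}(A)=\mathrm{SU}(\tilde{H})$, where $\tilde{H}$ is the $K$-Hermitian form of $(\ref{Eq.: Hermitian form abelian varieties of Weil type})$ on $W\coloneqq H^1(A,\Q)$. As $\dim_\Q W=4n$ and $[K:\Q]=2$, the space $W$ is a $K$-vector space of dimension $2n$. First I would complexify: via the isomorphism $K\otimes_\Q\C\simeq\C\times\C$, the action of $\sqrt{-d}$ splits $W_\C=W^+\oplus W^-$ into its two eigenspaces, each of complex dimension $2n$. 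This is the exact analogue of the eigenspace decomposition of Lemma \ref{lem: Hermitian form on V_s}, with the single summand reflecting the unique embedding $\Q\hookrightarrow\R$.

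Next I would import the two structural lemmas of the CM case. Exactly as in Lemma \ref{Lem: (V_s)_C=Y+Z}, the $\C$-linear extension of the imaginary part of $\tilde{H}$ is a non-degenerate symplectic form on $W_\C$ for which $W_\C=W^+\oplus W^-$ is a Lagrangian decomposition, so that $\tilde{H}$ induces an isomorphism $W^-\simeq(W^+)^*$. Then, repeating the argument of Remark \ref{Rmk: subgroups of GL(V_s)} and Lemma \ref{Lem: the action of S} word for word, the complexified unitary group $\mathrm{U}(\tilde{H})(\C)$ is identified with $\mathrm{GL}(W^+,\C)\simeq\mathrm{GL}(2n,\C)$, an element $B$ acting as $B$ on $W^+$ and as the contragredient representation on $W^-\simeq(W^+)^*$; in particular the representation of $\mathrm{U}(\tilde{H})(\C)$ on $W_\C$ is the direct sum of the standard representation and its dual.

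The one genuinely new point is the passage from $\mathrm{U}(\tilde{H})$ to $\mathrm{SU}(\tilde{H})$, and this is where care is needed. Since $\mathrm{U}(\tilde{H})(\C)$ acts on $W_\C\simeq W^+\oplus(W^+)^*$ as the direct sum of the standard representation and its dual, every element already has determinant $1$ as a $\C$-linear endomorphism of $W_\C$; hence the special condition is invisible to the $\C$-determinant on $W_\C$ and must instead be extracted from the $K$-linear determinant. Concretely, $\mathrm{SU}(\tilde{H})$ is the kernel of the determinant character $\det_K\colon\mathrm{U}(\tilde{H})\to R^1_{K/\Q}\mathbb{G}_m$ valued in the norm-one torus, and under the identification $g\mapsto B=g|_{W^+}$ this character becomes, after complexification, the ordinary determinant $\det_\C B$ on $W^+$. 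Therefore $\mathrm{SU}(\tilde{H})(\C)=\mathrm{SL}(W^+,\C)\simeq\mathrm{SL}(2n,\C)$, and its representation on $W_\C\simeq W^+\oplus(W^+)^*$ is the standard representation of $\mathrm{SL}(2n,\C)$ together with its dual, as claimed. I expect this determinant bookkeeping---keeping the trivial $\C$-determinant on $W_\C$ apart from the $K$-determinant that defines $\mathrm{SU}$---to be the only delicate step, the remainder being a direct transcription of Lemmas \ref{Lem: (V_s)_C=Y+Z} and \ref{Lem: the action of S}.
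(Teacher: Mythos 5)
Your proposal is correct and follows essentially the same route as the paper, which itself only gestures at ``a similar argument'' transposing Lemmas \ref{lem: Hermitian form on V_s}, \ref{Lem: (V_s)_C=Y+Z} and \ref{Lem: the action of S} from the CM K3 setting and otherwise defers to \cite[Lem.\ 6.10]{van1994introduction}. Your explicit treatment of the passage from $\mathrm{U}(\tilde{H})$ to $\mathrm{SU}(\tilde{H})$ --- distinguishing the trivial $\C$-determinant on $W_\C$ from the $K$-determinant, whose complexification becomes $\det_\C$ on $W^+$ --- is exactly the one point the paper leaves implicit, and you handle it correctly.
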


Abelian varieties of Weil type are characterized by the existence of Hodge classes which do not belong to the algebra generated by divisors:
\begin{rmk}
\label{Rmk.: exceptional classes on A}
Let $A$ be an abelian variety of $K$-Weil type of dimension $2n$. By Lemma \ref{Lem.: wedge and field extensions}, there exists an embedding
\[
\textstyle\epsilon\colon\bigwedge_{K}^{2n} H^1(A,\Q)\longhookrightarrow \bigwedge^{2n} H^1(A,\Q)\simeq H^{2n}(A,\Q).
\]
The condition on the action of the field $K$ on $H^1(A,\Q)$ implies that the image of $\epsilon$ consists of Hodge classes, see \cite[Prop.\ 4.4]{deligne1982hodge}. These classes are called \textit{exceptional classes} or \textit{Weil classes}. From now on, we identify $\bigwedge_{K}^{2n} H^1(A,\Q)$ with its image via $\epsilon$ in $\bigwedge^{2n} H^1(A,\Q)$.
\end{rmk}

By Proposition \ref{Prop.: Hodge group of a general abelian variety of Weyl type}, the algebra of Hodge classes on a general abelian variety of Weil type satisfies the following
\[
\textstyle\left(\bigwedge^{\bullet}H^1(A,\Q)\right)^{\mathrm{Hdg}(A)}\otimes \C\simeq \left(\bigwedge^{\bullet} (W\oplus W^*)\right)^{\mathrm{SL}(W,\C)},
\]
where $W$ is a complex vector space of dimension $2n$. Complete contractions are natural examples of invariant elements of the right-hand side. Let us recall their definitions. 
\begin{defn}
Let $W$ be a $2n$-dimensional vector space and let $s$ be a positive integer. Then, an element in $W^{\otimes s}\otimes (W^*)^{\otimes s}$ is called a \textit{complete contraction} of degree $2s$ if its image under the natural $\mathrm{SL}(W,\C)$-invariant isomorphism 
\[W^{\otimes s}\otimes (W^*)^{\otimes s}\simeq
\mathrm{Hom}((W^*)^{\otimes s}\otimes W^{\otimes s},\C)
\] 
is equal to 
\[
\textstyle\mu_1\otimes\cdots\otimes\mu_s\otimes v_1\otimes\cdots\otimes v_s\longmapsto \prod_i \mu_i(v_{\sigma(i)}),
\]
for some permutation $\sigma\in \mathfrak{S}^s$
\end{defn}

\begin{rmk}
\label{rmk.: complete contractions and degree-two complete contractions}
From the definition, it is immediate to see that complete contractions are invariant under the action of $\mathrm{SL}(W,\C)$ and that any complete contraction can be expressed in terms of complete contractions of degree two.
\end{rmk}
Let $s$ and $s'$ be non-negative integers, and let $I=(i_1,\ldots,i_k)$ and $I'=(i_1',\ldots,i_k')$ be partitions of $s$ and $s'$ respectively, i.e., $i_1+\cdots +i_k=s$ and $i'_1+\cdots +i'_k=s'$. Considering the natural action of $\mathfrak{S}^{s}\times \mathfrak{S}^{s'}$ on
$W^{\otimes s}\otimes(W^*)^{\otimes s'}$, we introduce following definition.
\begin{defn}
\label{defn: I-alternating contractions}
An element  $\alpha\in W^{\otimes s}\otimes(W^*)^{\otimes s'}$ is $(I,I')$\textit{-alternating} if it satisfies 
\[(\sigma,\sigma')(\alpha)=\text{sgn}(\sigma)\text{sgn}(\sigma')\alpha,\]
for any pair of permutations $\sigma\in\mathfrak{S}^{i_1}\times\cdots\times \mathfrak{S}^{i_k}\subseteq\mathfrak{S}^{s}$ and $\sigma'\in\mathfrak{S}^{i_1'}\times\cdots\times \mathfrak{S}^{i_k'}\subseteq\mathfrak{S}^{s'}$.
\end{defn}
In the case where $s=s'$ and $I=I'=(s)$, the vector space of $(I,I')$-alternating elements in $W^{\otimes s}\otimes(W^*)^{\otimes s}$ is equal to $\bigwedge^s W\otimes \bigwedge^s W^*$.

\smallskip

We are now able to state the following theorem from invariant theory.
\begin{thm}\cite[Thm.\ 8.4]{kraft1996classical}
\label{Thm: invariants under SL}
Let $W$ be a $2n$-dimensional complex vector space and denote by $W^*$ its dual. Then, any element of 
\[
\textstyle\left(\bigotimes^{\bullet}(W\oplus W^*)\right)^{\mathrm{SL}(W,\C)}
\]
can be expressed in terms of complete contractions and the two determinants $\det W$ and $\det W^*$ in $(W\oplus W^*)^{\otimes 2n}$.
\begin{proof}
See Definition \ref{def.: meaning of expressed in terms of} for formal meaning of the expression \textquotedblleft can be expressed in terms of". To deduce this statement from the one in the reference, just note that, for any pair of non-negative integers $p$ and $q$, any element of $W^{\otimes p}\otimes (W^*)^{\otimes q}\subseteq(W\oplus W^*)^{\otimes (p+q)}$ can be viewed as a homogeneous polynomial function $(W^*)^{\oplus p}\oplus W^{\oplus q}\longrightarrow \C$ of degree $p+q$.
\end{proof}
\end{thm}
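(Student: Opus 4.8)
The plan is to reduce the statement to the First Fundamental Theorem for $\mathrm{SL}(W,\C)$ as formulated in \cite[Thm.\ 8.4]{kraft1996classical}, which describes invariant \emph{functions} on copies of $W$ and $W^*$ rather than invariant \emph{tensors}; the only real content is the translation between these two viewpoints. First I would exploit the bigrading of $\bigotimes^{\bullet}(W\oplus W^*)$ by the number $p$ of $W$-factors and the number $q$ of $W^*$-factors. Since the $\mathrm{SL}(W,\C)$-action preserves each summand $W^{\otimes p}\otimes(W^*)^{\otimes q}$, it suffices to treat an invariant $\alpha$ that is homogeneous of bidegree $(p,q)$.

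Next I would set up the dictionary between tensors and functions. An element $\alpha\in W^{\otimes p}\otimes(W^*)^{\otimes q}$ is the same datum as a multilinear map
\[
(W^*)^{\oplus p}\oplus W^{\oplus q}\longrightarrow\C,
\]
obtained by pairing the $W$-factors of $\alpha$ against the $p$ covector arguments and the $W^*$-factors against the $q$ vector arguments; this map is a homogeneous polynomial of degree $p+q$, and it is $\mathrm{SL}(W,\C)$-invariant precisely when $\alpha$ is. Under this identification the First Fundamental Theorem \cite[Thm.\ 8.4]{kraft1996classical} asserts that the ring of invariant functions on $q$ vectors and $p$ covectors is generated by the scalar pairings $\langle\eta_i,w_j\rangle$ together with the two kinds of determinants, one built from $2n$ vectors and one from $2n$ covectors. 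Translating each generator back: a pairing corresponds to the canonical contraction tensor in $W\otimes W^*$; a determinant of $2n$ covectors is an alternating multilinear function of covectors and hence corresponds to the generator of $\bigwedge^{2n}W=\langle\det W\rangle$; and dually a determinant of $2n$ vectors corresponds to $\det W^*\in\bigwedge^{2n}W^*$.

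The main work, and the step I expect to be the most delicate, is reconciling the two notions of \textquotedblleft generated by". The reference presents $\alpha$, viewed as a function, as a \emph{polynomial} in the generators, whereas I need the conclusion in the sense of Remark \ref{rmk.: meaning of expressed in terms of}, namely a sum of tensor products of complete contractions and the two determinants. Here multihomogeneity does the bookkeeping: since $\alpha$ is multilinear, of degree exactly one in each of the $p+q$ arguments, only those monomials in the generators survive in which every covector slot and every vector slot is used exactly once. Such a monomial is a product of pairings, covector-determinants, and vector-determinants whose argument-sets partition the slots; multiplying generators corresponds to tensoring their tensors (after permuting factors into the appropriate positions), a product of pairings assembles into a complete contraction, and the surviving determinant factors become copies of $\det W$ and $\det W^*$. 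Summing over the finitely many monomials produced by the theorem then exhibits $\alpha$ in the required form, completing the argument.
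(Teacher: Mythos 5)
Your proposal is correct and follows essentially the same route as the paper: the paper's proof consists precisely of the observation that a tensor in $W^{\otimes p}\otimes (W^*)^{\otimes q}$ is the same as a multilinear (hence multihomogeneous) polynomial function on $(W^*)^{\oplus p}\oplus W^{\oplus q}$, so that the First Fundamental Theorem for $\mathrm{SL}(W,\C)$ applies and multilinearity forces the surviving monomials to be tensor products of contractions and determinants. You have simply written out the details that the paper leaves implicit, and your translation of the generators (pairings to contraction tensors, the covector-determinant to $\det W$, the vector-determinant to $\det W^*$) is the intended one.
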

Applying Theorem \ref{Thm: invariants under SL} to a general abelian variety of Weil type, one can compute the dimension of the vector space of Hodge classes. Indeed, the following holds:

\begin{thm}\cite[Thm.\ 6.12]{weil}
\label{Thm.: dimension of Hodge ring for general abelian varieties of Weil type}
Let $A$ be a general abelian variety of $K$-Weil type of dimension $2n$. Then, 
\[
\dim(H^{s,s}(A,\Q))=
\begin{cases} 3, & \mbox{if }s=n \\ 1, & \mbox{if }s \not =n.
\end{cases}
\]
In particular, the Picard number of $A$ is one.
\begin{proof}
We just give a sketch of the proof since the same techniques will be used with much more detail later in this section: By assumption, $A$ is a general abelian variety of Weil type. Then, as before, we have the following isomorphism:
\begin{equation}
    \label{eq. a}
\textstyle\left(\bigwedge^{\bullet}H^1(A,\Q)\right)^{\mathrm{Hdg}(A)}\otimes_\Q\C\simeq
\left(\bigwedge^{\bullet}(W\oplus W^*)\right)^{\mathrm{SL}(W,\C)},
\end{equation}
where $W$ is a $2n$-dimensional complex vector space.
After considering the natural $\mathrm{SL}(W,\C)$-invariant embedding \[\textstyle\bigwedge^{\bullet}(W\oplus W^*)\longhookrightarrow\bigotimes^{\bullet}(W\oplus W^*),\]
we can apply Theorem \ref{Thm: invariants under SL} to deduce that the $\C$-algebra $\left(\bigwedge^{\bullet}(W\oplus W^*)\right)^{\mathrm{SL}(W,\C)}$ is generated by $(s,s)$-alternating linear combinations of complete contractions in $\bigotimes^{2s}(W\oplus W^*)$ for all $1\leq s\leq 2n$ together with the two determinants $\det W, \det W^*\in \bigotimes^{2n}(W\oplus W^*)$. 
Note that for every $s$ there exists a unique linear combination of complete contractions in $W^{\otimes s}\otimes (W^*)^{\otimes s}$ which is $(s,s)$-alternating: It corresponds to the $\mathrm{SL}(W)$-invariant map
\begin{align*}
  \textstyle  \bigwedge^s W^*\otimes \bigwedge^s W&\longrightarrow\C\\
   \mu_1\wedge\cdots\wedge\mu_s\otimes v_1\wedge\cdots\wedge v_s&\longmapsto \det (\mu_i(v_j))_{i,j}
\end{align*} 
This, together with the isomorphism in (\ref{eq. a}), proves the statement.
\end{proof}
\end{thm}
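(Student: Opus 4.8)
The plan is to convert the dimension count into a problem about $\mathrm{SL}(W,\C)$-invariants and then read off the answer in each degree. First I would use Lemma \ref{Lem.: Hodge classes= invariant classes} and Remark \ref{rmk.: Hodge classes and Hodge group} to identify the space of Hodge classes $H^{s,s}(A,\Q)$ with the invariants $(\bigwedge^{2s}H^1(A,\Q))^{\mathrm{Hdg}(A)}$, and then extend scalars via the isomorphism $(\ref{eq. a})$ to obtain
\[
\dim_\Q H^{s,s}(A,\Q)=\dim_\C\left(\textstyle\bigwedge^{2s}(W\oplus W^*)\right)^{\mathrm{SL}(W,\C)},
\]
where $W$ is the $2n$-dimensional standard representation furnished by Proposition \ref{Prop.: Hodge group of a general abelian variety of Weyl type}, so that $H^1(A,\C)\simeq W\oplus W^*$ as $\mathrm{SL}(W,\C)$-representations.

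Second, I would compose the $\mathrm{SL}(W,\C)$-equivariant inclusion $\bigwedge^{2s}(W\oplus W^*)\hookrightarrow\bigotimes^{2s}(W\oplus W^*)$ with Theorem \ref{Thm: invariants under SL}. This shows that every invariant in the wedge power is a linear combination of complete contractions together with, exactly when $2s=2n$, the two determinants $\det W\in\bigwedge^{2n}W$ and $\det W^*\in\bigwedge^{2n}W^*$. Since a complete contraction lies in $W^{\otimes s}\otimes(W^*)^{\otimes s}$, intersecting with the totally antisymmetric subspace forces the surviving contractions to be $(s,s)$-alternating in the sense of Definition \ref{defn: I-alternating contractions} (with $I=I'=(s)$), hence to lie in the summand $\bigwedge^s W\otimes\bigwedge^s W^*$.

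Third, I would check that the space of $(s,s)$-alternating complete contractions is one-dimensional. A complete contraction of degree $2s$ is indexed by a permutation $\sigma\in\mathfrak{S}^s$ pairing the $s$ copies of $W$ with the $s$ copies of $W^*$; antisymmetrizing separately in the vector and covector indices identifies all of these, up to sign, with the single class given by the determinant pairing
\[
\textstyle\bigwedge^s W^*\otimes\bigwedge^s W\longrightarrow\C,\qquad \mu_1\wedge\cdots\wedge\mu_s\otimes v_1\wedge\cdots\wedge v_s\longmapsto\det(\mu_i(v_j))_{i,j},
\]
which is exactly the $(s,s)$-alternating contraction written in the statement of the theorem. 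Thus for each $0\le s\le 2n$ the complete contractions contribute precisely one dimension.

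Finally I would assemble the degrees along the $\mathrm{SL}(W,\C)$-stable decomposition $\bigwedge^{2s}(W\oplus W^*)=\bigoplus_{p+q=2s}\bigwedge^p W\otimes\bigwedge^q W^*$. For $s\ne n$ the determinants do not occur (they have degree $2n\ne 2s$), so the only invariant is the $(s,s)$-contraction in the summand $p=q=s$, giving $\dim H^{s,s}(A,\Q)=1$. For $s=n$ the contraction lies in $\bigwedge^n W\otimes\bigwedge^n W^*$, whereas $\det W$ and $\det W^*$ lie in the distinct summands $\bigwedge^{2n}W$ and $\bigwedge^{2n}W^*$; the three are therefore linearly independent and, by the second step, exhaust the invariants, so $\dim H^{n,n}(A,\Q)=3$. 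The Picard rank is the case $s=1$: since the fourfolds of interest have $n=2$, one gets $\dim\mathrm{NS}(A)_\Q=\dim H^{1,1}(A,\Q)=1$. The one genuinely substantive point is the third step together with the independence check in the last one, namely verifying that the alternating contractions collapse to a single class and that at $s=n$ the two determinants are independent of it while no further invariant appears; everything else is supplied by Proposition \ref{Prop.: Hodge group of a general abelian variety of Weyl type} and Theorem \ref{Thm: invariants under SL}.
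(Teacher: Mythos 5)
Your proposal is correct and follows essentially the same route as the paper: identify $H^{s,s}(A,\Q)$ with $\mathrm{Hdg}(A)$-invariants, pass to $\mathrm{SL}(W,\C)$-invariants of $\bigwedge^{2s}(W\oplus W^*)$ via the isomorphism $(\ref{eq. a})$, embed into the tensor algebra, apply Theorem \ref{Thm: invariants under SL}, and observe that the $(s,s)$-alternating complete contractions collapse to the single determinant pairing, with $\det W$ and $\det W^*$ contributing the two extra dimensions only in degree $s=n$. The only difference is that you spell out the final bookkeeping (the decomposition into summands $\bigwedge^p W\otimes\bigwedge^q W^*$ and the linear independence of the three classes at $s=n$) that the paper's sketch leaves implicit.
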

\begin{rmk}
\label{rmk.: Hodge classes abelian var of weil type}
As an immediate consequence of Theorem \ref{Thm.: dimension of Hodge ring for general abelian varieties of Weil type}, we deduce that the algebra of Hodge classes on $A$ is generated by the unique class in $\mathrm{NS}(A)_\Q$ and the exceptional classes in $\bigwedge^{2n}_{K}H^1(A,\Q)$. As in the case of K3 surfaces with totally real multiplication, this follows from the isomorphism
\[
\textstyle
\left( \bigwedge^{2n}_{K}H^1(A,\Q) \right)\otimes_\Q \C\simeq\langle \det W,\det W^*\rangle_\C,
\]
cf. \cite[Prop.\ 4.4]{deligne1982hodge}.
\end{rmk}
In general, it is not known whether the Weil classes are algebraic, but in the case of abelian fourfolds of Weil type with discriminant one, the following holds:
\begin{thm}\cite[Thm.\ 1.5]{markman2022monodromy}
\label{Thm.: Markman}
Let $A$ be abelian fourfold of Weil type with discriminant one. Then, the Weil classes on $A$ are algebraic. In particular, the Hodge conjecture holds for the general abelian fourfold of Weil type with discriminant one.
\end{thm}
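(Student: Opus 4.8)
The plan is to reduce the statement to the algebraicity of a single cohomology class and then to produce that class geometrically. By Remark~\ref{rmk.: Hodge classes abelian var of weil type}, the algebra of Hodge classes of a general abelian fourfold $A$ of $K$-Weil type is generated by the divisor class---algebraic by the Lefschetz $(1,1)$-theorem---together with the two-dimensional space of Weil classes $\bigwedge^{4}_{K}H^1(A,\Q)\subseteq H^4(A,\Q)$. This space is a free rank-one module over $K$, and since $K\subseteq \mathrm{End}(A)\otimes_{\mathbb{Z}}\Q$ acts on $A$ by algebraic self-correspondences, it would be enough to exhibit one algebraic cycle of codimension $4$ on a power of $A$ whose class is a single nonzero Weil class: the $K$-action then propagates algebraicity to the entire line of Weil classes, exactly as the endomorphism field does for the exceptional Hodge classes of K3 surfaces with totally real multiplication.

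First I would attach to $A$ a hyperk\"ahler model, following the generalized Kummer strategy of \cite{markman2018monodromy}. The discriminant-one hypothesis is precisely what makes this possible: it lets one realize the weight-two Hodge structure $T$ of K3 type attached to $(A,K)$---the one to which $A$ is associated by the Kuga--Satake construction---as a sub-Hodge structure of $H^2(M,\Q)$ for a projective hyperk\"ahler manifold $M$ of generalized Kummer deformation type, the Beauville--Bogomolov--Fujiki form matching the polarization of $T$ up to a $\Q$-isometry. Under this identification the Weil classes of Remark~\ref{rmk.: Hodge classes abelian var of weil type} correspond, by the formalism of \cite[Prop.~4.4]{deligne1982hodge}, to an explicit Hodge class on $M$, and the advantage of passing to $M$ is that its cohomology carries a large supply of algebraic classes and operators.

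The heart of the argument, and what I expect to be the main obstacle, is to turn this Hodge-theoretic identification into an algebraic one. Two things must be shown: that the isomorphism relating the relevant graded pieces of $H^\bullet(A,\Q)$ and $H^\bullet(M,\Q)$ is induced by an algebraic correspondence between $M$ and $A$, and that the image of the Weil class on the $M$-side is visibly algebraic. For the latter one exploits that on hyperk\"ahler manifolds of generalized Kummer type the operators generating the Looijenga--Lunts--Verbitsky Lie algebra---the Lefschetz $\mathfrak{sl}_2$-triples attached to K\"ahler and Beauville--Bogomolov classes, together with the monodromy reflections---are induced by algebraic self-correspondences built from the diagonal and from incidence loci, so that a suitable word in these operators applied to the class of the diagonal of $M$ produces the sought class. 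The algebraicity of the correspondence between $M$ and $A$ is the genuinely delicate input: it cannot be taken for granted (that would be circular with the Hodge conjecture), but is instead established geometrically, by exhibiting the cycle on a distinguished member of the family where both $A$ and $M$ degenerate to explicit objects and then spreading it out.

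Finally, to obtain the statement for every abelian fourfold of $K$-Weil type with discriminant one---and not merely for one conveniently chosen $A$---I would carry out the whole construction in families over the period domain parametrizing discriminant-one Weil-type Hodge structures. The hyperk\"ahler model $M$, the correspondence, and hence the algebraic cycle all deform over this base, so the cycle specializes to a Weil class on every fibre, which gives the algebraicity of the Weil classes unconditionally. For the general such $A$, Remark~\ref{rmk.: Hodge classes abelian var of weil type} identifies the divisor class together with the Weil classes as generators of the entire algebra of Hodge classes, and the full Hodge conjecture for $A$ follows.
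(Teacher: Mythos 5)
This theorem is not proved in the paper at all: it is imported verbatim from \cite[Thm.\ 1.5]{markman2018monodromy}, so there is no internal argument to compare yours against. Judged on its own terms, your proposal is a plausible top-level roadmap of Markman's strategy (generalized Kummer geometry, the role of the discriminant-one hypothesis, propagation of algebraicity along the $K$-action on the rank-one $K$-module $\bigwedge^4_K H^1(A,\Q)$, and a deformation argument over the period domain to pass from one fibre to all fibres), and the reduction via Remark~\ref{rmk.: Hodge classes abelian var of weil type} to a single nonzero Weil class is correct. But it is not a proof: the two steps you yourself identify as the heart of the matter --- that the correspondence between the hyperk\"ahler model $M$ and $A$ is algebraic, and that the image of the Weil class on the $M$-side is algebraic --- are exactly where all the content of Markman's theorem lives, and you leave both as acknowledged black boxes. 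Saying that the correspondence ``is established geometrically by exhibiting the cycle on a distinguished member and spreading it out'' names the difficulty without resolving it; without a concrete construction there, nothing has been shown.

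Two further points of accuracy. First, Markman's actual mechanism is not the one you sketch: he does not embed the K3-type Hodge structure $T$ into $H^2(M,\Q)$ and chase it with Looijenga--Lunts--Verbitsky operators. Rather, he realizes abelian fourfolds of Weil type with trivial discriminant via the \emph{third} cohomology (the intermediate Jacobian) of generalized Kummer varieties, and the deformation-invariance of algebraicity is supplied by Verbitsky's theory of hyperholomorphic sheaves, not by spreading out a cycle from a special fibre in the naive sense. Second, the final ``in particular'' clause does require Remark~\ref{rmk.: Hodge classes abelian var of weil type} (i.e.\ Theorem~\ref{Thm.: dimension of Hodge ring for general abelian varieties of Weil type}) exactly as you use it, so that part of your reduction is fine; the gap is entirely in the production of the algebraic cycle.
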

The same result was previously proven by Schoen \cite{schoen1988hodge} in the case of abelian fourfolds $\Q(\sqrt{-3})$-Weil type for arbitrary discriminant and by Schoen \cite{schoen1988hodge} and independently by van Geemen \cite{van1996theta} in the case of abelian fourfolds $\Q(i)$-Weil type with discriminant one.

\smallskip

In the remainder of this section, we prove that the Hodge conjecture for a general abelian variety of Weil type implies the Hodge conjecture for all its powers.
Our strategy is the following: We first prove an extension of Theorem \ref{Thm: invariants under SL} which allows us to find a set of generators for the algebra of Hodge classes on the powers of $A$. Then, we show that there are relations between these generators and we conclude that these generators are algebraic if the Hodge conjecture holds for $A$. For a comparison with the work of Abdulali \cite{abdulali1999abelianIII}, see Remark \ref{rmk.: comparison with Abdulali}.

\smallskip

Let $A$ be a general abelian variety of Weil type and let $k$ be a positive integer. By Theorem \ref{Thm: invariants under SL}, the ring of Hodge classes on $A^k$ satisfies
\[
\textstyle\left(\bigwedge^{\bullet}(H^1(A,\Q)^{\oplus k})\right)^{\mathrm{Hdg}(A)}\otimes\C\simeq \left(\bigwedge^{\bullet}((W\oplus W^*)^{\oplus k})\right)^{\mathrm{SL}(W,\C)},
\]
where $W$ is a $2n$-dimensional complex vector space. To study this ring, let us introduce the notion of \textit{realizations} of $\det W$ and $\det W^*$.

\begin{rmk}
\label{rmk.: all possible realizations of determinants}
Let $W$ be a $2n$-dimensional complex vector space and let $k$ be a positive integer. Consider the following canonical decomposition:
\[
\textstyle
\bigwedge^{2n}\left((W\oplus W^*)^{\oplus k}\right)=\bigoplus \left( \bigwedge^{i_1}W_1\otimes\cdots\otimes \bigwedge^{i_k}W_k\otimes \bigwedge^{i_{k+1}}W_1^*\otimes\cdots\otimes \bigwedge^{i_{2k}}W_k^*
\right),
\]
where the sum runs over all $2k$-partitions $I$ of $2n$ and $W_j=W$ for all $j$. We introduced the $W_j$ to be able to distinguish between 
$\bigwedge^{2n}W_1$ and $\bigwedge^{2n}W_2$, etc. 
Note that if $I$ is a $k$-partition of $2n$, i.e., $i_{k+1}=\ldots= i_{2n}=0$, there is a natural embedding
\[
\textstyle\iota_{I}\colon\bigwedge^{2n}W\lhook\joinrel\longrightarrow \bigwedge^{i_1}W_1\otimes\cdots\otimes \bigwedge^{i_k}W_k.
\]
This follows from the fact that the image of $\bigwedge^{2n}W\longhookrightarrow W^{\bigotimes 2n}$ is contained in the image of the natural embedding $\bigwedge^{i_1}W_1\otimes\cdots\otimes \bigwedge^{i_k}W_k \longhookrightarrow W^{\bigotimes 2n}$. 
As one sees, $\iota_I$ is compatible with the natural action of $\mathrm{SL}(W,\C)$. Hence, its image determines (up to a complex scalar) an $\mathrm{SL}(W,\C)$-invariant class. Denote this class as $(\det W)_I$ and consider it as an element in $\bigwedge^{2n}\left((W\oplus W^*)^{\oplus k}\right)$. We call it a \textit{realization} of $\det W$. 
We then call
\[
\{\textstyle(\det W)_I\;|\; I\text{ is a } k\text{-partition of } 2n\}\subseteq \bigwedge^{2n}\left((W\oplus W^*)^{\oplus k}\right)
\]
the \textit{set of all realizations} of $\det W$ in $\bigwedge^{2n}\left((W\oplus W^*)^{\oplus k}\right)$. 
Similarly, we introduce the notion of \textit{realizations} of $\det W^*$.

\smallskip

To give an example, let $k=2$, $2n=4$, and consider the $2$-partition of $4$ given by $I=(2,2)$. Then, denoting by $v_1,\ldots, v_4$ a basis of $W$, the realization $(\det W)_I$ in $\bigwedge^{4}\left((W\oplus W^*)\right)^{\oplus 2}$ is given by the image of
\begin{align*}
 \textstyle\bigwedge^4W &\textstyle\longhookrightarrow \bigwedge^2W_1\otimes\bigwedge^2W_2\\
  v_1\wedge\cdots\wedge v_4&\longmapsto \sum\pm (v_i\wedge v_j)\otimes (v_k\wedge v_l),
\end{align*}
where the sum runs over all $i<j,k<l$ such that $\{i,j,k,l\}=\{1,2,3,4\}$.
\end{rmk}

With this notion, we can  now state and prove the following extension of Theorem \ref{Thm: invariants under SL}.

\begin{cor}
\label{Cor: invariants under SL extended version}
Let $W$ be a $2n$-dimensional complex vector space and denote by $W^*$ its dual. Then, for every positive integer $k$, any invariant in
\[
\textstyle\left(\bigwedge^{\bullet}((W\oplus W^*)^{\oplus k})\right)^{\mathrm{SL}(W,\C)}
\]
can be expressed in terms of invariants of degree two and all realizations of $\det W$ and $\det W^*$ in $\bigwedge^{*}\left((W\oplus W^*)^{\oplus k}\right)$ defined in Remark \ref{rmk.: all possible realizations of determinants}.
\begin{proof}
Let $s$ and $k$ be two positive integers. As in Remark \ref{rmk.: all possible realizations of determinants}, we decompose the space $\bigwedge^s\left((W\oplus W^*)^{\oplus k}\right)$ as
\[
\textstyle
\bigwedge^{s} \left( (W\oplus W^*)^{\oplus k} \right)=\bigoplus \left( \bigwedge^{i_1}W_1\otimes\cdots\otimes \bigwedge^{i_k}W_k\otimes \bigwedge^{i_{k+1}}W_1^*\otimes\cdots\otimes \bigwedge^{i_{2k}}W_k^*
\right),
\]
where the direct sum runs over all $2k$-partitions of $s$, and $W_j=W$ for all $j$. For any $2k$-partition $I$ of $s$, denote by $Z_{I,s}$ the corresponding direct summand of the decomposition.
Since the action of $\mathrm{SL}(W,\C)$ preserves this decomposition, an element in $\bigwedge^s\left((W\oplus W^*)^{\oplus k}\right)$ is $\mathrm{SL}(W,\C)$-invariant if and only if its component in $Z_{I,s}$ is invariant for every $2k$-partition $I$ of $s$. Therefore, it suffices to study the invariants in each $Z_{I,s}$.
To this end, we first embed each $Z_{I,s}$ individually into $\bigotimes^s(W\oplus W^*)$ and then apply Theorem \ref{Thm: invariants under SL}:

\smallskip

Let $I=(i_1,\ldots, i_{2k})$ be a fixed $2k$-partition of $s$. Let $s'\coloneqq i_1+\cdots+i_k$, and consider the natural $\mathrm{SL}(W,\C)$-invariant embedding of $ Z_{I,s}$
\begin{equation}
    \label{eq.: embedding of Z_I,s}
    \textstyle
Z_{I,s}=\bigwedge^{i_1}W_1\otimes\cdots\otimes \bigwedge^{i_k}W_k\otimes \bigwedge^{i_{k+1}}W_1^*\otimes\cdots\otimes \bigwedge^{i_{2k}}W_k^*\longhookrightarrow W^{\otimes s'}\otimes(W^*)^{\otimes (s-s')}
\end{equation}
given by the tensor product of the canonical embeddings 
\[
\textstyle
\bigwedge^{i_j}W\longhookrightarrow W^{\otimes i_j} \;\text{and}\; \bigwedge^{i_j}W^*\longhookrightarrow (W^*)^{\otimes i_j}.
\]
From now on, we identify $Z_{I,s}$ and its image. Note that an element of $W^{\otimes s'}\otimes(W^*)^{\otimes (s-s')}$ belongs to $Z_{I,s}$ if and only if it is $I$-alternating.

\smallskip

Let us first deal with the case where $s<2n$: Since $\det W$ and $\det W^*$ are elements of $(W\oplus W^*)^{\otimes 2n}$, Theorem \ref{Thm: invariants under SL} shows that the vector space of invariants in $W^{\otimes s'}\otimes (W^*)^{\otimes s-s'}$ 
is zero if $s-s'\not =s'$, and it is generated by complete contractions if $s-s'=s'$.
Therefore, we conclude that $Z_{I,s}$ does not contain any non-trivial invariant if $s-s'\not= s'$ and it is generated by linear combinations of complete contractions which are $I$-alternating if $s-s'=s'$.

\smallskip

For $s=2n$, Theorem \ref{Thm: invariants under SL} shows that the vector space of invariants in $W^{\otimes s'}\otimes(W^*)^{\otimes (2n-s')}$ is generated by $\det W$ if $s'=2n$, by $\det W^*$ if $s'=0$, by complete contractions of degree $2n$ if $s'=n$, and it is zero in all other cases. 
In the case $s'=2n$, we have
\[
\textstyle Z_{I,2n}=\bigwedge^{i_1}W_1\otimes\cdots\otimes \bigwedge^{i_k}W_k\subseteq W^{\otimes 2n}.
\]
Since $\det W$ is $I$-invariant, we conclude that $\det W\in Z_{I,2n}$ and that the vector space of invariants in $Z_{I,2n}$ is generated by $\det W$. Similarly, if $s'=0$, one sees that $\det W^*$ generate the ring of invariants in $Z_{I,2n}$. Finally, if $s'=n$, one sees that the ring of invariants in $Z_{I,2n}$ is
generated by $I$-alternating linear combinations of complete contractions, as in the previous case.

\smallskip

For $s>2n$, the invariants in $Z_{I,s}$ can be expressed in terms of invariants of $Z_{\tilde{I},\tilde{s}}$ for $\tilde{s}\leq 2n$. This follows from the fact that invariants in  $\left(\bigotimes^{\bullet}( W\oplus W^*)\right)^{\mathrm{SL}(W,\C)}$ can be expressed in terms of invariants of degree $\leq 2n$.

\smallskip

To sum up, we proved that invariants in $\textstyle\left(\bigwedge^{\bullet}((W\oplus W^*)^{\oplus k})\right)^{\mathrm{SL}(W,\C)}$
can be expressed in terms of linear combinations of complete contractions and the images the maps $\bigwedge^{2n} W\longhookrightarrow Z_{I,2n}$ for all $I=(i_1,\ldots,i_{2k})$ such that $i_1+\cdots+i_k=2n$ and $\bigwedge^{2n} W^*\longhookrightarrow Z_{I,2n}$ for all $I=(i_1,\ldots,i_{2k})$ such that $i_1+\cdots+i_k=0$.

\smallskip

Similarly to Remark \ref{rmk.: complete contractions and degree-two complete contractions}, any linear combination of complete contractions can be written as a linear combination of wedge products of degree-two complete contractions in $\bigwedge^2\left( (W\oplus W^*)^{\oplus k}\right)$.
Therefore, to conclude the proof, it suffices to note that the set of all images of the maps $\bigwedge^{2n} W\longhookrightarrow Z_{I,2n}$ for all $I=(i_1,\ldots,i_{2k})$ such that $i_1+\cdots+i_k=2n$ is the set of all realizations of $\det W$ defined
in Remark \ref{rmk.: all possible realizations of determinants}, and similarly for $\det W^*$.
\end{proof}
\end{cor}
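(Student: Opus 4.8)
The plan is to reduce the exterior-algebra statement to the tensor-algebra statement of Theorem \ref{Thm: invariants under SL} by exploiting the $\mathrm{SL}(W,\C)$-equivariant decomposition of $\bigwedge^{\bullet}((W\oplus W^*)^{\oplus k})$ together with the equivariant inclusion $\bigwedge^{\bullet}\hookrightarrow\bigotimes^{\bullet}$. First I would fix a degree $s$ and decompose $\bigwedge^{s}((W\oplus W^*)^{\oplus k})$ into the summands $Z_{I,s}=\bigwedge^{i_1}W_1\otimes\cdots\otimes\bigwedge^{i_k}W_k\otimes\bigwedge^{i_{k+1}}W_1^*\otimes\cdots\otimes\bigwedge^{i_{2k}}W_k^*$ indexed by the $2k$-partitions $I$ of $s$, exactly as in Remark \ref{rmk.: all possible realizations of determinants}. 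Since $\mathrm{SL}(W,\C)$ acts diagonally and preserves this decomposition, an element is invariant if and only if each of its components in the $Z_{I,s}$ is invariant, so it suffices to classify the invariants in a single summand $Z_{I,s}$.

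Next I would embed each $Z_{I,s}$ equivariantly into $W^{\otimes s'}\otimes(W^*)^{\otimes(s-s')}$, with $s'\coloneqq i_1+\cdots+i_k$, via the tensor product of the canonical inclusions $\bigwedge^{i}W\hookrightarrow W^{\otimes i}$ and $\bigwedge^{i}W^*\hookrightarrow (W^*)^{\otimes i}$, identifying $Z_{I,s}$ with the space of $I$-alternating tensors. Now Theorem \ref{Thm: invariants under SL} applies to the ambient tensor space: its invariants are spanned by complete contractions (which exist only when the number of $W$- and $W^*$-factors agree, i.e. $2s'=s$) together with $\det W$ and $\det W^*$, both living in degree $2n$. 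Intersecting with the $I$-alternating subspace, the invariants in $Z_{I,s}$ are then the $I$-alternating linear combinations of complete contractions, plus, in the degree $s=2n$ cases with $s'=2n$ (resp. $s'=0$), the image of the map $\bigwedge^{2n}W\hookrightarrow Z_{I,2n}$ (resp. $\bigwedge^{2n}W^*\hookrightarrow Z_{I,2n}$). These images are by definition the realizations of $\det W$ and $\det W^*$.

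It then remains to dispose of the degrees $s>2n$ and to recast everything in degree two. For $s>2n$ I would use that invariants in $\bigotimes^{\bullet}(W\oplus W^*)$ are expressible in terms of generators of degree $\le 2n$, which throws the classification back onto the summands $Z_{\tilde I,\tilde s}$ with $\tilde s\le 2n$ already handled. Finally, as in Remark \ref{rmk.: complete contractions and degree-two complete contractions}, every complete contraction is, up to permuting factors, a tensor product of degree-two complete contractions; antisymmetrizing realizes these as wedge products of degree-two invariants in $\bigwedge^{2}((W\oplus W^*)^{\oplus k})$. Assembling over all $I$ and $s$ then yields that every invariant is generated by degree-two classes and the realizations of the two determinants.

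I expect the main obstacle to be the case analysis in degree $2n$: one must verify that passing to the $I$-alternating subspace produces no invariants beyond the antisymmetrizations of the generators of Theorem \ref{Thm: invariants under SL}, and in particular that the genuinely exceptional determinant realizations appear only for $s'\in\{0,2n\}$, whereas the $s'=n$ contributions survive merely as (alternating) complete contractions and not as new exceptional classes. Checking that the antisymmetrization projector carries the tensor-algebra spanning set onto a spanning set of each $Z_{I,s}$, and correctly identifying the surviving determinant pieces with the realizations of Remark \ref{rmk.: all possible realizations of determinants}, is where the real content lies.
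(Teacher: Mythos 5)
Your proposal is correct and follows essentially the same route as the paper's own proof: the same decomposition into the summands $Z_{I,s}$, the same equivariant embedding into $W^{\otimes s'}\otimes(W^*)^{\otimes(s-s')}$ identifying $Z_{I,s}$ with the $I$-alternating tensors, the same case analysis on $s$ and $s'$ via Theorem \ref{Thm: invariants under SL}, and the same reduction of complete contractions to degree-two invariants. The ``main obstacle'' you flag at the end is handled in the paper exactly as you anticipate, by observing that the invariants of $Z_{I,s}$ are precisely the $I$-alternating elements of the tensor-space invariants, so the determinant contributions occur only for $s=2n$ with $s'\in\{0,2n\}$ and coincide with the realizations of Remark \ref{rmk.: all possible realizations of determinants}.
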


Before applying Corollary \ref{Cor: invariants under SL extended version} to study Hodge classes for powers of abelian varieties of Weil type, let us define the set of realizations of exceptional classes:
\begin{rmk}
\label{rmk.: all possible realization of the exceptional classes}
Let $A$ be an abelian variety of $K$-Weil type of dimension $2n$. As in Remark \ref{Rmk.: exceptional classes on A}, we identify $\bigwedge_{K}^{2n} H^1(A,\Q)$ with the set of exceptional classes on $A$ via the natural embedding
\[
\epsilon\colon\textstyle \bigwedge_{K}^{2n} H^1(A,\Q)\longhookrightarrow \bigwedge^{2n} H^1(A,\Q) \simeq H^{2n}(A,\Q).
\]
For any integer $k>1$, similarly to Remark \ref{rmk.: all possible realizations of determinants}, consider various embeddings of $\bigwedge^{2n} H^1(A,\Q)$ into $\bigwedge^{2n} (H^1(A,\Q)^{\oplus k})\simeq H^{2n}(A^k,\Q)$:
If $I=(i_1,\ldots,i_k)$ is a $k$-partition of $2n$, denote by
\[
\textstyle\iota_I\colon \bigwedge^{2n}H^1(A,\Q)\lhook\joinrel\longrightarrow \bigwedge^{i_1}H^1(A,\Q)\otimes\cdots\otimes\bigwedge^{i_k}H^1(A,\Q)\subseteq H^{2n}(A^k,\Q),
\]
the corresponding embedding as in Remark \ref{rmk.: all possible realizations of determinants}. As $\iota_I$ is a morphisms of Hodge structures, $\iota_I(\alpha)$ is a Hodge class for every $\alpha\in \bigwedge_{K}^{2n} H^1(A,\Q)$. We call $\iota_I(\alpha)$ \textit{a realization} of $\alpha$ on $A^k$. Finally, the set of \textit{all realizations} on $A^k$ of the exceptional classes is the set
\[
\{\iota_I(\alpha)\;|\; \alpha\in\textstyle \bigwedge_{K}^{2n} H^1(A,\Q), \text{and } I\; \text{is a } k \text{-partition of } 2n\}\subseteq H^{2n}(A^k,\Q).
\]
\end{rmk}

The following result describes the set of Hodge classes on the powers of $A$ in terms of realizations of exceptional classes of $A$.

\begin{lem}
\label{Lem.: Hodge classes powers of abelian var of weil type}
Let $A$ be a general abelian variety of $K$-Weil type of dimension $2n$. Then, for every positive integer $k$, any Hodge class on $A^k$ can be expressed in terms of rational $(1,1)$-classes and all realizations of the exceptional classes on $A^k$.
\begin{proof}
Let $k$ be a positive integer. As $A$ is a general abelian variety of Weil type, the algebra of Hodge classes on $A^k$ satisfies the following:
\begin{equation}
    \label{eq. Hodge classes A^k}
    \textstyle\left(\bigwedge^{\bullet}(H^1(A,\Q)^{\oplus k})\right)^{\mathrm{Hdg}(A)}\otimes_\Q\C\simeq \textstyle\left(\bigwedge^{\bullet}((W\oplus W^*)^{\oplus k})\right)^{\mathrm{SL}(W,\C)}.
\end{equation}
By Corollary \ref{Cor: invariants under SL extended version}, any invariant in this algebra can be expressed in terms of invariants of degree two and realizations of $\det W$ and $\det W^*$. The $\C$-linear span of invariants of degree two is equal to the $\C$-linear span of rational $(1,1)$-classes on $A^k$, hence, it suffices to show that the $\C$-linear span of the set of all realizations of $\det W$ and $\det W^*$ in $\bigwedge^{\bullet}((W\oplus W^*)^{\oplus k})$ is equal to the $\C$-linear span of the set of all realizations of the exceptional classes on $A^k$ defined in Remark $\ref{rmk.: all possible realization of the exceptional classes}$. This follows from the fact that, if $I$ is any $k$-partition of $2n$, the isomorphism in (\ref{eq. Hodge classes A^k}) sends the $\C$-linear span of $\iota_I\left(\bigwedge^{2n}_K H^1(A,\Q)\right)\subseteq H^{2n}(A^k,\Q)$ onto the
$\C$-linear span of \[\textstyle\iota_I\left(\langle \bigwedge^{2n} W,\bigwedge^{2n}W^*\rangle\right)\subseteq \bigwedge^{2n}\left((W\otimes W^*)^{\oplus k}\right).\qedhere\]
\end{proof}
\end{lem}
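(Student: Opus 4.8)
The plan is to deduce the statement from the purely invariant-theoretic Corollary \ref{Cor: invariants under SL extended version} by transporting its conclusion through the extension-of-scalars isomorphism for the ring of $\mathrm{Hdg}(A)$-invariants. First I would record that, since $A$ is general of Weil type, Proposition \ref{Prop.: Hodge group of a general abelian variety of Weyl type} identifies $\mathrm{Hdg}(A)(\C)$ with $\mathrm{SL}(W,\C)$ acting on $H^1(A,\C)\simeq W\oplus W^*$; complexifying the $\mathrm{Hdg}(A)$-invariants in $\bigwedge^{\bullet}(H^1(A,\Q)^{\oplus k})$ then gives the $\mathrm{SL}(W,\C)$-invariants in $\bigwedge^{\bullet}((W\oplus W^*)^{\oplus k})$, which is the isomorphism (\ref{eq. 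Hodge classes A^k}). By the same extension-of-scalars principle used in Lemma \ref{Lem: extension of scalars}, a family of homogeneous rational classes generates the rational invariant algebra as soon as its image generates the complexified algebra in the sense of Remark \ref{rmk.: meaning of expressed in terms of}; hence it suffices to exhibit, on the complex side, a generating set consisting of the complexifications of rational $(1,1)$-classes and of the realizations of the exceptional classes.

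Next I would invoke Corollary \ref{Cor: invariants under SL extended version}: every $\mathrm{SL}(W,\C)$-invariant in $\bigwedge^{\bullet}((W\oplus W^*)^{\oplus k})$ can be expressed in terms of invariants of degree two together with all realizations of $\det W$ and $\det W^*$. The degree-two invariants are the complete contractions in $\bigwedge^{2}((W\oplus W^*)^{\oplus k})$, and their $\C$-span is precisely the complexification of the space of rational $(1,1)$-classes on $A^k$; this accounts for the first family of generators in the statement. Since, by multilinearity, replacing a generating family by any other family with the same $\C$-span again yields a generating family in the sense of Remark \ref{rmk.: meaning of expressed in terms of}, it remains only to match the $\C$-span of the realizations of $\det W$ and $\det W^*$ with that of the realizations of the exceptional classes.

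This last matching is the heart of the argument and the step I expect to require the most care. I would carry it out one $k$-partition $I$ of $2n$ at a time, using that both the realizations $\iota_I\bigl(\bigwedge^{2n}_K H^1(A,\Q)\bigr)$ of Remark \ref{rmk.: all possible realization of the exceptional classes} and the realizations $\iota_I\bigl(\langle \det W,\det W^*\rangle\bigr)$ of Remark \ref{rmk.: all possible realizations of determinants} are obtained by applying the \emph{same} combinatorial embedding $\iota_I$ — built from the canonical maps $\bigwedge^{i_j}\hookrightarrow\bigotimes^{i_j}$ and the reindexing of factors — to the two sides of the $k=1$ identification $\bigwedge^{2n}_K H^1(A,\Q)\otimes_\Q\C\simeq\langle\det W,\det W^*\rangle_\C$ recorded in Remark \ref{rmk.: Hodge classes abelian var of weil type}. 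The point to verify is that $\iota_I$ commutes with extension of scalars and with the identification $H^1(A,\C)\simeq W\oplus W^*$, so that the isomorphism (\ref{eq. Hodge classes A^k}) carries the $\C$-span of $\iota_I\bigl(\bigwedge^{2n}_K H^1(A,\Q)\bigr)$ onto the $\C$-span of $\iota_I\bigl(\langle\det W,\det W^*\rangle\bigr)$. Granting this compatibility and summing over all $k$-partitions $I$ of $2n$, the two $\C$-spans agree, which together with the identification of the degree-two part completes the proof.
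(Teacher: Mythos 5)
Your proposal is correct and follows essentially the same route as the paper: you pass to $\mathrm{SL}(W,\C)$-invariants via the extension-of-scalars isomorphism, apply Corollary \ref{Cor: invariants under SL extended version}, identify the degree-two invariants with the $(1,1)$-classes, and match the realizations of $\det W$, $\det W^*$ with the realizations of the exceptional classes partition by partition. Your explicit attention to the compatibility of $\iota_I$ with complexification is exactly the point the paper's last sentence asserts, so there is no gap to report.
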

In particular, to show that the Hodge conjecture for $A$ implies the Hodge conjecture for all powers of $A$, it suffices to show that if an exceptional class is algebraic on $A$ then all its realizations are algebraic. 
The following lemma gives some relations between the realizations of the exceptional classes:
\begin{lem}
\label{Lem.: realizations of exceptional classes are related}
Let $A$ be a general abelian variety of $K$-Weil type of dimension $2n$, and let $\alpha$ be an exceptional class on $A$. Denote by $\widetilde{\alpha}$ the realization of $\alpha$ on $A^{2n}$ via the embedding
\[
\textstyle
\bigwedge^{2n}_{K}H^1(A,\Q)\longhookrightarrow H^1(A,\Q)\otimes\cdots\otimes H^1(A,\Q)\subseteq H^{2n}(A^{2n},\Q).
\]
Then, for any positive integer $k$, and any $k$-partition $I$ of $2n$, the realization $\alpha_I$ on $A^k$ is the pullback of $\widetilde{\alpha}$, via an algebraic map $A^k\longrightarrow A^{2n}$. 
In particular, if $\widetilde{\alpha}$ is algebraic on $A^{2n}$, then any realization of $\alpha$ on any power of $A$ is algebraic.
\begin{proof}
Writing $\alpha$ as a sum of decomposable elements of $\bigwedge^{2n}H^1(A,\Q)$, we see that it suffices to show that, for any decomposable element $\beta\coloneqq v_1\wedge\ldots \wedge v_{2n}\in \bigwedge^{2n}H^1(A,\Q)$, any realization of $\beta$ on $A^k$ is the pullback via an algebraic map $A^k\longrightarrow A^{2n}$ of 
\[\textstyle\widetilde{\beta}=\sum_\s \pm v_{\s(1)}\otimes\cdots\otimes v_{\s(2n)},\]
where the sum runs over all permutations $\s$ of $\{1,\ldots,2n\}.$ 
Let $I=(i_1,\ldots,i_k)$ be a $k$-partition of $2n$, and let $\beta_I$ be the realization of $\beta$ on $A^k$ via the embedding
\[
\textstyle
\bigwedge^{2n}_{K}H^1(A,\Q)\longhookrightarrow \bigwedge^{i_1}H^1(A,\Q)\otimes \cdots\otimes \bigwedge^{i_k}H^1(A,\Q)\subseteq H^{2n}(A^k,\Q).
\]
Then, one sees that a multiple of $\beta_I$ is equal to the pullback of $\widetilde{\beta}$ via the map 
\[\Delta_{i_1}\times \cdots\times\Delta_{i_k}\colon A^{k}\longrightarrow A^{2n},\]
where $\Delta_{i_j}$ is the diagonal map $A\longrightarrow A^{i_j}$.
\end{proof}
\end{lem}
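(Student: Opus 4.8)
The plan is to reduce the statement to a single decomposable exceptional class and then exhibit every realization $\alpha_I$ as the pullback of the fully split realization $\widetilde\alpha$ along an explicit homomorphism of abelian varieties assembled from diagonal maps. First I would use linearity: since $\bigwedge^{2n}_K H^1(A,\Q)$ sits inside $\bigwedge^{2n}_\Q H^1(A,\Q)$, any exceptional class $\alpha$ is a finite $\Q$-linear combination of decomposable vectors $\beta = v_1 \wedge \cdots \wedge v_{2n}$ with $v_i \in H^1(A,\Q)$. Both the fully split realization on $A^{2n}$ and the realization on $A^k$ attached to a $k$-partition $I$ are induced by morphisms of Hodge structures, hence are $\Q$-linear, so it suffices to prove the pullback identity for each such $\beta$.

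For a fixed $k$-partition $I = (i_1,\ldots,i_k)$ of $2n$ I would introduce the morphism
\[
f \coloneqq \Delta_{i_1} \times \cdots \times \Delta_{i_k} \colon A^k \to A^{2n},
\]
where $\Delta_{i_j} \colon A \to A^{i_j}$ embeds the $j$-th factor diagonally into the block of $i_j$ consecutive copies of $A$ inside $A^{2n}$. This is a homomorphism of abelian varieties, in particular algebraic, so $f^*$ sends algebraic classes to algebraic classes. The computational heart is the identification of $f^*\widetilde\beta$. Starting from
\[
\widetilde\beta = \sum_\s \mathrm{sgn}(\s)\, v_{\s(1)} \otimes \cdots \otimes v_{\s(2n)} \in H^1(A,\Q)^{\otimes 2n} \subseteq H^{2n}(A^{2n},\Q),
\]
with the $m$-th tensor slot living in the $m$-th copy of $A$, I would use that pulling back along a diagonal is cup product, i.e. $\Delta_{i_j}^*(\mathrm{pr}_1^* x_1 \cup \cdots \cup \mathrm{pr}_{i_j}^* x_{i_j}) = x_1 \cup \cdots \cup x_{i_j}$. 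Since classes in $H^1$ anticommute under cup product, cupping within the $j$-th block carries $H^1(A,\Q)^{\otimes i_j}$ onto $\bigwedge^{i_j} H^1(A,\Q)$. Performing this block by block and summing over $\s$ shows that $f^*\widetilde\beta$ lands in $\bigwedge^{i_1} H^1(A,\Q) \otimes \cdots \otimes \bigwedge^{i_k} H^1(A,\Q) \subseteq H^{2n}(A^k,\Q)$ and equals $\beta_I$ up to the nonzero combinatorial factor $\prod_j i_j!$ coming from the permutations internal to each block, whose signs collate coherently with $\mathrm{sgn}(\s)$. Hence $\beta_I$, and so $\alpha_I$, is a nonzero rational multiple of $f^*\widetilde\alpha$.

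I would then conclude as follows. If $\widetilde\alpha$ is represented by an algebraic cycle on $A^{2n}$, then $f^*\widetilde\alpha$ is algebraic on $A^k$, and therefore so is its nonzero rational multiple $\alpha_I$. To treat realizations on an arbitrary power $A^m$ rather than only on $A^k$ with $k$ equal to the number of parts, I would precompose with the projection $A^m \to A^k$ onto the copies actually carrying a nontrivial wedge factor; pullback along this projection again preserves algebraicity, so the general case reduces to the one above.

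The step I expect to be most delicate is the cohomological pullback computation: one must track carefully the signs produced by reordering anticommuting $H^1$-classes inside each block and verify that the internal permutations contribute exactly the factor $\prod_j i_j!$, so that the scalar relating $f^*\widetilde\beta$ and $\beta_I$ is genuinely nonzero and can be inverted. Everything else—the reduction to decomposables and the stability of algebraicity under pullback—is formal.
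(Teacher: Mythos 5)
Your proposal is correct and follows essentially the same route as the paper: reduce to decomposable classes by linearity, pull back $\widetilde\beta$ along the product of diagonals $\Delta_{i_1}\times\cdots\times\Delta_{i_k}\colon A^k\to A^{2n}$, and identify the result with a nonzero multiple of $\beta_I$. You merely spell out the cup-product and sign bookkeeping (the factor $\prod_j i_j!$) that the paper compresses into ``one sees that,'' and your computation is accurate.
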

We have now everything needed to prove that the Hodge conjecture for a general abelian variety of Weil type implies the Hodge conjecture for all its powers.
\begin{thm}\cite{abdulali1999abelianIII}
\label{Thm.: H.c. for powers of general ab var of weil type}
Let $A$ be a general abelian variety of $K$-Weil type. Then, the Hodge conjecture for $A$ implies the Hodge conjecture for all powers $A^k$.
\begin{proof}
By Lemma \ref{Lem.: Hodge classes powers of abelian var of weil type}, for any positive integer $k$, any Hodge class on $A^k$ can be expressed in terms of rational $(1,1)$-classes and realizations of the exceptional classes on $A^k$. Since rational $(1,1)$-classes are algebraic by the Lefschetz $(1,1)$ theorem, to prove the Hodge conjecture for all powers of $A$, we need to show that all realizations of the exceptional classes on the powers of $A$ are algebraic. By Lemma \ref{Lem.: realizations of exceptional classes are related}, it suffices to show that for any exceptional class $\alpha$ on $A$, its realization $\widetilde{\alpha}$ via the map 
\[
\textstyle\bigwedge_K^{2n} H^1(A,\Q)\longhookrightarrow H^1(A,\Q)^{\otimes 2n}\subseteq H^{2n}(A^{2n},\Q)
\]
is algebraic.

\smallskip

To do this, consider the following maps: For $J\subseteq\{1,\ldots,2n\}$, let $p_J\colon A^{2n}\longrightarrow A^{|J|}$ be the projection from $A^{2n}$ onto the $J$-th components, and, for $i\geq 1$, let $\Sigma_i\colon A^i\longrightarrow A$ be the summation map. In cohomology, the pullback via $\Sigma_i$ is
\[
\Sigma_i^*\colon H^1(A,\Q)\longrightarrow H^1(A,\Q)^{\oplus i}, \quad v\longmapsto (v,\ldots,v).
\]
With this notation, we will prove that, for every $\beta\in \bigwedge^{2n}H^1(A,\Q)$, the following equality holds:
\begin{equation}
\label{Eq.: relating different determinants}
   \Sigma_{2n}^*(\beta)=\sum_{\emptyset\not =J\subsetneq\{1,\ldots,2n\}} (-1)^{|J|-1}p_J^*(\Sigma_{|J|}^*(\beta))+\widetilde{\beta}\subseteq\textstyle H^{2n}(A^{2n},\Q),
\end{equation}
where $\widetilde{\beta}$ is the image of $\beta$ via the natural map $\bigwedge^{2n}H^1(A,\Q)\longhookrightarrow H^1(A,\Q)^{\otimes 2n}\subset H^{2n}(A^{2n},\Q)$.
For $\beta=\alpha$, we will then conclude from (\ref{Eq.: relating different determinants}) that $\widetilde{\alpha}$ is algebraic since $\alpha$ is assumed to be algebraic and all the maps involved are algebraic.

\smallskip
Note that by linearity, it suffices to prove
(\ref{Eq.: relating different determinants}) in the case where $\beta$ is decomposable, i.e., $\beta= v_1\wedge\cdots\wedge v_{2n}\in \bigwedge^{2n}H^1(A,\Q)$. 
By the commutativity of pullbacks and cup products, we have the following equalities:
\[
\Sigma_{2n}^*(\beta)=\Sigma^*_{2n}(v_1)\wedge\cdots\wedge \Sigma^*_{2n}(v_{2n})=(v_1,\ldots,v_1)\wedge\cdots\wedge (v_{2n},\ldots,v_{2n})=\textstyle \sum (p_{i_1}^*(v_1)\wedge\ldots\wedge p_{i_{2n}}^*(v_{2n})),
\]
where the sum runs over all $i_1,\ldots,i_{2n}\in \{1,\ldots,2n\}$.
To prove (\ref{Eq.: relating different determinants}), we show that each of these summands appears exactly once on the right-hand-side of (\ref{Eq.: relating different determinants}) after simplifying it. Let us start with the summands for which all $i_j$ are different. These terms do not appear in the sum on the right-hand side of the equality, indeed they do not come from the pullback under the projection onto some lower power of $A$, but they appear exactly once in $\widetilde{\beta}$ since by construction
\[
\textstyle
\widetilde{\beta}=\sum_\s \pm v_{\sigma(1)}\otimes\cdots\otimes v_{\sigma(2n)}=\sum_\s \pm p_1^*( v_{\sigma(1)})\wedge\cdots\wedge p_{2n}^*(v_{\sigma(2n)}).
\]
If $i_j$ are not all different, let $I\coloneqq\{i_1,\ldots,i_{2n}\}$. The term $ p_{i_1}^*(v_1)\wedge\ldots\wedge p_{i_{2n}}^*(v_{2n})$ appears on the right-hand side of (\ref{Eq.: relating different determinants}) once for every $J$ such that $I\subseteq J$. Therefore, taking into account the sign $(-1)^{|J|-1}$ the term $ p_{i_1}^*(v_1)\wedge\ldots\wedge p_{i_{2n}}^*(v_{2n})$ appears exactly once, since
\[
\sum_{i=|I|}^{2n-1} (-1)^{i-1}\binom{2n-|I|}{i-|I|}=1.
\]
This concludes the proof of (\ref{Eq.: relating different determinants}).
\end{proof}
\end{thm}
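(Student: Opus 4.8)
The plan is to reduce the Hodge conjecture for all powers $A^k$ to a single algebraicity statement on $A^{2n}$ by means of the two structural lemmas already established, and then to prove that statement by an inclusion--exclusion argument built from summation maps.

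First I would apply Lemma \ref{Lem.: Hodge classes powers of abelian var of weil type}, which expresses every Hodge class on $A^k$ in terms of rational $(1,1)$-classes and realizations of exceptional classes. The $(1,1)$-classes are algebraic by the Lefschetz $(1,1)$-theorem, so the Hodge conjecture for $A^k$ becomes equivalent to the algebraicity of all realizations of exceptional classes on all powers of $A$. Lemma \ref{Lem.: realizations of exceptional classes are related} then reduces this further: every such realization is the pullback, along an algebraic diagonal map, of a single realization $\widetilde{\alpha}$ of an exceptional class $\alpha$ on $A^{2n}$, coming from $\bigwedge^{2n}_K H^1(A,\Q)\hookrightarrow H^1(A,\Q)^{\otimes 2n}\subseteq H^{2n}(A^{2n},\Q)$. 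Hence it suffices to prove that $\widetilde{\alpha}$ is algebraic for each exceptional $\alpha$. Since we assume the Hodge conjecture for $A$, and exceptional classes are Hodge classes on $A$ by Remark \ref{Rmk.: exceptional classes on A}, the class $\alpha$ itself---regarded in $\bigwedge^{2n}H^1(A,\Q)=H^{2n}(A,\Q)$---is algebraic.

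The main obstacle is this last step: bridging the gap between $\widetilde{\alpha}$, a sum of \emph{tensors} in $H^1(A,\Q)^{\otimes 2n}$, and the algebraic \emph{wedge} class $\alpha$ on the single factor $A$. My plan is to introduce the summation maps $\Sigma_i\colon A^i\to A$, whose pullback sends $v\mapsto(v,\ldots,v)$, together with the coordinate projections $p_J$. For a decomposable $\beta=v_1\wedge\cdots\wedge v_{2n}$, expanding $\Sigma_{2n}^*(\beta)=\Sigma_{2n}^*(v_1)\wedge\cdots\wedge\Sigma_{2n}^*(v_{2n})$ produces all monomials $p_{i_1}^*(v_1)\wedge\cdots\wedge p_{i_{2n}}^*(v_{2n})$ indexed by tuples $(i_1,\ldots,i_{2n})$: the multilinear ones, with all indices distinct, recombine into $\widetilde{\beta}$, whereas the degenerate ones are precisely those absorbed by the lower pullbacks $(\Sigma_{|J|}\circ p_J)^*\beta=p_J^*(\Sigma_{|J|}^*\beta)$. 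I would isolate $\widetilde{\beta}$ by inclusion--exclusion over the index set $I$ actually occurring in each monomial, the combinatorics reducing to the identity $\sum_{i=|I|}^{2n-1}(-1)^{i-1}\binom{2n-|I|}{i-|I|}=1$, which is immediate from $\sum_{j=0}^{m}(-1)^j\binom{m}{j}=0$. This is designed to yield the identity
\[
\Sigma_{2n}^*(\beta)=\sum_{\emptyset\neq J\subsetneq\{1,\ldots,2n\}}(-1)^{|J|-1}\,p_J^*\bigl(\Sigma_{|J|}^*(\beta)\bigr)+\widetilde{\beta}.
\]
Specializing to $\beta=\alpha$, every term other than $\widetilde{\alpha}$ is the pullback of the algebraic class $\alpha$ along an algebraic morphism, hence algebraic; it follows that $\widetilde{\alpha}$ is algebraic, which completes the argument.
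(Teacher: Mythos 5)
Your proposal is correct and follows essentially the same route as the paper: the same two reduction lemmas, the same summation-map/inclusion--exclusion identity $\Sigma_{2n}^*(\beta)=\sum_{\emptyset\neq J\subsetneq\{1,\ldots,2n\}}(-1)^{|J|-1}p_J^*(\Sigma_{|J|}^*(\beta))+\widetilde{\beta}$, and the same binomial identity to verify the coefficients. No gaps.
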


\begin{rmk}
\label{rmk.: comparison with Abdulali}
As mentioned in the introduction, Theorem \ref{Thm.: H.c. for powers of general ab var of weil type} can already be found in \cite{abdulali1999abelianIII}.
However, in the proof, the author applied an incomplete version of Corollary \ref{Cor: invariants under SL extended version} which does not mention the different realizations of the two determinants $\det W$ and $\det W^*$. As we have seen, the different realizations of the two determinants are linked to the existence of the different realizations of the exceptional classes on the powers of the abelian variety. A priori, the fact that if one realization of an exceptional class is algebraic then all the realizations of it are algebraic was not clear to us.
For a motivic proof of Theorem \ref{Lem.: Hodge classes powers of abelian var of weil type} see \cite{milne2021tate}.
\end{rmk}

A result similar to Theorem \ref{Thm.: H.c. for powers of general ab var of weil type} holds also for abelian varieties of Weil type with definite quaternionic multiplication. Note that one needs to add to the proof the same modifications as we introduced in the proof of Theorem \ref{Thm.: H.c. for powers of general ab var of weil type}.

\begin{thm}\cite[Thm.\ 4.1]{abdulali1999abelianIII},\cite[Thm.\ 4.2.1]{schlickewei2010hodge}
\label{Thm.: Hodge conjecture abelian variety of Weil type with definite quaternionic multiplication}
Let $A$ be a general abelian variety of $K$-Weil type with definite quaternionic multiplication. Then, if the Weil classes on $A$ are algebraic, the Hodge conjecture holds for every power $A^k$.
\end{thm}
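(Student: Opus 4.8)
The plan is to run the argument of Theorem \ref{Thm.: H.c. for powers of general ab var of weil type} essentially verbatim, replacing the invariant theory of $\mathrm{SL}$ by that of the orthogonal group. The first step is to identify the complexified Hodge group. Since the quaternionic multiplication is definite, $A$ is of Albert type III, so $H^1(A,\Q)$ is a module over a definite quaternion algebra $D$ equipped with a $D$-skew-Hermitian form coming from the polarization; for a \emph{general} such $A$ the Hodge group is the full unitary group of this form, which complexifies to $\mathrm{Hdg}(A)(\C)\cong \mathrm{SO}(W,\C)$ for a $2n$-dimensional complex quadratic space $W$, with $H^1(A,\C)\cong W\oplus W$ as $\mathrm{Hdg}(A)(\C)$-representation, the two copies arising from $D\otimes_\Q\C\cong M_2(\C)$ (cf. \cite{van1994introduction}). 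This is the analogue of Proposition \ref{Prop.: Hodge group of a general abelian variety of Weyl type}, and it is where the hypothesis \emph{general} enters. Exactly as in Remark \ref{Rmk.: exceptional classes on A} and the totally real K3 case, the Weil classes $\bigwedge^{2n}_K H^1(A,\Q)$ then complexify to the line spanned by the $\mathrm{SO}(W,\C)$-invariant determinant $\det W\in\bigwedge^{2n}W$.

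Second, I would replace Theorem \ref{Thm: invariants under SL} by its orthogonal counterpart, Theorem \ref{Thm: invariants of SO(n)}: the $\mathrm{SO}(W,\C)$-invariants in $\bigotimes^{\bullet}W$ are generated by degree-two complete contractions, formed now with the symmetric form, together with the single determinant $\det W$; there is only one determinant since the symmetric form identifies $W\cong W^*$. From this I would prove the exact analogue of Corollary \ref{Cor: invariants under SL extended version}: for every $k$, any $\mathrm{SO}(W,\C)$-invariant in $\bigwedge^{\bullet}\big(W^{\oplus 2k}\big)$ can be expressed in terms of degree-two invariants and all \emph{realizations} of $\det W$, i.e. the classes obtained by distributing the $2n$ tensor factors of $\det W$ among the $2k$ copies of $W$ and alternating. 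The bookkeeping is identical to that in the proof of Corollary \ref{Cor: invariants under SL extended version}, and in fact simpler: one decomposes $\bigwedge^{s}(W^{\oplus 2k})$ into the summands $Z_{I,s}$ indexed by partitions $I$, embeds each into $\bigotimes^{\bullet}W$, and applies the orthogonal fundamental theorem summand by summand, with a single determinant replacing the pair $\det W,\det W^*$. Transporting this through the isomorphism $\big(\bigwedge^{\bullet}(H^1(A,\Q)^{\oplus k})\big)^{\mathrm{Hdg}(A)}\otimes\C\simeq \big(\bigwedge^{\bullet}(W^{\oplus 2k})\big)^{\mathrm{SO}(W,\C)}$ yields the analogue of Lemma \ref{Lem.: Hodge classes powers of abelian var of weil type}: every Hodge class on $A^k$ is a combination of rational $(1,1)$-classes and realizations of the Weil classes.

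The final step is purely formal and carries over without change. Lemma \ref{Lem.: realizations of exceptional classes are related} only uses that each realization $\alpha_I$ on $A^k$ is the pullback of the single realization $\widetilde\alpha$ on $A^{2n}$ along the diagonal map $\Delta_{i_1}\times\cdots\times\Delta_{i_k}$, so it applies verbatim; and the reduction in Theorem \ref{Thm.: H.c. for powers of general ab var of weil type} applies verbatim as well, since the identity (\ref{Eq.: relating different determinants}) relating $\Sigma_{2n}^*(\beta)$, the projection-pullbacks $p_J^*\Sigma_{|J|}^*(\beta)$, and $\widetilde\beta$ is an identity in $\bigwedge^{\bullet}H^1(A^{2n},\Q)$ that makes no reference to the Hodge group. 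Applying it with $\beta=\alpha$ a Weil class, which is algebraic by hypothesis, and using that all the summation and projection maps are algebraic, shows that $\widetilde\alpha$ is algebraic; by the preceding lemma every realization on every power is then algebraic, and hence so is every Hodge class on $A^k$.

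The hard part is the first two steps, namely pinning down that the relevant group is orthogonal rather than symplectic and re-deriving the realizations corollary for $\mathrm{SO}(W,\C)$. Once the Hodge group is correctly identified as $\mathrm{SO}(W,\C)$, the presence of the exceptional invariant $\det W$, and the absence of any invariant beyond contractions and $\det W$, is exactly what makes the argument parallel the totally real K3 case governed by Theorem \ref{Thm: invariants of SO(n)}; the algebraicity reduction then follows mechanically from the two formal lemmas above.
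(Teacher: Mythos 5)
The paper itself gives no proof of this statement --- it defers to Abdulali and remarks only that ``the same modifications'' as in Theorem \ref{Thm.: H.c. for powers of general ab var of weil type} are needed --- so I compare your sketch against what such an adaptation actually requires. Your first step is essentially right: for a general such $A$ the complexified Hodge group acts on $H^1(A,\C)\cong \C^2\otimes W\cong W\oplus W$ through $\mathrm{SO}(W,\C)$, and the relevant invariant theory is that of Theorem \ref{Thm: invariants of SO(n)} rather than Theorem \ref{Thm: invariants under SL}. The gap is in your second step, the claimed analogue of Lemma \ref{Lem.: Hodge classes powers of abelian var of weil type}. Because $H^1(A,\C)$ already contains \emph{two} copies of $W$, the realizations of $\det W$ on $A^k$ are indexed by $2k$-partitions of $2n$, whereas the realizations of the Weil classes in the sense of Remark \ref{rmk.: all possible realization of the exceptional classes} distribute $\bigwedge^{2n}_K H^1(A,\Q)$ only over the $k$ rational factors $H^1(A,\Q)$; after complexifying they yield exactly those $(\det W)_J$ with $J$ supported on a single $K$-eigencopy $W_\pm\cong W$ inside each factor. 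Already for $k=1$ this is a genuine discrepancy: the exceptional part of $H^{n,n}(A,\Q)\otimes\C$ is $\mathrm{Sym}^{2n}(\C^2)\otimes\det W$, of dimension $2n+1$, while the $K$-Weil classes span only the two-dimensional subspace $\bigwedge^{2n}W_+\oplus\bigwedge^{2n}W_-$; for a fourfold, say, there are three further exceptional Hodge classes on $A$ itself, such as the one corresponding to $(\det W)_{(2,2)}\in\bigwedge^2W_+\otimes\bigwedge^2W_-$. So your hypothesis does not directly reach all exceptional classes, and the statement ``every Hodge class on $A^k$ is a combination of $(1,1)$-classes and realizations of the Weil classes'' is false as written.

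The missing ingredient --- and the real content of the ``modifications'' alluded to in the paper --- is the action of the quaternion algebra $D$ by algebraic correspondences: $D\otimes_\Q\C\cong M_2(\C)$ acts on the $\C^2$-factor of $H^1(A,\C)$, and $\mathrm{Sym}^{2n}(\C^2)\otimes\det W$ is an irreducible $(D\otimes_\Q\C)^\times$-module in which the $K$-Weil classes are nonzero vectors; since $D^\times$ is Zariski-dense in $(D\otimes_\Q\C)^\times$ and acts by algebraic cycles, algebraicity of the $K$-Weil classes propagates to the whole $(2n+1)$-dimensional space, and likewise factor-by-factor on $A^k$ to all mixed realizations of $\det W$. (Equivalently, the extra classes are Weil classes for the other imaginary quadratic subfields of $D$.) Once this is added, the remainder of your argument --- the identity (\ref{Eq.: relating different determinants}) and Lemma \ref{Lem.: realizations of exceptional classes are related} --- does carry over verbatim, as you say.
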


Theorem \ref{Thm.: H.c. for powers of general ab var of weil type} and Theorem \ref{Thm.: Hodge conjecture abelian variety of Weil type with definite quaternionic multiplication} allow us to deduce the Hodge conjecture for all powers $A^k$, where $A$ is a general abelian variety of Weil type or a general abelian variety of Weil type with definite quaternionic multiplication whenever the Weil classes on $A$ are algebraic.

\section{Families of K3 surfaces of general Picard number 16}
\label{Sec. Families of K3 of Pic 16}
In this section, we prove the Hodge conjecture for all powers of the K3 surfaces belonging to families whose general element has Picard number $16$ assuming the Kuga--Satake Hodge conjecture.

\smallskip

The relation between K3 surfaces of Picard number $16$ and abelian fourfolds of Weil type is given by the Kuga--Satake construction due to the following result:

\begin{thm}\cite{lombardo01kugasatake},\cite[Thm.\ 9.2]{van2000kuga}
\label{thm: Lombardo KS and rank 6}
Let $(V,q)$ be a polarized rational Hodge structure of K3-type such that there exists an isomorphism of quadratic spaces 
\[
(V,q)\simeq U_\Q^2 \oplus \langle a \rangle \oplus \langle b \rangle,
\]
where $a$ and $b$ are negative integers, and $U$ is the hyperbolic lattice. Then, the Kuga--Satake variety of $(V,q)$ is isogenous to $A^4$ for an abelian fourfold of $\Q(\sqrt{-ab})$-Weil type with discriminant one. Moreover, for general $(V,q)$, there is an isomorphism $\mathrm{End}(A)\otimes_\mathbb{Z}\Q\simeq \Q(\sqrt{-ab})$.
Conversely, if $A$ is an abelian fourfold of Weil type with discriminant one, then $A^4$ is the Kuga--Satake variety of a polarized rational Hodge structure of K3-type of dimension six as above.
\end{thm}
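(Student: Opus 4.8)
The plan is to read off the structure of the Kuga--Satake variety directly from the even Clifford algebra $\mathrm{Cl}^+(V)$. First I would record the numerical data of the quadratic space: since $U_\Q^2$ has signature $(2,2)$ and $\langle a\rangle\oplus\langle b\rangle$ is negative definite, $(V,q)$ has rank six and signature $(2,4)$, which is exactly the signature forced by the K3 type condition $\dim V^{2,0}=1$. Because $\dim V=6$ is even, $\mathrm{Cl}^+(V)$ is central simple of dimension $16$ over its centre $Z$, and $Z=\Q(\sqrt{\mathrm{disc}})$ with $\mathrm{disc}=(-1)^{6\cdot 5/2}\det q$. As $\det(U_\Q^2)=1$ and $\det(\langle a\rangle\oplus\langle b\rangle)=ab$, one gets $\mathrm{disc}=-ab$, so the centre is $Z=\Q(\sqrt{-ab})=K$; note that $ab>0$ makes $K$ imaginary quadratic, matching the field appearing in the Weil type.

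Next I would show that $\mathrm{Cl}^+(V)$ is \emph{split} over $K$, i.e.\ $\mathrm{Cl}^+(V)\cong M_4(K)$. The two orthogonal hyperbolic planes are the key: writing $V=U_\Q^2\perp W$ with $W=\langle a\rangle\oplus\langle b\rangle$ and using the graded tensor decomposition $\mathrm{Cl}(V)\cong\mathrm{Cl}(U_\Q^2)\,\widehat\otimes\,\mathrm{Cl}(W)$ together with $\mathrm{Cl}(U_\Q)\cong M_2(\Q)$ and $\mathrm{Cl}^+(W)\cong K$ (the even part of the quaternion algebra $(a,b)_\Q$, generated by $e_1e_2$ with $(e_1e_2)^2=-ab$), one tracks the grading to obtain $\mathrm{Cl}^+(V)\cong M_4(K)$. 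Granting this, the Kuga--Satake construction (Theorem \ref{thm: K-S construction}) gives $H^1(A,\Q)\cong \mathrm{Cl}^+(V)$, so $\dim A=16$, and right multiplication embeds $\mathrm{Cl}^+(V)^{op}\cong M_4(K)$ into $\mathrm{End}(A)\otimes_\mathbb{Z}\Q$ commuting with the Hodge structure. By Morita theory the unique simple $M_4(K)$-module is $K^4$, of $\Q$-dimension $8$, so $A$ is isogenous to $B^4$ with $\dim B=4$ and $K\subseteq \mathrm{End}(B)\otimes_\mathbb{Z}\Q$.

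It then remains to certify that $B$ is of $K$-Weil type with discriminant one. For the Weil type condition I would compute the action of $\sqrt{-ab}$ on the tangent space $H^{0,1}(B)$: the Hodge decomposition of $\mathrm{Cl}^+(V)_\C$ induced by the weight-two structure, together with the splitting $K\otimes_\Q\C\cong\C\times\C$ coming from the two embeddings of $K$, shows that the eigenvalues $\pm\sqrt{-ab}$ each occur with multiplicity $n=2$, which is exactly the defining condition of Section \ref{Sec. Ab. of Weil type}. The discriminant is the class in $\Q^*/\mathrm{N}(K)$ of the $K$-Hermitian form $\tilde H$ on $H^1(B,\Q)$, and I would show it is trivial by exhibiting an isotropic $K$-subspace descending from the two hyperbolic planes of $V$, forcing $\tilde H$ to be split and hence its discriminant a norm. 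Finally, for general $(V,q)$ the Hodge group is maximal, so by Proposition \ref{Prop.: Hodge group of a general abelian variety of Weyl type} there are no extra endomorphisms and $\mathrm{End}(B)\otimes_\mathbb{Z}\Q=K$.

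For the converse, given an abelian fourfold $B$ of $K$-Weil type with discriminant one I would reverse the construction: the six-dimensional weight-two Hodge structure $(V,q)$ is recovered from a tensor construction on $H^1(B,\Q)$ equipped with its natural form, and the discriminant-one hypothesis is precisely what guarantees $q\cong U_\Q^2\oplus\langle a\rangle\oplus\langle b\rangle$; one then checks that its Kuga--Satake variety reproduces $B^4$ by re-running the Clifford computation above. The main obstacle is the pair of intertwined facts that $\mathrm{Cl}^+(V)$ is split over $K$ and that $B$ has discriminant one: both amount to controlling the Brauer class (Hasse invariant) of the even Clifford algebra, and it is exactly the presence of the full $U_\Q^2$ that trivialises this class. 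Matching this splitting with the discriminant-one normalisation, in both directions of the equivalence, is where the real work lies.
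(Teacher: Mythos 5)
The paper does not prove this theorem: it is imported verbatim from Lombardo and from van Geemen \cite[Thm.\ 9.2]{van2000kuga}, so there is no internal proof to compare against. Measured against those sources, your route is the standard one, and its core computation is correct and essentially complete: the signature count, the identification of the centre of $\mathrm{Cl}^+(V)$ as $\Q(\sqrt{-ab})$ via the signed discriminant $(-1)^{15}\det q=-ab$, the splitting $\mathrm{Cl}^+(V)\cong M_4(K)$ (the two hyperbolic planes reduce the Brauer class to that of $(a,b)_\Q$, and $(a,b)\cong(a,-ab)$ is split over $K=\Q(\sqrt{-ab})$ because $-ab$ becomes a square there), and the Morita argument giving $\mathrm{KS}(V)\sim B^4$ with $\dim B=4$ and $K\subseteq\mathrm{End}(B)\otimes_{\mathbb{Z}}\Q$. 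This is exactly how Lombardo proceeds.

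The gaps are in the two places you yourself flag, and they are not cosmetic. First, the Weil-type condition (eigenvalues $\pm\sqrt{-ab}$ each of multiplicity $2$ on the tangent space) and the discriminant-one statement are asserted as strategies, not carried out: for the discriminant you would need to produce an explicit $2$-dimensional totally isotropic $K$-subspace of $(H^1(B,\Q),\tilde H)$ descending from $U_\Q^2$, and to know that the Hermitian form has signature $(2,2)$ so that, by Landherr's classification, isotropy of that size is equivalent to discriminant one; none of this is exhibited, and it is precisely the content of Lombardo's computation. Second, the converse direction is only a one-sentence gesture: the actual construction recovers $(V,q)$ inside $\bigwedge^2_K H^1(B,\Q)$ (compare the decomposition $\bigwedge^2_K H^1(A,\Q)\simeq T(X)\oplus\phi(T(X))$ used in Proposition \ref{prop. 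KSH implies H conj for ab 4}), and one must verify both that the induced quadratic form is $U_\Q^2\oplus\langle a\rangle\oplus\langle b\rangle$ and that the Kuga--Satake functor returns $B^4$; saying that "the discriminant-one hypothesis is precisely what guarantees" this is a statement of the theorem, not a proof of it. As a reconstruction of the cited argument your outline is faithful and the Clifford-algebra half is sound, but the discriminant computation and the converse would both need to be written out before this could stand as a proof.
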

By Theorem \ref{Thm.: Markman}, the Hodge conjecture holds for the general abelian fourfold of Weil type with discriminant one. The following proposition shows that the same result follows in the cases where Kuga--Satake correspondence is known to be algebraic. As mentioned in the introduction, this does not prove the Hodge conjecture for any new abelian variety, it just shows the strong relation between the algebraicity Kuga--Satake correspondence and the Hodge conjecture for the Kuga--Satake variety in this special case.
\begin{prop}
\label{prop. KSH implies H conj for ab 4}
Let $A$ be a general abelian fourfold of $K$-Weil type with discriminant one. If the Kuga--Satake Hodge conjecture holds for the corresponding K3 surface, then the Weil classes on $A$ are algebraic. Thus, the Hodge conjecture holds for $A$ and, hence, for all powers $A^k$.
\begin{proof}
Let $X$ be a K3 surface such that $\mathrm{KS}(X)\sim A^4$ which exists by Theorem \ref{thm: Lombardo KS and rank 6}. By \cite[Thm.\ 3.8]{lombardo01kugasatake}, we have the following isomorphism of Hodge structures:
\begin{equation}
\label{eq.: T + F(T)}
\textstyle
\bigwedge^2_K H^1(A,\Q)\simeq T(X)\oplus \phi(T(X)),
\end{equation}
where $\phi$ denotes the natural action of $\sqrt{-d}$ on $\bigwedge^2_K H^1(A,\Q)$ sending $v\wedge_K w\longmapsto \sqrt{-d}v\wedge_K w$. 
In particular, there is an embedding of Hodge structures
\[
\textstyle\kappa\colon T(X)\longhookrightarrow\bigwedge^2_K H^1(A,\Q)\longhookrightarrow \bigwedge^2 H^1(A,\Q).
\]
By generality assumption, the K3 surface $X$ is Mumford--Tate general. In particular, \cite[Thm.~4.8]{charles2022two} and its proof show that $\kappa$ is induced by the Kuga--Satake correspondence. As we are assuming the Kuga--Satake Hodge conjecture for $X$ we deduce that $\kappa$ is algebraic.
Tensoring (\ref{eq.: T + F(T)}) by $\C$, we get as in \cite[Prop.\ 4.4]{deligne1982hodge} the following isomorphism
\begin{equation}
\label{eq: two ways of computing the same thing}
\textstyle
\bigwedge^2 W\oplus \bigwedge^2 W^*\simeq\left(\bigwedge^2_K H^1(A,\Q)\right)\otimes_\Q \C\simeq T(X)_\C\oplus \phi(T(X))_\C,
\end{equation}
where, as usual, $W$ is a complex vector space of dimension $2n=4$ such that $\mathrm{Hdg}(A)(\C)\simeq\mathrm{SL}(W,\C).$
Using the left-hand side of (\ref{eq: two ways of computing the same thing}), we compute the ring of Hodge classes in
\[
\textstyle S\coloneqq\left(\bigwedge^2_K H^1(A,\Q)\right)_\C\otimes \left(\bigwedge^2_K H^1(A,\Q)\right)_\C.
\]
We see that it is four-dimensional and is spanned by two linear combinations of complete contractions together with two realizations on $A^2$ of the Weil classes. On the other hand, computing the same ring of Hodge classes using the right-hand side of (\ref{eq: two ways of computing the same thing}), we conclude for dimension reasons that there is a unique Hodge class in $T(X)\otimes T(X)$, i.e., that the endomorphism field of $X$ is $\Q$. Denoting by $\alpha$ this unique Hodge class, we see that the ring of Hodge classes in $S$ is spanned by 
\[\alpha, (\mathrm{Id}\otimes \phi) \alpha , (\phi\otimes\mathrm{Id})\alpha, \text{ and } (\phi\otimes \phi)\alpha.\] 
Since we are assuming the Kuga--Satake Hodge conjecture, the class $\alpha$ is algebraic on $A^2$ as it is represented on $X^2$ by a component of the class of the diagonal.
Moreover, note that $\phi$ is algebraic as it is the restriction to $\bigwedge^2_K H^1(A,\Q)$ of the algebraic morphism \[\sqrt{-d}\otimes \mathrm{Id}\colon H^1(A,\Q)^{\otimes 2}\longrightarrow H^1(A,\Q)^{\otimes 2}.\]
We then conclude that every class in $S$ is algebraic. In particular, we see that the realizations of the Weil classes in $S$ are algebraic. Using a similar argument to the one of Lemma \ref{Lem.: realizations of exceptional classes are related}, we conclude that the Weil classes on $A$ are algebraic. This implies by Theorem \ref{Thm.: H.c. for powers of general ab var of weil type} the Hodge conjecture for all powers $A^k$.
\end{proof}
\end{prop}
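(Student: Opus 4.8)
The plan is to transport the algebraicity of the Kuga--Satake correspondence for $X$ onto the Weil classes of $A$ by means of Lombardo's identification of $\bigwedge^2_K H^1(A,\Q)$ with transcendental data of $X$, and then to invoke the results on powers of abelian varieties of Weil type. First I would fix a K3 surface $X$ of Picard number $16$ with $\mathrm{KS}(X)\sim A^4$ as in Theorem~\ref{thm: Lombardo KS and rank 6}, so that $A$ is a general abelian fourfold of $K$-Weil type with $K=\Q(\sqrt{-d})$ and $\mathrm{End}(A)\otimes_\mathbb{Z}\Q\simeq K$. By \cite[Thm.\ 3.8]{lombardo01kugasatake} there is an isomorphism of Hodge structures $\bigwedge^2_K H^1(A,\Q)\simeq T(X)\oplus\phi(T(X))$, where $\phi$ is the endomorphism induced by the action of $\sqrt{-d}$; crucially $\phi$ is algebraic, being induced by an element of $\mathrm{End}(A)\otimes_\mathbb{Z}\Q$. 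Tensoring with $\C$ and using \cite[Prop.\ 4.4]{deligne1982hodge} yields $\bigwedge^2 W\oplus\bigwedge^2 W^*\simeq T(X)_\C\oplus\phi(T(X))_\C$, with $W$ a four-dimensional space such that $\mathrm{Hdg}(A)(\C)\simeq\mathrm{SL}(W,\C)$.

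The heart of the argument is a two-sided computation of the space of Hodge classes in
\[
S\coloneqq\left(\textstyle\bigwedge^2_K H^1(A,\Q)\right)_\C\otimes\left(\textstyle\bigwedge^2_K H^1(A,\Q)\right)_\C,
\]
which lives on $A^2$. On one side, using the left-hand identification and the invariant theory of $\mathrm{SL}(W,\C)$ from Corollary~\ref{Cor: invariants under SL extended version}, I would compute that the space of Hodge classes in $S$ is four-dimensional, spanned by two linear combinations of complete contractions together with two realizations on $A^2$ of the Weil classes (these arising from the copies of $\bigwedge^4 W$ and $\bigwedge^4 W^*$ inside $\bigwedge^2 W\otimes\bigwedge^2 W$ and $\bigwedge^2 W^*\otimes\bigwedge^2 W^*$). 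On the other side, using the right-hand identification, the comparison of dimensions forces $\mathrm{End}_{\mathrm{Hdg}(X)}(T(X))\simeq\Q$, so that there is a unique Hodge class $\alpha$ in $T(X)\otimes T(X)$, and the same four-dimensional space is spanned by $\alpha$, $(\mathrm{Id}\otimes\phi)\alpha$, $(\phi\otimes\mathrm{Id})\alpha$, and $(\phi\otimes\phi)\alpha$.

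Now I would feed in the algebraicity hypothesis. Under the Kuga--Satake Hodge conjecture the class $\alpha$ is algebraic on $A^2$, since on $X^2$ it is a component of the class of the diagonal and the correspondence carries it to $A^2$. As $\phi$ is algebraic, all four spanning classes are algebraic, hence the entire four-dimensional space of Hodge classes in $S$ is algebraic; in particular the two realizations of the Weil classes on $A^2$ are algebraic. The final step is a descent from $A^2$ to $A$: arguing as in Lemma~\ref{Lem.: realizations of exceptional classes are related}, a realization of a Weil class on $A^2$ is related by pullback along an algebraic (diagonal-type) map to the Weil class on $A$ itself, so algebraicity of the realization forces algebraicity of the Weil classes on $A$. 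Since the Hodge ring of $A$ is generated by the divisor class and the Weil classes (cf.\ Remark~\ref{rmk.: Hodge classes abelian var of weil type}), this yields the Hodge conjecture for $A$, and then Theorem~\ref{Thm.: H.c. for powers of general ab var of weil type} upgrades it to all powers $A^k$.

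The main obstacle I expect lies in making the two-sided computation of the Hodge classes in $S$ rigorous and in matching its two descriptions: one must confirm that the realizations of the Weil classes obtained from the $\mathrm{SL}(W,\C)$-side genuinely lie in the four-dimensional space spanned on the $T(X)$-side by $\alpha$ and its $\phi$-translates, so that the algebraicity of $\alpha$ (together with that of $\phi$) really propagates to the Weil-class realizations rather than only to the complete contractions. Once this identification and the dimension count are in place, the remaining input—the algebraicity of $\phi$, the representation of $\alpha$ by the diagonal, and the diagonal-pullback descent of Lemma~\ref{Lem.: realizations of exceptional classes are related}—is routine given the earlier results.
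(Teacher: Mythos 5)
Your proposal follows the paper's own proof essentially verbatim: the same two-sided computation of the Hodge classes in $S$ via Lombardo's isomorphism, the same dimension count forcing $\mathrm{End}_{\mathrm{Hdg}(X)}(T(X))\simeq\Q$, the same use of the algebraicity of $\alpha$ and $\phi$, and the same descent to $A$ via Lemma \ref{Lem.: realizations of exceptional classes are related} followed by Theorem \ref{Thm.: H.c. for powers of general ab var of weil type}. It is correct and takes the same approach.
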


We are finally able to state and prove our main theorem which extends \cite[Thm.\ 2]{schlickewei2010hodge}.
\begin{thm}
\label{main thm}
Let $\mathscr{X}\longrightarrow S$ be a four-dimensional family of K3 surfaces whose general fibre is of Picard number $16$ with an isometry
\[
T(\mathscr{X}_s)\simeq U_\Q^2\oplus \langle a\rangle\oplus\langle b\rangle,
\]
for some negative integers $a$ and $b$. If the Kuga--Satake correspondence is algebraic for the fibres of this family, then the Hodge conjecture holds for all powers of every K3 surface in this family.
\begin{proof}
Recall that, if a K3 surface $X$ has totally real endomorphism field $E$, the dimension of $T(X)$ as an $E$-vector space is at least three. Using this observation, together with the assumption that the general K3 surface of this family has Picard number $16$, we see that,
for any $s\in S$ the pair $(\dim T(\X_s),E\coloneqq \mathrm{End}_{\mathrm{Hdg}(\X_s)}(T(\X_s))$ satisfies one of the following:
\begin{itemize}
    \item[(i)] $\dim T(\X_s)=6, 4,$ or $2$ and $E$ is a CM field;
    \item[(ii)] $\dim T(\X_s)=6$ and $E=\Q$;
    \item[(iii)] $\dim T(\X_s)=6$ and $E$ is a totally real field of degree two;
    \item[(iv)] $\dim T(\X_s)\leq 5$ and $E\simeq \Q$.
\end{itemize}

In case (i), the endomorphism field of $T(\X_s)$ is a CM field. Therefore, we may apply Corollary \ref{Cor.: H.c. for K3 of CM type} to deduce the Hodge conjecture for all powers of $\X_s$.

\smallskip

In case (ii), the transcendental lattice is six-dimensional and by Theorem \ref{thm: Lombardo KS and rank 6}, the Kuga--Satake variety of $\X_s$ is the fourth power of a general abelian fourfold $A$ of Weil type. By Theorem \ref{Thm.: Markman} and Theorem \ref{Thm.: H.c. for powers of general ab var of weil type}, the Weil classes on $A$ are algebraic and the Hodge conjecture holds for all powers of $A$. Using Lemma \ref{Lem.: Kuga-Satake corresponedence and Hodge conjecture}, we conclude that all Hodge classes in the tensor algebra of $T(\X_s)$ are algebraic on the powers of $\X_s$.

\smallskip

In case (iii), one knows that the abelian fourfold $A$ appearing in the decomposition of the Kuga--Satake variety of $\X_s$ is an abelian fourfold of Weil type whose endomorphism algebra is of definite quaternion type, see \cite[Prop.\ 5.7]{van2000kuga}. By Theorem \ref{Thm.: H.c. for powers of general ab var of weil type} together with Theorem \ref{Thm.: Markman}, we see as in the previous case that the Hodge conjecture holds for all powers of the general $A$. As before, this is sufficient to conclude that all Hodge classes on the tensor algebra of $T(\X_s)$ are algebraic.

\smallskip

Finally in case (iv), the endomorphism field of $\X_s$ is $\Q$. Since we have already proven the Hodge conjecture for all powers of the K3 surfaces of Picard number $16$ belonging to this family, we may apply our degeneration result of Corollary \ref{Cor.: when the determinant is the only class to study} to conclude that the Hodge conjecture holds for all powers of the K3 surface $\X_s$.

\smallskip

This concludes the proof.
\end{proof}
\end{thm}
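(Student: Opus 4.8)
The plan is to reduce, by Lemma \ref{Lem.: Hodge conjecture and Transcendental lattice}, to proving that for every $s \in S$ all Hodge classes in $\bigotimes^\bullet T(\X_s)$ are algebraic, and then to stratify $S$ according to the pair $\bigl(\dim_\Q T(\X_s),\, E\bigr)$, where $E \coloneqq \mathrm{End}_{\mathrm{Hdg}(\X_s)}(T(\X_s))$. Since the general fibre has Picard number $16$, its transcendental lattice has rank $6$, and under specialisation the Picard number only grows, so $\dim_\Q T(\X_s) \le 6$ for all $s$. Recalling that $E$ is a field, either totally real or CM, that $[E:\Q]$ divides $\dim_\Q T(\X_s)$, that a CM field has even degree, and that a totally real endomorphism field of a K3 surface forces $\dim_E T(\X_s) \ge 3$ by Zarhin \cite{zarhin1983hodge}, one checks that the only possibilities are: (i) $E$ a CM field, with $\dim_\Q T \in \{2,4,6\}$; (ii) $\dim_\Q T = 6$ and $E = \Q$; (iii) $\dim_\Q T = 6$ and $E$ totally real of degree two; and (iv) $\dim_\Q T \le 5$ and $E = \Q$. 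The CM case (i) is immediate from Corollary \ref{Cor.: H.c. for K3 of CM type}, so the substance lies in cases (ii)--(iv).

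For the rank-six cases (ii) and (iii) I would pass to the Kuga--Satake variety. By Theorem \ref{thm: Lombardo KS and rank 6}, $\mathrm{KS}(\X_s)$ is isogenous to $A^4$ for an abelian fourfold $A$ of Weil type with discriminant one; in case (ii) $A$ is general with $\mathrm{End}(A)\otimes\Q = \Q(\sqrt{-ab})$, while in case (iii) $A$ carries definite quaternionic multiplication by \cite[Prop.\ 5.7]{van2000kuga}. In both situations Markman's Theorem \ref{Thm.: Markman} yields the algebraicity of the Weil classes on $A$, whence Theorem \ref{Thm.: H.c. for powers of general ab var of weil type} (respectively Theorem \ref{Thm.: Hodge conjecture abelian variety of Weil type with definite quaternionic multiplication}) gives the Hodge conjecture for every power $A^k$, and hence for every power of $\mathrm{KS}(\X_s)$, these being powers of $A$. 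Since the Kuga--Satake correspondence is algebraic by hypothesis, Lemma \ref{Lem.: Kuga-Satake corresponedence and Hodge conjecture} transports this back: every Hodge class in $\bigotimes^\bullet T(\X_s)$ is algebraic, which settles (ii) and (iii).

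Cases (i)--(iii) together establish the Hodge conjecture for all powers of every Picard-number-$16$ fibre, in particular for the general fibre. For the rank-drop case (iv) I would then specialise. Since $\X_s$ has $E = \Q$, Theorem \ref{generators in the case of totally real K3} shows its algebra of Hodge classes is generated by the degree-two class coming from the diagonal, which is automatically algebraic, together with the single exceptional class $\det T(\X_s)$. The latter is algebraic by Proposition \ref{Prop.: determinant is algebraic is a closed property in families}, which guarantees that algebraicity of the determinant is a closed condition even when the rank drops. Packaging this, Corollary \ref{Cor.: when the determinant is the only class to study} applies directly and gives the Hodge conjecture for all powers of $\X_s$, completing the case analysis.

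The step I expect to be the main obstacle is the rank-drop case (iv), and more precisely the specialisation of the determinant. The naive hope that the determinant of the generic transcendental lattice $\widetilde{T}$ limits to $\det T(\X_s)$ fails once $\dim_\Q T(\X_s) < 6$, so one cannot simply take a flat limit of algebraic cycles; the genuine input is the contraction argument of Proposition \ref{Prop.: determinant is algebraic is a closed property in families}, pairing $\det \widetilde{T}$ against a basis of algebraic $(1,1)$-classes spanning the quotient $\widetilde{T}/T(\X_s)$. A secondary subtlety, logically prior to everything, is verifying the exhaustiveness of the four-case classification---in particular that no fibre of rank $\le 5$ can acquire a totally real endomorphism field of degree $\ge 2$. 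This is exactly what the bound $\dim_E T \ge 3$ rules out, and it is what makes $E = \Q$ the only non-CM option once the rank has dropped.
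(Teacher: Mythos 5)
Your proof is correct and follows essentially the same route as the paper: the identical four-case classification by $(\dim_\Q T(\X_s), E)$, the CM corollary for (i), the Kuga--Satake/Markman/Weil-type argument for (ii) and (iii), and the determinant-specialisation argument for (iv). In fact your citations are slightly more accurate than the paper's own: in case (iii) you invoke Theorem \ref{Thm.: Hodge conjecture abelian variety of Weil type with definite quaternionic multiplication} rather than the non-quaternionic version, and in case (iv) you correctly point to Corollary \ref{Cor.: when the determinant is the only class to study} where the paper's text mistakenly references Corollary \ref{Cor: invariants under SL extended version}.
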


We end this paper recalling two four-dimensional families of K3 surfaces satisfying the hypothesis of Theorem \ref{main thm}.

\begin{exmp}[\textit{Double covers of $\mathbb{P}^2$ branched along six lines}]
This family of K3 surfaces has been first studied by Paranjape \cite{paranjape} and it is the example studied by Schlickewei \cite{schlickewei2010hodge}: 
Let $\pi\colon Y \longrightarrow \p^2$ be a double cover branched along six lines no three of which intersect in one point. The surface $Y$ has simple nodes in the $15$ points of intersection of the lines. Blowing up these $15$ points on $Y$ we get a smooth K3 surface $X$. The $15$ exceptional lines on $X$ together with pullback of the ample line on $\p^2$ span a sublattice of $\mathrm{NS}(X)$ of rank $16$. Since this family of K3 surfaces is four-dimensional, the Picard number of a general member is $16$. The transcendental lattice of the general element of this family has been computed in \cite[Lem.\ 1]{paranjape}, where it is shown that it is isomorphic to 
$U_\Q^2\oplus\langle-2\rangle^2$.
In particular, by Theorem \ref{thm: Lombardo KS and rank 6} the Kuga--Satake variety is the fourth power of an abelian fourfold of $\Q(i)$-Weil type.
This was already known to Paranjape \cite{paranjape} where the author constructs the Kuga--Satake correspondence for this family, covering the general K3 surface by the square of a curve of genus five. Our Theorem \ref{main thm} then extends the result in \cite{schlickewei2010hodge}, since it allows us to conclude that the Hodge conjecture holds for all powers of the K3 surfaces in this family and not just for their square.
\end{exmp}

\begin{exmp}[\textit{Desingularization of K3 surfaces in $\p^4$ with $15$ simple nodes}]
This family of K3 surfaces has been first introduced in \cite{garbagnati2016kummer}. In \cite{ingalls2022explicit}, the authors show that the same techniques as in \cite{schlickewei2010hodge} can be used to prove the Hodge conjecture for the square of these K3 surfaces:
Let $X$ be a general K3 surface which is the desingularization of a singular K3 surface in $\p^4$ with $15$ nodal points. Then, the $15$ rational lines on $X$ together with the pullback of the ample line bundle on $\p^4$ span a sublattice of $\mathrm{NS}(X)$ of rank $16$. Therefore, since the family of such K3 surfaces is four-dimensional, the Picard number of $X$ is equal to $16$.
In \cite[Rmk.\ 4.8]{ingalls2022explicit}, using elliptic fibrations, it is shown that the transcendental lattice of a general K3 surface is isomorphic to
$U^{\oplus 2}_\Q\oplus\langle -6\rangle\oplus\langle -2\rangle.$
In particular, applying Theorem \ref{thm: Lombardo KS and rank 6}, the Kuga--Satake variety of $X$ is isogenous to $A^4$ where $A$ is an abelian fourfold of $\Q(\sqrt{-3})$-Weil type. Inspired by \cite{paranjape}, Ingalls, Logan, and Patashnick \cite{ingalls2022explicit} show that the Kuga--Satake correspondence is algebraic for these K3 surfaces. The authors then show that the proof of Schlickewei applies which shows the Hodge conjecture for the square of these K3 surfaces. As in the previous example, by Theorem \ref{main thm}, we conclude that the Hodge conjecture holds for all powers of the K3 surfaces in this family.
\end{exmp}

\bibliography{main}
\bibliographystyle{plain}
\end{document}